\newtheorem{theorem}{Theorem}[section]
\newtheorem{lemma}[theorem]{Lemma}
\theoremstyle{definition}
\newtheorem{remark}[theorem]{Remark}
\def\XXint#1#2#3{{\setbox0=\hbox{$#1{#2#3}{\int}$}
         \vcenter{\hbox{$#2#3$}}\kern-.5\wd0}}
\numberwithin{equation}{section}
\begin{document}

\title{Blowup dynamics for mass critical Half-wave equation in 2D }
\author{ Vladimir Georgiev and Yuan  Li}
\date{}
\maketitle

\begin{abstract}
We consider the half-wave equation $iu_t=Du-|u|u$ in two dimensions. For the initial data $u_0(x)\in H^{s}(\mathbb{R}^2)$, $s\in\left(\frac{3}{4},1\right)$, we obtain the non-radial ground state mass blow-up solutions with the blow-up speed $\|D^{\frac{1}{2}}u(t)\|_{L^2}\sim\frac{1}{|t|}$ as $t\to 0^-$. Our result  extends  a recent work of [ Georgiev and Li, Comm. Partial Differential Equations 47 (2022), no. 1, 39–88.], where the radial ground state mass blow up solutions were  constructed.

\noindent \textbf{Keywords:} Half-wave equation; Mass critical; Ground state mass;  Blowup solution
\end{abstract}

\section{Introduction and Main Result}
\noindent
In this paper, we consider the half-wave (first-order wave) equation in two dimensions
\begin{equation}\label{equ-1-hf-2}
\begin{cases}
i\partial_tu=Du-|u|u,\\
u(t_0,x)=u_0(x),\ u:I\times\mathbb{R}\rightarrow\mathbb{C}.
\end{cases}
\end{equation}
Here, $I\subset\mathbb{R}$ is an interval containing the initial time $t_0\in\mathbb{R}$, and
\begin{align*}
\widehat{(Df)}(\xi)=|\xi|\hat{f}(\xi)
\end{align*}
denotes the first-order nonlocal fractional derivative.  Let us review some basic properties of this equation. The Cauchy problem
\eqref{equ-1-hf-2} is an infinite-dimensional Hamiltonian system, which have the following three conservation laws:
\begin{align*}
\text{Mass:}\ \ M(u)&=\int_{\mathbb{R}^2}|u(t,x)|^2dx=M(u_0),\\
\text{Energy:}\ \ E(u)&=\frac{1}{2}\int_{\mathbb{R}^2}\bar{u}(t,x)Du(t,x)dx
-\frac{1}{3}\int_{\mathbb{R}^2}|u(t,x)|^{3}dx=E(u_0),\\
\text{Momentum:}\ \ P(u)&=\int_{\mathbb{R}^2}-i\nabla u(t,x)\bar{u}(t,x)dx=P(u_0).
\end{align*}
The equation \eqref{equ-1-hf-2} possesses a large group of symmetries: If $u(t,x)$ is a solution to \eqref{equ-1-hf-2}, then for all $(t_0,x_0,\gamma_0,\lambda_0)\in\mathbb{R}\times\mathbb{R}^2\times\mathbb{R}\times\mathbb{R}_*^+$, so is
\[u(t,x)\mapsto \lambda_0u(\lambda_0t-t_0,\lambda_0x-x_0)e^{i\gamma_0}.\]
From the scaling invariant, it is known that equation \eqref{equ-1-hf-2} is mass critical.

This equation has recently gained much attention. For the general power-type nonlinear terms, \cite{BGV2018MN,HW2019SM,D2018DCDS,FGO2019JMPA,KLR2013ARMA} studied the local/global well-posedness. In particular, for $F(u)=|u|^{p}u$, Krieger, Lenzmann and Rapha\"el \cite{KLR2013ARMA} considered the local well-posedness in one dimensional case; Bellazzini, Georgiev and Visciglia \cite{BGV2018MN} obtained the local existence in the space $H^1_{rad}(\mathbb{R}^N)$, $N\geq2$; furthermore, Hidano and Wang \cite{HW2019SM} improved this result and established the local existence in the space $H^s_{rad}(\mathbb{R}^N)$, 
$s\in(\frac{1}{2},1)$ and $N\geq2$, and in $H^s(\mathbb{R}^N)$, $s\in\left(\max\left\{\frac{N-1}{2},\frac{N+1}{4}\right\},p\right)$ and $s\geq s_c$, where $p>\max\left\{s_c,\frac{N-1}{2},\frac{N+1}{4},1\right\}$, $s_c=\frac{N}{2}-\frac{1}{p}$. For the  blow up solution, Krieger, Lenzmann and Rapha\"el \cite{KLR2013ARMA}  first constructed a minimal mass blow-up solutions to the mass critical Half-wave equation in one dimension, and then the authors \cite{GL2021JFA,GL2022CPDE} obtained the radial ground state mass blow-up solution in the two and three dimension cases, for other blow-up results, one can see \cite{FGO2018DPDE,I2016PAMS,L2022}. In additional, Dinh \cite{D2018DCDS}  established the ill well-posedness and low regularity data.  Now in the present paper, we aim to extend the result in \cite{GL2022CPDE} and study the non-radial case. 

Now we state our main result.
\begin{theorem}\label{Thm1}
For all $(E_0,P_0)\in \mathbb{R}_+^*\times\mathbb{R}^2$, there exist $t^*<0$ independent of $E_0$, $P_0$, and a ground state mass solution $u\in C^0([t^*,0);H^{s}(\mathbb{R}^2))$, $s\in\left(\frac{3}{4},1\right)$, of equation \eqref{equ-1-hf-2} with
\begin{align}\notag
\|u(t)\|_2=\|Q\|_2,\ E(u(t))=E_0,\ P(u(t))=P_0,
\end{align}
which blows up at time $T=0$. More precisely, it holds that
\begin{align*}
u(t,x)-\frac{1}{\lambda(t)}Q\left(\frac{x-\alpha(t)}{\lambda(t)}\right)
e^{i\gamma(t)}\rightarrow0\ \text{in}\ L^2(\mathbb{R}^2)\ \text{as}\ t\rightarrow0^-,
\end{align*}
where
\begin{align}\notag
\lambda(t)=\lambda^*t^2+\mathcal{O}(t^5),\ \alpha(t)=\mathcal{O}(t^3),\ \gamma(t)=\frac{1}{\lambda^*|t|}+\mathcal{O}(t),
\end{align}
with some constant $\lambda^*>0$, and the blowup speed is given by:
\begin{align*}
\|D^{\frac{1}{2}}u(t)\|_2\sim\frac{C(u_0)}{|t|}\ \text{as}\ t\rightarrow0^{-},
\end{align*}
where $C(u_0)>0$ is constant  depending only  on the initial data $u_0$.
\end{theorem}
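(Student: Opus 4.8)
\emph{Proof proposal.} The plan is to run the modulated-ansatz and bootstrap scheme of Merle--Rapha\"el type, in the form developed for the half-wave equation by Krieger--Lenzmann--Rapha\"el \cite{KLR2013ARMA} and by the authors in the radial two- and three-dimensional cases \cite{GL2021JFA,GL2022CPDE}, the genuinely new ingredient being the non-radial modulation: one must now track a translation parameter $\alpha(t)$ and realize the prescribed momentum $P_0$, neither of which appears in \cite{GL2022CPDE}. One looks for a solution of the form
\[
u(t,x)=\frac{1}{\lambda(t)}\bigl(Q_{b(t),\beta(t)}+\varepsilon\bigr)\!\left(t,\frac{x-\alpha(t)}{\lambda(t)}\right)e^{i\gamma(t)},
\]
where $(\lambda,b,\alpha,\beta,\gamma)$ are modulation parameters ($b>0$ governing the blow-up speed, $\beta\in\mathbb{R}^2$ a small velocity encoding the momentum), $Q_{b,\beta}$ is an approximate blow-up profile with $Q_{0,0}=Q$, and $\varepsilon$ is a remainder pinned by a set of orthogonality conditions. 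Working in the rescaled time $s$ with $ds/dt=1/\lambda$ (the half-wave operator $D$ is of order one, so both time and space rescale by $\lambda$) is what links $\lambda(t)\sim\lambda^*t^2$ to $\gamma(t)\sim 1/(\lambda^*|t|)$.

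\textbf{Step 1 (approximate profiles).} Starting from the ground state equation $DQ+Q=Q^{2}$, construct $Q_{b,\beta}$ by solving a finite hierarchy of linear elliptic problems governed by the linearized operators $L_{+}=D+1-2Q$, $L_{-}=D+1-Q$. Because $D$ is nonlocal — for instance $[D,x_j]=-iR_j$, a Riesz transform, so the Galilean relations available for NLS collapse — the corrections decay only polynomially, and one must both keep enough terms so that the generated error $\Psi_{b,\beta}$ is of sufficiently high order in $(b,\beta)$ in the relevant weighted norm, and work in a weighted space tolerating this slow decay. The null and generalized-null directions of $\mathcal L=\mathrm{diag}(L_+,L_-)$ are larger here than in the radial problem: besides the phase direction $iQ$ and the scaling direction $\Lambda Q=Q+y\cdot\nabla Q$ (with $L_+\Lambda Q=-2Q$), one must carry the two translation directions $\partial_{y_j}Q\in\ker L_+$ together with their symplectic partners, and these feed both the profile hierarchy and the orthogonality conditions.

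\textbf{Step 2 (modulation equations and bootstrap).} Imposing orthogonality of $\varepsilon$ to the (now five-dimensional) family — scaling, phase, and the two translation modes — and differentiating in $s$ yields the modulation ODE system, whose leading structure should be $\lambda_s/\lambda\approx-b$, $\gamma_s\approx1$, with $b$ satisfying a Riccati-type equation $b_s\approx -c\,b^2$ ($c>0$) and $\alpha_s,\beta_s$ of strictly lower order; integrating gives $\lambda(t)\sim\lambda^*t^2$, $\gamma(t)\sim 1/(\lambda^*|t|)$, $\alpha(t)=\mathcal O(t^3)$, $\beta(t)=\mathcal O(t^2)$, so that the velocity $\beta$ vanishes while the momentum $\sim\lambda^{-1}\beta=\mathcal O(1)$ stays fixed — this is the mechanism producing $P_0\neq0$ without any Galilean boost. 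Since $s<1$ the $H^1$ energy is unavailable, so one instead propagates a weighted $H^{1/2}$-type norm of $\varepsilon$ adapted to $D$, using (i) coercivity of the linearized energy on the orthogonal complement of the enlarged null space of $\mathcal L$, and (ii) a localized virial/Lyapunov functional whose $s$-derivative is controlled by the modulation parameters and by $\Psi_{b,\beta}$. A bootstrap on an interval $[t^*,0)$ then keeps $\|\varepsilon\|$ and the deviations of $(\lambda,b,\alpha,\beta,\gamma)$ from their predicted values of the postulated size, with $t^*<0$ independent of $(E_0,P_0)$; the rate $\|D^{1/2}u(t)\|_2\sim\lambda^{-1/2}\|D^{1/2}Q\|_2\sim C(u_0)/|t|$ then follows from the scaling.

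\textbf{Step 3 (backward construction and prescribing $(E_0,P_0)$).} Take approximate data $u_n$ at times $t_n\uparrow0$ on the ground-state-mass sphere $\|u_n\|_2=\|Q\|_2$ (which, together with the sign of $b$, is what makes the construction rigid), with the free parameters tuned — via an implicit-function/degree argument exploiting the $\beta$-dependence of $P$ and the $b$-dependence of $E$ — so that $E(u_n)=E_0$ and $P(u_n)=P_0$. Solve \eqref{equ-1-hf-2} backward using the low-regularity local well-posedness of Hidano--Wang, valid in two dimensions for $s>\max\{\tfrac12,\tfrac34\}=\tfrac34$, which is the source of the hypothesis $s\in(\tfrac34,1)$; then, from the uniform a priori bounds of Step 2, extract a limit $u_n\to u\in C^0([t^*,0);H^s)$, and verify that $u$ solves \eqref{equ-1-hf-2}, has the stated $L^2$ profile convergence, and blows up at $T=0$. \textbf{The main obstacle} is the non-radial bookkeeping: enlarging the profile hierarchy and the Lyapunov functional to absorb the extra translation modes of $\mathcal L$, and — the genuinely new point relative to \cite{GL2022CPDE} — prescribing a nonzero $P_0$ in a model with no Galilean invariance, so that $P_0$ must be generated internally through $\beta$ in $Q_{b,\beta}$ and then pinned to its target while $\varepsilon$ stays small in the weak topology and $\alpha(t)$ remains as small as $\mathcal O(t^3)$, which forces delicate cancellations in the modulation system.
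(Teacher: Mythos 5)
Your proposal reproduces the paper's overall architecture faithfully: the modulated ansatz with the extra translation parameter, the boost parameter (your $\beta$, the paper's $b$) generating a nonzero momentum without Galilean invariance, the profile hierarchy built order by order via $L_+=D+1-2Q$ and $L_-=D+1-Q$, a localized energy/virial functional for a weighted $H^{1/2}$-quantity, backward propagation of smallness, and the backward construction along $t_n\uparrow 0$. The momentum law $P(Q_{\mathcal P})\approx p_1 b$ with $b/\lambda\to B_0=P_0/p_1$ is exactly how the paper pins $P_0$, so this part is on target.

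However, there is a genuine gap, and it is precisely the piece the paper's introduction flags as the new ingredient relative to the radial case. The functional you propagate controls $\tilde\epsilon$ only in (weighted) $H^{1/2}$, but local well‑posedness in 2D (Hidano--Wang, which you correctly cite) requires data in $H^s$ with $s>\tfrac34$. In the radial papers \cite{GL2021JFA,GL2022CPDE} the extra regularity $H^{1/2+\theta}$ needed to keep the bootstrap open at the threshold of local well-posedness is obtained by exploiting radial Sobolev-type controls by a weighted $L^q_t H^1_x$ norm; in the non-radial setting that tool is unavailable, and your proposal never says how the $H^{1/2+\theta}$ bound on $\tilde\epsilon$ is recovered. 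In the paper this is a self-contained argument of its own: one uses the $L^q_t L^\infty_x$ Strichartz estimate for $e^{-itD}$ (Klainerman--Machedon / Hidano--Wang), partitions $[t_0,t_n]$ into $N\sim\lambda_n^{-3/2}$ subintervals of length $h\sim\lambda_n^2$, runs a Duhamel/contraction estimate on each short interval in the norm $\|\cdot\|_{L^\infty_t H^s}+\|\cdot\|_{L^q_t L^\infty_x}$ (together with a Leibniz/chain-rule estimate and Lemma \ref{lemma:tec} to handle $\big||Q_n+u|-|Q_n|\big|$), and sums over the intervals to get $\|\tilde\epsilon_n\|_{\dot H^{1/2+\theta}}\lesssim\lambda_n^{1/2-\theta}$ — a bound that cannot come from the $J_A$ functional. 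Without this step, the uniform-in-$n$ $H^s$ control is unjustified and the compactness/limit argument of your Step 3 does not go through.

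Two smaller corrections: the scaling identity is $L_+\Lambda Q=-Q$ (not $-2Q$), as follows from differentiating $\lambda Q(\lambda x)$ in $\lambda$ at $\lambda=1$. And the orthogonality family is seven-dimensional, not five: one condition each against $\Lambda Q_{\mathcal P}$, $\partial_a Q_{\mathcal P}$, $\rho$, and one for each of the two components of $\partial_{b_j}Q_{\mathcal P}$ and $\partial_{y_j}Q_{\mathcal P}$, matching the seven modulation parameters $(a,b_1,b_2,\lambda,\alpha_1,\alpha_2,\gamma)$.
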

Let us make some  comments on the proof of Theorem \ref{Thm1}.

1. We aim to construct an exact solution of the form
\begin{align*}
u(t,x)=\frac{1}{\lambda(t)}[Q_{\mathcal{P}}+\epsilon]
\left(t,\frac{x-\alpha(t)}{\lambda(t)}\right)e^{i\gamma(t)}=\tilde{Q}+\tilde{\epsilon}.
\end{align*}
In case of higher dimensions $ N \geq 2$ the mass critical exponent $1+\frac{2}{N}$ becomes smaller, the modulation estimates, the refine energy estimates and the bootstrap argument (especially the $\|\cdot\|_{H^{1/2+\cdot}}$-norm) become more complicated. In \cite{GL2022CPDE,GL2021JFA}, radial properties play a crucial role in the discussion. Now in the present paper, radial properties are failure and so we need some new techniques to overcome this difficulty.

2. One can control the $\|\cdot\|_{H^s}$ norm  by the weighted $L_t^qH^1_x$-norm in the two dimensional and radial case and this  is  a crucial tool to evaluate some terms in the  refine energy estimates and bootstrap argument. However, in the non-radial case, such a control is impossible, so we need new tools. 

3. Our proof of the existence of ground state blow-up solution in $H^{s}(\mathbb{R}^2)$, $s\in \left(\frac{3}{4},1\right)$ bases on the $L^q_tL^\infty_x$ and $L^\infty_t H^s_x$  Strichartz-type estimates (see Klainerman and Machedon \cite{KM1998IMRN} and Hidano and Wang \cite{HW2019SM}).

This paper is organized as follows: in Section 2, We collect some useful estimates and inequalities; in Section 3, we construct the higher order approximate $Q_{\mathcal{P}}$ solution of the renormalized equation; in Section 4, we decompose the solution and show the estimate the modulation parameters;   energy/virial type estimate and bootstrap argument that will be needed in the construction of the ground state mass blowup solutions; in Section 5, we prove the Theorem \ref{Thm1}; and the finally section is Appendix.

\textbf{Notations}\\
- $(f,g)=\int \bar{f}g$ as the inner product on $L^2(\mathbb{R}^2)$.\\
- $\|\cdot\|_{L^p}$ denotes the $L^p(\mathbb{R}^2)$ norm for $p\geq 1$.\\
- $\widehat{f}$ denotes the Fourier transform of function $f$.\\
- We shall use $X\lesssim Y$ to denote that $X\leq CY$ holds, where the constant $C>0$ may change from line to line, but $C$ is allowed to depend on universally fixed quantities only.\\
- Likewise, we use $X\sim Y$ to denote that both $X\lesssim Y$ and $Y\lesssim X$ hold.

For a sufficiently regular function $f:\mathbb{R}^2\rightarrow\mathbb{C}$, we define the generator of $L^2$ scaling given by
\begin{align}\notag
\Lambda f:=f+x\cdot\nabla f.
\end{align}
Note that the operator $\Lambda$ is skew-adjoint on $L^2(\mathbb{R}^2)$, that is, we have
\begin{align}\notag
(\Lambda f, g)=-(f,\Lambda g).
\end{align}
We write $\Lambda^kf$, with $k\in\mathbb{N}$, for the iterates of $\Lambda$ with the convention that $\Lambda^0f\equiv f$.

In some parts of this paper, it will be convenient to identity any complex-valued function $f:\mathbb{R}^2\rightarrow\mathbb{C}$ with the function $\mathbf{f}:\mathbb{R}^2\rightarrow\mathbb{R}^2$ by setting
\begin{align}\notag
\mathbf{f}={\left[ \begin{array}{c}
f_1\\
f_2
\end{array}
\right ]}={\left[ \begin{array}{c}
\Re f\\
\Im f
\end{array}
\right ]}.
\end{align}
We also define
$$\mathbf{f}\cdot\mathbf{g}=f_1g_1+f_2g_2.$$
Corresponding, we will identity the multiplication by $i$ in $\mathbb{C}$ with the multiplication by the real $2\times 2$-matrix defined as
\begin{align}\notag
J={\left[\begin{array}{cc}
0 & -1\\
1 & 0
\end{array}\right]}.
\end{align}

\section{Preliminaries}
\noindent
In this section, for half-wave equation
\begin{align}\label{equ:2}
    i\partial_tu-Du=G(u),~~u(0,x)=u_0(x),~~(t,x)\in\mathbb{R}\times\mathbb{R}^2.
\end{align}
By Duhamel formula, $u$ is a weak solution of \eqref{equ:2} is equivalent to 
\begin{align*}
    u(t)=U(t)u_0-i\int_0^tU(t-s)G(u(s))ds,
\end{align*}
 where $U(t)=e^{-itD}$. From  \cite{KM1998IMRN,HW2019SM}, we have the following  Strichartz type estimates.
\begin{lemma}\label{lemma:stri} (Strichartz estimates)
Let $q\in(4,\infty)$, $\sigma=1-\frac{1}{q}$. Then  we have the following inequality
\[\|U(t)f\|_{L^qL^\infty}\lesssim \|f\|_{\dot{H}^{\sigma}}.\]
In particular,
\[\|u\|_{\dot{H}^{s}}\lesssim\|u_0\|_{\dot{H}^{s}}+\int_0^t\|G(u)\|_{\dot{H}^{s}},~~s\in[0,1)\]
and
\[\|u\|_{L^qL^\infty}\lesssim\|U(t)u_0\|_{L^qL^\infty}+\|G(u)\|_{L^1\dot{H}^{\sigma}}.\]
\end{lemma}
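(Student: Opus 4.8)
The plan is to derive everything from the single space-time estimate $\|U(t)f\|_{L^qL^\infty}\lesssim\|f\|_{\dot H^\sigma}$ with $\sigma=1-\frac1q$, which is the Klainerman--Machedon / Hidano--Wang estimate for the half-wave propagator $U(t)=e^{-itD}$ in two space dimensions. First I would record that estimate as the fundamental input; it follows from the dispersive decay $\|U(t)\|_{L^1\to L^\infty}\lesssim |t|^{-(d-1)/2}$ for the wave-type propagator (here $d=2$, so decay $|t|^{-1/2}$) combined with a $TT^*$ argument and Sobolev embedding in the angular/frequency variables — but since the excerpt cites \cite{KM1998IMRN,HW2019SM}, I would simply invoke it rather than reprove it. The restriction $q>4$ is exactly what makes the relevant admissibility condition hold in $d=2$.

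Next, for the $\dot H^s$ bound I would apply the Duhamel representation $u(t)=U(t)u_0-i\int_0^tU(t-s)G(u(s))\,ds$, hit both sides with $D^s$, take $L^2$ norms, and use that $U(t)$ is an isometry on every $\dot H^s$ (it is a Fourier multiplier of modulus one). By Minkowski's integral inequality,
\[
\|D^su(t)\|_{L^2}\le\|D^sU(t)u_0\|_{L^2}+\int_0^t\|D^sU(t-s')G(u(s'))\|_{L^2}\,ds'
=\|u_0\|_{\dot H^s}+\int_0^t\|G(u(s'))\|_{\dot H^s}\,ds',
\]
which is the second displayed inequality; the hypothesis $s\in[0,1)$ is only needed so that the fractional derivative and the later nonlinear estimates make sense. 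For the third inequality I would again start from Duhamel, apply $\|\cdot\|_{L^qL^\infty}$, use the triangle inequality to split off the linear term $\|U(t)u_0\|_{L^qL^\infty}$, and estimate the Duhamel term by first bringing the norm inside the time integral (Minkowski) and then applying the fundamental estimate to each $U(t-s')G(u(s'))$ with exponent $\sigma$: this gives $\int_0^t\|U(t-s')G(u(s'))\|_{L^qL^\infty}\,ds'\lesssim\int_0^t\|G(u(s'))\|_{\dot H^\sigma}\,ds'=\|G(u)\|_{L^1\dot H^\sigma}$, which is precisely the claimed bound.

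There is essentially no hard step here once the fundamental Strichartz estimate is granted; the lemma is a packaging of that estimate together with Duhamel and Minkowski. The only point requiring a little care is the use of the $L^qL^\infty$ estimate in its non-homogeneous (retarded) form $\|\int_0^tU(t-s')F(s')\,ds'\|_{L^qL^\infty}\lesssim\|F\|_{L^1\dot H^\sigma}$ — strictly this is the Christ--Kiselev lemma applied to the homogeneous estimate, valid because $q>4>1$ so the time exponents are ordered correctly; I would mention this but not belabor it. I would also note that the mixed-norm notation $L^qL^\infty$ abbreviates $L^q_tL^\infty_x$ over the relevant time interval, and that all implicit constants depend only on $q$ (equivalently on $\sigma$), consistent with the paper's convention.
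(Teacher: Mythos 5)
Your proposal is correct and matches what the paper does: the paper simply cites Klainerman--Machedon and Hidano--Wang for the homogeneous bound $\|U(t)f\|_{L^qL^\infty}\lesssim\|f\|_{\dot H^\sigma}$ and leaves the Duhamel/Minkowski derivation of the two consequences implicit, which is exactly what you spell out. One small remark: you do not actually need the Christ--Kiselev lemma for the inhomogeneous $L^qL^\infty$ bound, because the input time exponent is $L^1_t$ and the output is $L^q_t$ with $q>1$, so Minkowski's integral inequality applied to $\int_0^T \mathbf 1_{\{s'<t\}}U(t-s')G(s')\,ds'$ already closes the estimate without any retarded-operator machinery; invoking Christ--Kiselev here is harmless but superfluous.
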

The following result (generalized Leibniz rule) is proved in \cite{GK1996AJM} (also see \cite{KP1988CPAM,L2007MPAG}) for Riesz and
Bessel potentials of order $s\in\mathbb{R}$.
\begin{lemma}\label{lemma:flr}
(Fractional Leibniz rule) Suppose that $1<p<\infty$, $s\geq 0$, $\alpha\geq0$, $\beta\geq0$ and $\frac{1}{p_i}+\frac{1}{q_i}=\frac{1}{p}$ with $i=1,2$, $1<q_i\leq\infty$, $1<p_i\leq\infty$. Then 
\[\|D^s(fg)\|_{p}\lesssim\|D^{s+\alpha}f\|_{p_1}\|D^{-\alpha}g\|_{q_1}+\|D^{-\beta}f\|_{p_2}\|D^{s+\beta}g\|_{q_2},\]
where
\[D^s=(\mu^2-\Delta)^{\frac{s}{2}},~~\mu\geq0.\]
\end{lemma}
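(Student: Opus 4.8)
The inequality is the Kato--Ponce (fractional Leibniz) estimate for Bessel potentials, and I would prove it by a Littlewood--Paley paraproduct decomposition combined with the Coifman--Meyer bilinear multiplier theorem, following \cite{GK1996AJM,KP1988CPAM,L2007MPAG}. Fix a dyadic partition of unity $1=\sum_{j\ge0}\varphi_j(\xi)$ with $\varphi_j$ supported where $\langle\xi\rangle:=(\mu^2+|\xi|^2)^{1/2}\sim2^j$ (and $\varphi_0$ on $\langle\xi\rangle\lesssim1$; for $\mu=0$ one uses instead the homogeneous partition $1=\sum_{j\in\mathbb Z}\varphi_j$ and interprets $D^{-\alpha}g$, $D^{-\beta}f$ on Schwartz data), with associated projections $\Delta_j$ and $S_j=\sum_{k\le j}\Delta_k$. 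Bony's decomposition gives
\[
fg=\underbrace{\textstyle\sum_j(\Delta_jf)(S_{j-3}g)}_{\Pi_1}+\underbrace{\textstyle\sum_j(S_{j-3}f)(\Delta_jg)}_{\Pi_2}+\underbrace{\textstyle\sum_{|j-k|\le3}(\Delta_jf)(\Delta_kg)}_{\Pi_3},
\]
and it suffices (everything being an a priori bound for Schwartz $f,g$, extended by density) to prove
\[
\|D^s\Pi_1\|_p+\|D^s\Pi_3\|_p\lesssim\|D^{s+\alpha}f\|_{p_1}\|D^{-\alpha}g\|_{q_1},\qquad
\|D^s\Pi_2\|_p\lesssim\|D^{-\beta}f\|_{p_2}\|D^{s+\beta}g\|_{q_2},
\]
where the second bound follows from the first by the symmetry $f\leftrightarrow g$.

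For $\Pi_1$, on the Fourier support of $(\Delta_jf)(S_{j-3}g)$ one has $\langle\xi\rangle\sim2^j$, $\langle\eta\rangle\lesssim2^{j-3}$, hence $\langle\xi+\eta\rangle\sim2^j$, so one may write
\[
D^s\big[(\Delta_jf)(S_{j-3}g)\big]=T_{m_j}\!\big(D^{s+\alpha}f,\ D^{-\alpha}g\big),\qquad
m_j(\xi,\eta)=\frac{\langle\xi+\eta\rangle^{s}\langle\eta\rangle^{\alpha}}{\langle\xi\rangle^{s+\alpha}}\,\widetilde\varphi_j(\xi)\,\widetilde\psi_{j}(\eta),
\]
where $T_m(G,H)(x)=\iint e^{ix\cdot(\xi+\eta)}m(\xi,\eta)\widehat G(\xi)\widehat H(\eta)\,d\xi\,d\eta$ and $\widetilde\varphi_j,\widetilde\psi_j$ are smooth cutoffs, equal to $1$ on the respective supports, that select $\langle\xi\rangle\sim2^j$ and $\langle\eta\rangle\lesssim2^{j-3}$. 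Since $s,\alpha\ge0$ and $\langle\eta\rangle\lesssim\langle\xi\rangle\sim\langle\xi+\eta\rangle$ there, the symbol $m_j$ and all its dilations satisfy the Mikhlin--H\"ormander bounds $|\partial_\xi^a\partial_\eta^b m_j(\xi,\eta)|\lesssim(|\xi|+|\eta|+\mu)^{-|a|-|b|}$ uniformly in $j$ and in $\mu\ge0$, so the Coifman--Meyer theorem yields $\|T_{m_j}(G,H)\|_p\lesssim\|\Delta_j'G\|_{p_1}\|S'_{j-3}H\|_{q_1}$ uniformly in $j$ (with fattened localizations $\Delta'_j$, $S'_{j-3}$). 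Because the outputs of distinct $j$ are essentially disjoint at frequency $2^j$, summing in $j$ via the Littlewood--Paley square function in $L^{p_1}$ and the Fefferman--Stein vector-valued maximal inequality applied to $S'_{j-3}H$ in $L^{q_1}$ gives $\|D^s\Pi_1\|_p\lesssim\|D^{s+\alpha}f\|_{p_1}\|D^{-\alpha}g\|_{q_1}$. The diagonal term $\Pi_3$ is handled similarly after using $\langle\xi+\eta\rangle^s\lesssim2^{js}$ — this is where $s\ge0$ enters, the output now sitting only at frequency $\lesssim2^j$ — grouping by output frequency $2^\ell\lesssim2^j$, applying Coifman--Meyer and Bernstein on each block to extract a factor $2^{-\delta(j-\ell)}$ for some $\delta>0$, summing the geometric series in $j-\ell$, and then the square function in $\ell$.

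The main obstacle is the endpoint exponents $p_i,q_i=\infty$, where the square-function/maximal-function summation above degenerates: when the factor carrying $\ge0$ derivatives lies in $L^\infty$ one cannot bound its Littlewood--Paley square function in $L^\infty$, and one must instead argue by duality, using that this square function belongs to $\mathrm{BMO}$ with norm controlled by the $L^\infty$ norm and pairing it against the dyadic Carleson-measure structure generated by the other factor. This, together with the routine but lengthy verification that each $m_j$ is a Coifman--Meyer multiplier uniformly across dyadic scales and in $\mu\ge0$, is precisely the technical content carried out in \cite{GK1996AJM}, whence the lemma follows.
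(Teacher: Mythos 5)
The paper does not actually prove this lemma: it is quoted as a known result with a pointer to \cite{GK1996AJM} (see also \cite{KP1988CPAM,L2007MPAG}), so there is no internal proof to compare against. Your paraproduct/Coifman--Meyer sketch is the standard modern route to Kato--Ponce-type estimates and is essentially correct as an outline: the reduction to $\Pi_1,\Pi_2,\Pi_3$, the identification of the localized symbols $m_j$, the verification that $s,\alpha\ge0$ makes them uniformly Mikhlin--H\"ormander, and the square-function/Fefferman--Stein summation are all sound. Two caveats. First, in the high-high term your geometric gain $2^{-\delta(j-\ell)}$ comes from the factor $\langle\xi+\eta\rangle^{s}\langle\xi\rangle^{-s}\sim 2^{-(j-\ell)s}$, so $\delta=s$ and the series only converges for $s>0$; the case $s=0$ must be treated separately (there it reduces to a Cauchy--Schwarz/square-function bound, or to H\"older when $\alpha=\beta=0$), which is harmless but should be said. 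Second, your description of the $p_i,q_i=\infty$ endpoints as ``routine but lengthy'' and ``precisely the technical content carried out in \cite{GK1996AJM}'' overstates what that reference does: \cite{GK1996AJM} argues by a different (semigroup/singular-integral) method rather than by the BMO--Carleson duality you describe, and in the homogeneous case $\mu=0$ with $\alpha=0$, $q_1=\infty$ the inequality is exactly the endpoint Kato--Ponce estimate, which is genuinely delicate and was settled only much later by Grafakos--Oh and Bourgain--Li. Since the paper itself only uses the lemma as a black box, deferring these points to the literature is acceptable, but the attribution of the endpoint argument should be corrected.
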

\begin{lemma}\label{lemma:fcr}
(Fractional Chain rule \cite{GOV1994poincare}) Let $F(u)=|u|^{p-1}u$ or $F(u)=|u|^p$, $p>1$. Then we have
\begin{align*}
    &\|F(u)\|_{\dot{H}^s}\lesssim\|u\|_{L^\infty}^{p-1}\|u\|_{\dot{H}^s},~~s\in(0,\min\{p,N/2\}),\\
    &\|F(u)\|_{H^s}\lesssim\|u\|_{L^\infty}^{p-1}\|u\|_{H^s},~~s\in(0,p).
\end{align*}
\end{lemma}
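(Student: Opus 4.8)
The plan is to split the claim into its homogeneous and inhomogeneous parts and handle the homogeneous seminorm first. For the inhomogeneous estimate, since $\|f\|_{H^s}\sim\|f\|_{L^2}+\|f\|_{\dot H^s}$ and $|F(u)|=|u|^p$ in both choices of $F$, one has the trivial bound $\|F(u)\|_{L^2}^2=\int|u|^{2p}\le\|u\|_{L^\infty}^{2(p-1)}\|u\|_{L^2}^2$, so everything reduces to proving $\|F(u)\|_{\dot H^s}\lesssim\|u\|_{L^\infty}^{p-1}\|u\|_{\dot H^s}$ on the stated ranges of $s$.

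\textbf{The range $0<s<1$.} Here I would use the Gagliardo (finite-difference) characterization
\begin{align*}
\|f\|_{\dot H^s}^2\sim\int_{\mathbb{R}^N}\int_{\mathbb{R}^N}\frac{|f(x)-f(y)|^2}{|x-y|^{N+2s}}\,dx\,dy,\qquad 0<s<1.
\end{align*}
Since $p>1$, $F$ is $C^1$ with $|F'(z)|\lesssim|z|^{p-1}$, so the mean value theorem gives the pointwise bound $|F(u(x))-F(u(y))|\lesssim\|u\|_{L^\infty}^{p-1}|u(x)-u(y)|$. Inserting this into the double integral and applying the same characterization to $\|u\|_{\dot H^s}$ yields the claim. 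This already covers the case $N=2$ relevant to the rest of the paper, where $\min\{p,N/2\}=1$ for any $p>1$.

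\textbf{The range $s\ge1$ (when $p$ and $N$ allow it).} Write $s=k+\sigma$ with $k\in\mathbb{N}$ and $\sigma\in[0,1)$. By the generalized chain rule, $\nabla^k F(u)$ is a finite linear combination of terms $F^{(m)}(u)\,\nabla^{\beta_1}u\cdots\nabla^{\beta_m}u$ with $1\le m\le k$ and $|\beta_1|+\cdots+|\beta_m|=k$; the hypothesis $s<p$ is exactly what guarantees that $F$ possesses the required classical derivatives with $|F^{(m)}(z)|\lesssim|z|^{p-m}$ for the relevant $m$. For each such term I would distribute the remaining $\sigma$ derivatives via the fractional Leibniz rule (Lemma \ref{lemma:flr}), then estimate each factor by interpolating between $L^\infty$ and $\dot H^s$ through Gagliardo–Nirenberg/Sobolev inequalities $\|\nabla^{\beta_i}u\|_{L^{r_i}}\lesssim\|u\|_{L^\infty}^{1-\theta_i}\|u\|_{\dot H^s}^{\theta_i}$, choosing the exponents so that $\sum_i\theta_i=1$ and $\sum_i r_i^{-1}=\tfrac12$; the constraint $s<N/2$ is precisely what keeps the intermediate Lebesgue exponents coming from $\dot H^s\hookrightarrow L^{2N/(N-2s)}$ finite and admissible. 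Summing over the finitely many terms gives the homogeneous bound. The inhomogeneous version with the wider range $s\in(0,p)$ follows by the same scheme with inhomogeneous interpolation inequalities, which no longer require $s<N/2$.

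\textbf{Main obstacle.} The delicate point is the fractional part $\sigma>0$ combined with the limited smoothness of $F(u)=|u|^{p-1}u$ (only finitely differentiable when $p\notin\mathbb{N}$), which makes a naive "differentiate $k$ times, then run a fractional product estimate" argument fragile. The robust route is the Christ–Weinstein scheme: rather than peeling off all integer derivatives at once, one iterates the $0<s<1$ finite-difference argument one derivative at a time, absorbing the resulting commutator/remainder terms with the interpolation inequalities above; the bookkeeping of interpolation exponents is where $s<\min\{p,N/2\}$ enters essentially. Since the estimate is classical, one may alternatively simply invoke \cite{GOV1994poincare}; the outline above records the structure of that proof.
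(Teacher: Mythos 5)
The paper offers no proof of Lemma~\ref{lemma:fcr}; it is simply quoted from Ginibre, Ozawa and Velo \cite{GOV1994poincare}. So there is no "paper's own proof" to match — what can be assessed is whether your argument is sound on its own terms, and whether it proves what is actually needed downstream.

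Your argument is correct. The reduction to the homogeneous seminorm is right (the $L^2$ bound $\|F(u)\|_{L^2}\le\|u\|_{L^\infty}^{p-1}\|u\|_{L^2}$ is immediate since $|F(u)|=|u|^p$ in both cases). For $0<s<1$, the Gagliardo double-integral characterization together with the pointwise Lipschitz bound $|F(z_1)-F(z_2)|\lesssim(\max\{|z_1|,|z_2|\})^{p-1}|z_1-z_2|$ — which holds because $p>1$ makes $F\in C^1$ with $|F'(z)|\lesssim|z|^{p-1}$, continuous through the origin — gives a complete, elementary proof. This is the range the paper actually uses: in the application one takes $N=2$, $p=2$ (so $\min\{p,N/2\}=1$) and $s\in\left(\frac34,1\right)$. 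Two small observations. First, your $0<s<1$ argument never invokes the restriction $s<N/2$ at all, so in that range your proof is actually cleaner than the stated hypothesis suggests; the $s<N/2$ constraint in the homogeneous version is there to make $\dot H^s$ a well-defined function space and to accommodate dimensions other than $2$, and it is automatic in the paper's setting. Second, your treatment of $s\ge1$ is, as you say yourself, an outline of the Christ--Weinstein iteration rather than a full proof: distributing the fractional remainder via Lemma~\ref{lemma:flr} and balancing Gagliardo--Nirenberg exponents so that $\sum_i\theta_i=1$, $\sum_i r_i^{-1}=\frac12$ does work, and $s<p$ is exactly the condition guaranteeing enough classical derivatives of $F$, but the bookkeeping is nontrivial and is what the cited reference carries out in detail. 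Since the paper never exercises the lemma in that regime, leaving it as a pointer to \cite{GOV1994poincare} is the same position the authors take; your self-contained treatment of the $s<1$ case is the genuinely useful contribution here.
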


\section{Approximate Blowup Profile}
In this section, we aim to  construct the approximate blowup profile.
We start with a general observation: If $u=u(t,x)$ solves \eqref{equ-1-hf-2}, then we define the function $v=v(s,y)$ by setting
\begin{align*}
u(t,x)=\frac{1}{\lambda(t)}v\left(s,\frac{x-\alpha(t)}{\lambda(t)}\right)
e^{i\gamma(t)},\ \ \frac{ds}{dt}=\frac{1}{\lambda(t)}.
\end{align*}
It is easy to check that $v=v(s,y)$ with $y=\frac{x-\alpha}{\lambda}$ satisfies
\begin{equation*}
i\partial_sv-Dv-v+|v|v=i\frac{\lambda_s}{\lambda}\Lambda v+i\frac{\alpha_s}{\lambda}\cdot\nabla v+\tilde{\gamma_s}v,
\end{equation*}
where we set $\tilde{\gamma_s}=\gamma_s-1$. Here the operators $D$ and $\nabla$ are understood as $D=D_y$ and $\nabla=\nabla_y$, respectively. Following the slow modulated ansatz strategy developed in \cite{RS2011JAMS,KLR2013ARMA} (also see \cite{KMR2009CPAM,GL2021JFA,GL2022CPDE,L2022}), we freeze the modulation
\begin{align}\label{mod1}
-\frac{\lambda_s}{\lambda}=a,\ \ \frac{\alpha_s}{\lambda}=b.
\end{align}
And we look for an approximate solution of the form
\begin{align}\label{eq.der1}
v(s,y) = Q_{\mathcal{P}(s)}(y),\ \mathcal{P}(s)=(a(s),b(s)),
\end{align}
where
\begin{align}\notag
Q_{\mathcal{P}}(y)=Q(y)+ \left(\sum_{k \geq 1}a^{k}R_{k,0}(y) \right) + \left( \sum_{k+\ell\geq 1,\ell\neq0} a^{k}\sum_{j=1}^2b^{\ell}_j R_{k,\ell,j}(y)\right),
\end{align}
where $\mathcal{P} = (a,b) \in \mathbb{R} \times \mathbb{R}^2.$ And the terms $R_{k,0}(y)$, $R_{k,\ell,j}(y)$ are decomposed in real and imaginary parts as follows
\begin{align}\notag
R_{k,0}(y)=T_{k,0}(y)+iS_{k,0}(y),\,R_{k,\ell,j}(y)=T_{k,\ell,j}(y)+iS_{k,\ell,j}(y).
\end{align}
We also use the notation
\begin{align}\notag
T_{k,\ell}=(T_{k,\ell,1},T_{k,\ell,2})~\,\text{and}~\,S_{k,\ell}=(S_{k,\ell,1},S_{k,\ell,2}),~~\text{where}~~\ell\neq0.
\end{align}
We shall define ODE for
$a(s), b(s) $ of type
$$ a_s = P_1(a,b),~~ b_s = P_2(a,b),$$
where $P_1,P_2$ are appropriate polynomials in $a,b$.

Using the heuristic asymptotic expansions 
\[\lambda (t) \sim t^2,~~ |\alpha(t)| \sim t^3,\]
from $\frac{ds}{dt}=\frac{1}{\lambda(t)}$ we see that $s= s_0-1/t$ goes to $\infty$ as $t \nearrow 0$ and $t=1/(s_0-s) \sim -1/s$ as $s \to + \infty.$  Moreover, the modulation relations \eqref{mod1} show that
\[a(s) = -\frac{\lambda_s}{\lambda} \sim \frac{1}{s} , \ |b(s)| = \frac{|\alpha_s|}{\lambda} \sim \frac{1}{s^2}.\]
These asymptotic expansions suggests to define $a(s), b(s)$ so that
\[
a_s=-\frac{a^2}{2},\ b_s=-ab.
\]
Moreover the asymptotic expansions for $a(s),b(s)$ show that we can consider $\mathcal{P} =(a,b)$ close to the origin with norm
\[\|\mathcal{P}\|^2 \sim a^2 + |b|.\]
We adjust the modulation equation for $(a(s),b(s))$ to ensure the solvability of the obtained system, and a specific algebra leads to the laws to leading order:
\[
a_s=-\frac{a^2}{2},\ b_s=-ab.
\]
From \eqref{eq.der1}, we have
$$ \partial_s v = -\frac{a^2}{2}\partial_aQ_{\mathcal{P}}-a\sum_{j=1}^{2}b_j\partial_{b_j}Q_{\mathcal{P}}.$$
Therefore, our aim is  to construct a higher order approximate profile $Q(y,a,b)=Q_{\mathcal{P}}(y)$ that is the approximate solution to
\begin{align}\notag
-i\frac{a^2}{2}\partial_aQ_{\mathcal{P}}-ia\sum_{j=1}^{2}b_j\partial_{b_j}Q_{\mathcal{P}}-DQ_{\mathcal{P}}
-Q_{\mathcal{P}}+ia\Lambda Q_{\mathcal{P}}-i\sum_{j=1}^2b_j\partial_jQ_{\mathcal{P}}+
|Q_{\mathcal{P}}|Q_{\mathcal{P}}=-\Phi_{\mathcal{P}},
\end{align}
where $\mathcal{P}=(a,b)$ is close to $0$ and $\Phi_{\mathcal{P}}$ is some small terms of order $\mathcal{O}(\|P\|^5)=\mathcal{O}(a^5+|b|^{5/2})$.

We have the following result about an approximate blowup profile $\mathbf{Q}_{\mathcal{P}}$, parameterized by $\mathcal{P}=(a,b)$, around the ground state $\mathbf{Q}=[Q,0]^{\top}$.
\begin{lemma}(Approximate Blowup Profile)\label{lemma-3app}
Let $\mathcal{P}=(a,b)$. There exists a smooth function $\mathbf{Q}_{\mathcal{P}}=\mathbf{Q}_{\mathcal{P}}(x)$ of the form
\begin{align*}
\mathbf{Q}_{\mathcal{P}}=&\mathbf{Q}+a\mathbf{R}_{1,0}+\sum_{j=1}^2b_j\mathbf{R}_{0,1,j}+a\sum_{j=1}^2b_j\mathbf{R}_{1,1,j}
+a^2\mathbf{R}_{2,0}\notag\\
&+\sum_{j=1}^2b_j^2\mathbf{R}_{0,2,j}+a^3\mathbf{R}_{3,0}+a^2\sum_{j=1}^2b_j\mathbf{R}_{2,1,j}+a^4\mathbf{R}_{4,0}
\end{align*}
that satisfies the equation
\begin{equation}\label{equ-3approxiamte}
-J\frac{1}{2}a^2\partial_a\mathbf{Q}_{\mathcal{P}}-Ja\sum_{j=1}^{2}b_j\partial_{b_j}\mathbf{Q}_{\mathcal{P}}
-D\mathbf{Q}_{\mathcal{P}}-\mathbf{Q}_{\mathcal{P}}+Ja\Lambda\mathbf{Q}_{\mathcal{P}}-J\sum_{j=1}^{2}b_j\partial_{b_j}\mathbf{Q}_{\mathcal{P}}
+|\mathbf{Q}_{\mathcal{P}}|\mathbf{Q}_{\mathcal{P}}
=-\mathbf{\Phi}_{\mathcal{P}}.
\end{equation}
Here the functions $\{\mathbf{R}_{k,l}\}_{0\leq k\leq3,0\leq l\leq 1}$ with $\mathbf{R}_{k,l}= ( \mathbf{R}_{k,l,1}, \mathbf{R}_{k,l,2})^T$ satisfy the following regularity and decay bounds:
\begin{align*}
\|\mathbf{R}_{k,l}\|_{H^m}+\|\Lambda\mathbf{R}_{k,l}\|_{H^m}
+\|\Lambda^2\mathbf{R}_{k,l}\|_{H^m}\lesssim1,\ &\text{for}\ m\in\{0,1\},\\
|\mathbf{R}_{k,l}|+|\Lambda\mathbf{R}_{k,l}|+|\Lambda^2\mathbf{R}_{k,l}|\lesssim
\langle x\rangle^{-3},\ &\text{for}\ x\in\mathbb{R}^2.
\end{align*}
Moreover, the term on the right-hand side of \eqref{equ-3approxiamte} satisfies
\begin{align*}
\|\mathbf{\Phi}_{\mathcal{P}}\|_{H^m}\lesssim\mathcal{O}(a^5+|b|^2|\mathcal{P}|),\ |\nabla\mathbf{\Phi}_{\mathcal{P}}|\lesssim\mathcal{O}(a^5+|b|^2|\mathcal{P}|)\langle x\rangle^{-3},
\end{align*}
for $m\in\{0,1\}$ and $x\in\mathbb{R}^2$.
\end{lemma}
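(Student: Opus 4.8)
The plan is to construct $\mathbf{Q}_{\mathcal{P}}$ by an order-by-order expansion in the parameter $\mathcal{P}=(a,b)$, treating $a$ as order $1$ and each $b_j$ as order $\tfrac12$ (so that monomials $a^k b^\ell$ have weight $k+\ell/2$, matching the stated error $\mathcal{O}(a^5+|b|^2|\mathcal{P}|)$ of weight $\tfrac52$). Substituting the ansatz into \eqref{equ-3approxiamte} and using the modulation laws $a_s=-\tfrac12 a^2$, $b_s=-ab$ to rewrite $\partial_s$, I would collect terms of equal weight. At weight $0$ one recovers the ground state equation $-DQ-Q+|Q|Q=0$ for $\mathbf{Q}=[Q,0]^\top$. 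At each higher weight the unknown profile $\mathbf{R}_{k,\ell}$ solves a linear equation of the schematic form $L_{\pm}\mathbf{R}_{k,\ell}=F_{k,\ell}$, where $L_+=D+1-2Q$ acts on the real part, $L_-=D+1-Q$ acts on the imaginary part (the linearization of $D+1-|u|u$ at $Q$), and $F_{k,\ell}$ is an explicit expression built from lower-order profiles, $\Lambda$, $\nabla$, and the nonlinearity. The first-order terms are the familiar ones: $\mathbf{R}_{1,0}$ is governed by $L_- S_{1,0}=\Lambda Q$ giving $S_{1,0}=-\tfrac12\Lambda\!\left(\text{something}\right)$-type solutions via $L_-(\rho)= \Lambda Q$ with $\rho$ built from the known kernel structure, and $\mathbf{R}_{0,1,j}$ from $L_-$ applied against $\partial_j Q$, yielding $R_{0,1,j}$ related to $x_j Q$ / $\partial_j Q$.

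The key structural input is the invertibility theory for $L_\pm$ on the relevant subspaces. I would invoke (as in \cite{KLR2013ARMA,GL2021JFA,GL2022CPDE}) the facts: $\ker L_-=\mathrm{span}\{Q\}$, $\ker L_+=\mathrm{span}\{\partial_{x_1}Q,\partial_{x_2}Q\}$ (modulo the non-degeneracy of the ground state $Q$ of $D+1$), together with the solvability conditions $(Q,F)=0$ for the $L_-$ equations and $(\partial_{x_j}Q,F)=0$ for the $L_+$ equations. At each stage one must check these orthogonality conditions for the accumulated right-hand side $F_{k,\ell}$; this is exactly where the choice of the modulation ODE coefficients ($-\tfrac12 a^2$, $-ab$) is forced — the quadratic terms in $a_s,b_s$ are tuned precisely to cancel the obstructions. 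When an obstruction cannot be absorbed into the modulation law (i.e. at orders where no free ODE coefficient remains), one checks it vanishes by a symmetry/parity argument or by an algebraic identity involving $\Lambda Q$ and the Pohozaev relations for $Q$. I would also use the smoothing/regularity theory for $L_\pm^{-1}$: since $D+1$ is elliptic of order $1$ and $Q$ together with all $\mathbf{R}$'s decay like $\langle x\rangle^{-3}$ (inherited from the decay of $Q$, which for the half-wave/fractional ground state in $2D$ is polynomial, $Q\sim\langle x\rangle^{-3}$), each solve preserves $H^1$ bounds and the pointwise bound $\langle x\rangle^{-3}$; the same is then true for $\Lambda\mathbf{R}_{k,\ell}$ and $\Lambda^2\mathbf{R}_{k,\ell}$ since $\Lambda$ commutes with the scaling structure of $L_\pm$ up to lower-order terms that are themselves controlled.

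Concretely the steps are: (i) expand \eqref{equ-3approxiamte}, substitute the modulation laws, and organize by weight; (ii) solve weight $1$ ($\mathbf{R}_{1,0}$, $\mathbf{R}_{0,1,j}$), weight $\tfrac32$ ($\mathbf{R}_{1,1,j}$), weight $2$ ($\mathbf{R}_{2,0}$, $\mathbf{R}_{0,2,j}$), weight $\tfrac52$ minus the leftover ($\mathbf{R}_{3,0}$, $\mathbf{R}_{2,1,j}$) and the final correction $\mathbf{R}_{4,0}$, at each step verifying the solvability conditions and, if needed, fixing lower-order ODE corrections (which here have already been fixed to $-\tfrac12 a^2,-ab$); (iii) define $\mathbf{\Phi}_{\mathcal{P}}$ as the collection of all residual monomials of weight $\geq\tfrac52$, which are finite in number and each a product of bounded, $\langle x\rangle^{-3}$-decaying profiles, hence satisfy the stated $H^m$ and pointwise bounds with the factor $a^5+|b|^2|\mathcal{P}|$; (iv) assemble the regularity/decay bounds for $\{\mathbf{R}_{k,\ell}\}$ from the elliptic regularity of $L_\pm^{-1}$ and the algebra of decay. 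The main obstacle I anticipate is (ii) combined with the decay claim in the \emph{nonlocal} setting: because $D=|\nabla|$ (resp.\ $(\mu^2-\Delta)^{1/2}$) is nonlocal, proving that $L_\pm^{-1}$ maps $\langle x\rangle^{-3}$-decaying, orthogonality-satisfying data back into $\langle x\rangle^{-3}$-decaying functions is not a routine ODE matching — it requires the resolvent/decay estimates for the fractional Schrödinger-type operator $D+1$ around $Q$ (as developed for the half-wave ground state), and one must be careful that the polynomial decay rate $3$ is exactly the threshold compatible with $\Lambda$ and $\Lambda^2$ acting on these profiles while keeping $H^1$ membership in $2D$. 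Verifying the solvability (orthogonality) conditions at weight $\tfrac52$, where the momentum parameter $b$ first interacts nontrivially with the scaling parameter $a$ in the non-radial geometry, is the other delicate point and is the place where the present argument genuinely departs from the radial constructions in \cite{GL2021JFA,GL2022CPDE}.
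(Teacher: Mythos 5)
Your proposal correctly mirrors the paper's strategy: solve \eqref{equ-3approxiamte} order by order via the linearization $L=\mathrm{diag}(L_+,L_-)$ with $\ker L=\mathrm{span}\{[\nabla Q,0]^T,[0,Q]^T\}$ (from \cite{FLS2016CPAM}), recycle the pure-$a$ orders from \cite{GL2022CPDE}, and handle the genuinely non-radial orders $\mathcal{O}(b),\mathcal{O}(ab),\mathcal{O}(b^2),\mathcal{O}(a^2b_j)$ by checking orthogonality to $\ker L$ and then inverting $L$ on the orthogonal complement, exactly as you describe. Two small corrections to your bookkeeping: the paper takes $\|\mathcal{P}\|^2\sim a^2+|b|$, so $b_j$ carries weight $2$ (not $\tfrac12$) and the monomial $a^kb^\ell$ has weight $k+2\ell$ with error of weight $\geq5$; and the solvability obstruction at $\mathcal{O}(ab)$ is killed not by Pohozaev relations but by the explicit commutator identities $[\Lambda,\nabla]=-\nabla$ and $[\nabla,L_-]=-\nabla Q$, while at $\mathcal{O}(b^2)$ and $\mathcal{O}(a^2b_j)$ it follows from parity (the radial $Q$, $S_{1,0}$, $T_{2,0}$ paired against the antisymmetric $S_{0,1,j}$, $T_{1,1,j}$, codified in the symmetry table of Remark~\ref{remark:1}).
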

\begin{proof}
We recall that the definition of the linear operator
\begin{equation}\notag
L={\left[ \begin{array}{cc}
L_+ & 0\\
0 & L_{-}
\end{array}
\right ]}
\end{equation}
acting on $L^2(\mathbb{R}^2,\mathbb{R}^{2})$, where $L_+$ and $L_-$ denote the unbounded operators acting on $L^2(\mathbb{R}^2,\ \mathbb{R}^{2})$ given by
\begin{align}\notag
L_+=D+1-2Q,\ L_-=D+1-Q.
\end{align}
From \cite{FLS2016CPAM}, we have the key property that the kernel of $L$ is given by
\begin{equation}\notag
\ker L=span\left\{{\left[ \begin{array}{c}
\nabla Q\\ 0
\end{array}
\right ]},{\left[ \begin{array}{c}
 0\\Q
\end{array}
\right ]}\right\}.
\end{equation}
Note that the bounded inverse $L^{-1}=diag\{L_+^{-1},L_-^{-1}\}$ exists on the orthogonal complement $\{\ker L\}^{-1}=\{\nabla Q\}^{\bot}\bigoplus\{Q\}^{\bot}$.

$\mathbf{Step~1}$ Determining the functions $\mathbf{R}_{k,l}$.
We discuss our ansatz for $\mathbf{Q}_{\mathcal{P}}$ to solve \eqref{equ-3approxiamte} order by order. The proof of the regularity and decay bounds for the functions  $\mathbf{R}_{k,l}$ will be given further below.

By the same argument as \cite[Lemma 3.1]{GL2022CPDE}, the Order $\mathcal{O}(1)$, $\mathcal{O}(a)$, $\mathcal{O}(a^2)$, $\mathcal{O}(a^3)$ and $\mathcal{O}(a^4)$ can be obtained. Here we omit it. We only give the following identities, which  will be very useful  in the later discussion.
\begin{align*}
    L_-S_{1,0}=\Lambda Q,~~L_+T_{2,0}=\frac{1}{2}S_{1,0}+\Lambda S_{1,0}-\frac{1}{2}S_{1,0}^2, ~~(S_{1,0},S_{1,0})=-2(T_{2,0},Q).
\end{align*}
Those identities can be found in \cite[Lemma 3.1]{GL2022CPDE}.
Here we only give the following that contains the parameter $b$.

$\mathbf{Order}$ $\mathcal{O}(b)$:  Here we need to solve
\begin{align}\notag
L\mathbf{R}_{0,1}=-J\nabla\mathbf{Q}.
\end{align}
We observe the orthogonality $J\nabla\mathbf{Q}=[0,\nabla Q]^{\top}\bot\ker L$, since $(\nabla Q,Q)=0$ holds. Thus there is a unique solution $\mathbf{R}_{0,1}\perp\ker L$, which we denote as
\begin{align}\notag
\mathbf{R}_{0,1}=-L^{-1}J\nabla\mathbf{Q}=
{\left[\begin{array}{c}
0 \\ -L_{-}^{-1}\nabla Q
\end{array}\right]}.
\end{align}

$\mathbf{Order}$ $\mathcal{O}(ab)$:
We find that $\mathbf{R}_{1,1}$ has to solve the equation
\begin{align}\label{3app-1}
L\mathbf{R}_{1,1}=-J\mathbf{R}_{0,1}+J\Lambda\mathbf{R}_{0,1}-J\nabla\mathbf{R}_{1,0}
+(\mathbf{R}_{1,0}\cdot\mathbf{{R}}_{0,1})Q^{-1}\mathbf{Q},
\end{align}
where $\mathbf{Q}=[Q,0]^{\top}$ and we use the fact that $\mathbf{Q}\cdot\mathbf{R}_{1,0}=\mathbf{Q}\cdot\mathbf{R}_{0,1}=0$. Now, we need to prove
\begin{align}\label{3app-3}
\text{the right-hand side of the above \eqref{3app-1} is}\ \bot \ker L.
\end{align}
Indeed, we note that
\begin{align*}
\mathbf{R}_{1,0}={\left[\begin{array}{c}
0\\S_{1,0}
\end{array}
\right]},~\ &\text{with}~\ L_{-}S_{1,0}=\Lambda Q,\\
\mathbf{R}_{0,1}={\left[\begin{array}{c}
0\\S_{0,1}
\end{array}
\right]},~\ &\text{with}~\ L_{-}S_{0,1}=-\nabla Q.
\end{align*}
Notice that $S_{0,1}=(S_{0,1,1},S_{0,1,2})$ is a vector. Therefore the orthogonality condition \eqref{3app-3} is equivalent to
\begin{align}\label{3app-2}
(\nabla Q,S_{0,1})-(\nabla Q,\Lambda S_{0,1})+(\nabla Q,\nabla S_{1,0})+(\nabla Q,S_{1,0}S_{0,1})=0.
\end{align}
To see that this holds true, we argue as follows. Using the  commutator formula $[\Lambda,\nabla]=-\nabla$ and intergrating by part, we obtain
\begin{align*}
-(\nabla Q,\Lambda S_{0,1})&=(\Lambda\nabla Q,S_{0,1})=(\nabla\Lambda Q,S_{0,1})-(\nabla Q,S_{0,1})\notag\\
&=(\nabla L_{-}S_{1,0},S_{0,1})-(\nabla Q,S_{0,1}).
\end{align*}
Next, since $L_{-}$ is self-adjoint and the definition of $S_{0,1}$, for any function $F$, we have
\begin{align*}
(\nabla L_{-}F,S_{0,1})+(\nabla Q, \nabla F)&=-(L_{-}F,\nabla S_{0,1})-(L_{-}S_{0,1},\nabla F)=(F,[\nabla,L_{-}]S_{0,1})\\
&=-(F,\nabla QS_{0,1}),
\end{align*}
where we use the commutator formulate $[\nabla,L_{-}]=-\nabla Q $. By combining the above equalities, we conclude that \eqref{3app-2} holds. This means that the \eqref{3app-3} holds, and hence there is a unique solution $\mathbf{R}_{1,1}\bot\ker L$ of the equation \eqref{3app-1}. Moreover, we note that
\begin{align}\notag
\mathbf{R}_{1,1}={\left[\begin{array}{c}T_{1,1}\\0\end{array}\right]}.
\end{align}

$\mathbf{Order}$ $\mathcal{O}(b^2)$, where $b^2=(b_1^2,b_2^2)$.

We find the equation
\begin{align}\notag
L\mathbf{R}_{0,2}=-J\nabla  \mathbf{R}_{0,1}+\frac{1}{2}|\mathbf{R}_{0,1}|^2Q^{-1}\mathbf{Q}.
\end{align}
Since $\mathbf{R}_{0,1}=[0,S_{0,1}]^{\top}$ with $L_{-}S_{0,1}=-\nabla Q$ and $\mathbf{Q}=[Q,0]^{\top}$, the solvability condition reads
\begin{align}\notag
(\nabla Q,\nabla S_{0,1})+\frac{1}{2}(\nabla Q,S_{0,1}^2)=0.
\end{align}
Obviously, this is true, since $Q$ is radial function and $S_{0,1,j}$ is antisymmetry function. Hence there exists a unique solution $\mathbf{R}_{0,2,j}\bot\ker L$, and we have
\begin{align}\notag
\mathbf{R}_{0,2,j}={\left[\begin{array}{c}L_{+}^{-1}\left(\partial_{y_j} S_{0,1,j}+\frac{1}{2}|S_{0,1,j}|^2\right)\\0\end{array}\right]},~~j=1,2.
\end{align}

$\mathbf{Order}$ $\mathcal{O}(a^2b_j),\,j=1,2$:
Note that $\mathbf{R}_{1,0}\cdot\mathbf{Q}=\mathbf{R}_{1,1,j}\cdot\mathbf{R}_{1,0}
=\mathbf{R}_{0,1,j}\cdot\mathbf{R}_{2,0}=0$. We find the equation
\begin{align}\label{3app-7}
L\mathbf{R}_{2,1,j}=&\frac{3}{2}J\mathbf{R}_{1,1,j}+J\Lambda\mathbf{R}_{1,1,j}
-J\partial_j\mathbf{R}_{2,0}+\Re\mathbf{R}_{1,1,j}\mathbf{R}_{1,0}\notag\\
&+(\mathbf{R}_{1,0}\cdot\mathbf{\bar{R}}_{0,1,j})\mathbf{R}_{1,0}
+\frac{|\mathbf{R}_{1,0}|^2\mathbf{R}_{0,1,j}}{Q}.
\end{align}
Using the symmetries of the previously constructed functions, we can check that
\begin{align}\notag
\text{Right-hand side of \eqref{3app-7}} \bot\ker L,
\end{align}
Since $(g,Q)=0$ for any antisymmetry function $g\in L^{2}(\mathbb{R}^2)$. Thus there is a unique solution $\mathbf{R}_{2,1,j}\bot\ker L$ of the equation \eqref{3app-7}, and we set  that $\mathbf{R}_{2,1,j}=[0,S_{2,1,j}]^{\top}$ with some radial function $S_{2,1,j}$, $j=1,2$.

$\mathbf{Step~2}:$ By the similar argument as \cite{GL2022CPDE,GL2021JFA}, the regularity and decay bounds can be easily obtained.  
The pointwise estimates for the terms $R_{j,k}$ follow easily by the argument presented in \cite[Appendix A]{GL2022CPDE}.

 Now the proof of Lemma \ref{lemma-3app} is now complete.
\end{proof}

\begin{remark}\label{remark:1}
. Note that $L_{-}>0$ on $Q^{\bot}$ and we have $S_{1,0}\bot Q$ and $S_{0,1,j}\bot Q$, $j=1,2$.

2. The proof of Lemma \ref{lemma-3app} will actually show that the functions $\{\mathbf{R}_{k,l}\}$ have the following symmetry structure (symmetry terms are even function, while antisymmetry stands for odd functions)
\begin{align*}
&\mathbf{R}_{1,0}=\left[\begin{array}{c}0\\symmetry\end{array}\right],\
\mathbf{R}_{0,1,j}=\left[\begin{array}{c}0\\antisymmetry\end{array}\right],\
\mathbf{R}_{1,1,j}=\left[\begin{array}{c}antisymmetry\\0\end{array}\right],\\
&\mathbf{R}_{2,0}=\left[\begin{array}{c}symmetry\\0\end{array}\right],\
\mathbf{R}_{0,2,j}=\left[\begin{array}{c}symmetry\\0\end{array}\right],\
\mathbf{R}_{3,0}=\left[\begin{array}{c}0\\symmetry\end{array}\right],\\
&\mathbf{R}_{2,1,j}=\left[\begin{array}{c}0\\antisymmetry\end{array}\right],\
\mathbf{R}_{4,0}=\left[\begin{array}{c}symmetry\\0\end{array}\right].
\end{align*}
These symmetry properties will be of essential use in the sequel.
\end{remark}

We now turn to some key properties of the approximate blowup profile $\mathbf{Q}_{\mathcal{P}}$ constructed in Lemma \ref{lemma-3app}.
\begin{lemma}\label{lemma-3app-2}
The mass, the energy and the linear momentum of $\mathbf{Q}_{\mathcal{P}}$ satisfy
\begin{align*}
\int|\mathbf{Q}_{\mathcal{P}}|^2&=\int  Q^2+\mathcal{O}(a^4+|b|^2+|b||\mathcal{P}|^2);\\
E(\mathbf{Q}_{\mathcal{P}})&=e_1a^2+\mathcal{O}(a^4+|b|^2+|b||\mathcal{P}|^2);\\
P(\mathbf{Q}_{\mathcal{P}})&=p_1 b+\mathcal{O}(a^4+|b|^2+|b||\mathcal{P}|^2).
\end{align*}
Here $e_1>0$ and $p_1>0$ are the positive constants given by
\begin{align}\notag
e_1=\frac{1}{2}(L_{-}S_{1,0},S_{1,0}),\ p_1=2\int_{\mathbb{R}^2}L_{-}S_{0,1}\cdot S_{0,1},
\end{align}
where $S_{1,0}$ and $S_{0,1}$ satisfy $L_{-}S_{1,0}=\Lambda Q$ and $L_{-}S_{0,1}=-\nabla Q$, respectively.
\end{lemma}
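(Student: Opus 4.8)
The plan is to treat $M$, $E$, $P$ as analytic functionals of $\mathbf{Q}_{\mathcal{P}}$, substitute the explicit expansion from Lemma~\ref{lemma-3app}, and collect terms by order in $(a,b)$. Two ingredients carry the argument: (i) the parity structure recorded in Remark~\ref{remark:1} (each slot of each $\mathbf{R}_{k,\ell}$ is either an even or an odd function, with $Q$ radial), together with the fact that $D$ preserves parity while $\partial_k$ reverses it, which makes many cross terms vanish because the integrand is odd; and (ii) the algebraic identities $L_{-}S_{1,0}=\Lambda Q$, $L_{-}S_{0,1}=-\nabla Q$, $(S_{1,0},S_{1,0})=-2(T_{2,0},Q)$, together with the ground state relation $DQ+Q=Q^{2}$ and the Pohozaev identity $E(Q)=0$ — the latter following from $DQ+Q=Q^{2}$ upon pairing with $\Lambda Q$ and using $(\Lambda Q,DQ)=\tfrac12(Q,DQ)$, $(\Lambda Q,Q^{2})=\tfrac13\int Q^{3}$, $(\Lambda Q,Q)=0$. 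Throughout one must keep in mind that the claimed remainder contains \emph{no} term of order $a$, $ab$ or $a^{3}$ (note $a^{2}|b|\lesssim|b||\mathcal{P}|^{2}$ since $|\mathcal{P}|^{2}\sim a^{2}+|b|$), so every such term is forced to cancel; this is exactly where the symmetry structure of Remark~\ref{remark:1} is used.

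\emph{Mass.} Writing $\mathbf{Q}_{\mathcal{P}}=\mathbf{Q}+\mathbf{W}$ and expanding $|\mathbf{Q}_{\mathcal{P}}|^{2}=Q^{2}+2\,\mathbf{Q}\cdot\mathbf{W}+|\mathbf{W}|^{2}$, the cross term only sees the first component of $\mathbf{W}$, whose corrections up to order $a^{4}$ are the odd profile $a\sum_{j}b_{j}T_{1,1,j}$ and the even ones $a^{2}T_{2,0}$, $\sum_{j}b_{j}^{2}T_{0,2,j}$, $a^{4}T_{4,0}$; hence $2(\mathbf{Q},\mathbf{W})=2a^{2}(Q,T_{2,0})+\mathcal{O}(|b|^{2}+a^{4})$, the $ab$-term vanishing by parity. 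In $(\mathbf{W},\mathbf{W})$ the leading contribution is $a^{2}(S_{1,0},S_{1,0})$, the $ab$-cross term $2a\sum_{j}b_{j}(S_{1,0},S_{0,1,j})$ vanishes ($S_{1,0}$ even, $S_{0,1,j}$ odd), and everything else is $\mathcal{O}(a^{4}+|b|^{2}+a^{2}|b|)$. The coefficient of $a^{2}$ is then $(S_{1,0},S_{1,0})+2(T_{2,0},Q)=0$, which yields the stated expansion of $\int|\mathbf{Q}_{\mathcal{P}}|^{2}$.

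\emph{Energy.} In the real form $E(\mathbf{Q}_{\mathcal{P}})=\tfrac12\int(f_{1}Df_{1}+f_{2}Df_{2})-\tfrac13\int(f_{1}^{2}+f_{2}^{2})^{3/2}$ with $f_{1}=Q+g_{1}$, $f_{2}=g_{2}$, $g_{1}=\mathcal{O}(a^{2},ab,b^{2})$ (the profile $T_{1,0}$ vanishes), $g_{2}=aS_{1,0}+\sum_{j}b_{j}S_{0,1,j}+\mathcal{O}(a^{3},a^{2}b)$. Because $(f_{1}^{2}+f_{2}^{2})^{3/2}$ depends on $f_{2}$ only through $f_{2}^{2}$, its Taylor expansion about $(Q,0)$ reads $Q^{3}+3Q^{2}g_{1}+\tfrac32 Qg_{2}^{2}+(\text{order}\ge a^{4})$, so there is no order-$a$ or order-$a^{3}$ term, and the order-$ab$ pieces $(Q^{2},T_{1,1,j})$, $(QS_{1,0},S_{0,1,j})$ vanish by parity; the same parity argument removes the order-$a$ and order-$ab$ pieces of $\int(f_{1}Df_{1}+f_{2}Df_{2})$, while the order-$1$ piece is $E(Q)=0$. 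Collecting order-$a^{2}$ terms leaves $a^{2}\bigl[(Q,DT_{2,0})-(Q^{2},T_{2,0})+\tfrac12(S_{1,0},DS_{1,0})-\tfrac12(Q,S_{1,0}^{2})\bigr]$; using $DQ-Q^{2}=-Q$ and $D-Q=L_{-}-1$ this becomes $a^{2}\bigl[-(Q,T_{2,0})+\tfrac12(L_{-}S_{1,0},S_{1,0})-\tfrac12(S_{1,0},S_{1,0})\bigr]$, and $(S_{1,0},S_{1,0})=-2(T_{2,0},Q)$ collapses the bracket to $e_{1}=\tfrac12(L_{-}S_{1,0},S_{1,0})$. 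Positivity $e_{1}>0$ follows from $L_{-}>0$ on $Q^{\bot}$ and $S_{1,0}\bot Q$ (Remark~\ref{remark:1}); the remaining terms are $\mathcal{O}(a^{4}+|b|^{2}+a^{2}|b|)$.

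\emph{Momentum, and the main difficulty.} Here $P(\mathbf{Q}_{\mathcal{P}})=\int(f_{1}\nabla f_{2}-f_{2}\nabla f_{1})$; since $f_{1}=Q+\mathcal{O}(a^{2},ab,b^{2})$ has neither an order-$a$ nor an order-$b$ correction (the profiles $T_{1,0}$ and $T_{0,1,j}$ vanish), the only contributions not already inside the claimed remainder are of order $a$ and $b$. The order-$a$ part of the $k$-th component is $-2a(\partial_{k}Q,S_{1,0})=0$ ($S_{1,0}$ even, $\partial_{k}Q$ odd), and the order-$b$ part is $-2\sum_{j}b_{j}(\partial_{k}Q,S_{0,1,j})$; using $L_{-}S_{0,1,k}=-\partial_{k}Q$, self-adjointness of $L_{-}$, and the rotational invariance of $L_{-}$ (hence of $L_{-}^{-1}$) against the vector transformation of $\nabla Q$, one gets $(\partial_{k}Q,S_{0,1,j})=-(L_{-}S_{0,1,k},S_{0,1,j})=-c\,\delta_{jk}$ with $c>0$, so this part equals $p_{1}b_{k}$ with $p_{1}>0$ (again from $L_{-}>0$ on $Q^{\bot}$, $S_{0,1,j}\bot Q$), giving the asserted expansion of $P(\mathbf{Q}_{\mathcal{P}})$. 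The genuinely load-bearing step in all three computations is this parity audit: one must verify term by term that \emph{every} contribution of order $a$, $ab$ or $a^{3}$ is annihilated by the symmetry structure of Remark~\ref{remark:1}, for such terms lie outside $\mathcal{O}(a^{4}+|b|^{2}+|b||\mathcal{P}|^{2})$; once that is secured, the identities $(S_{1,0},S_{1,0})=-2(T_{2,0},Q)$, $L_{-}S_{1,0}=\Lambda Q$, the ground state equation and its Pohozaev relation finish the job.
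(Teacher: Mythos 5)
Your proposal is correct and follows essentially the same route as the paper: substitute the expansion of $\mathbf{Q}_{\mathcal{P}}$ from Lemma~\ref{lemma-3app}, kill the dangerous odd-order cross terms by the parity structure of Remark~\ref{remark:1}, and close the order-$a^2$ (respectively order-$b$) coefficient with $(S_{1,0},S_{1,0})=-2(T_{2,0},Q)$, $L_-S_{1,0}=\Lambda Q$, $L_-S_{0,1}=-\nabla Q$ and the ground-state equation. The paper's own proof is much terser (mass and energy are delegated to the radial 2D reference and only the momentum is written out); you supply the omitted details, including the Pohozaev identity $E(Q)=0$ and the verification that $(L_-S_{0,1,k},S_{0,1,j})$ is diagonal, but the strategy is identical.
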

\begin{proof}
The mass and energy estimates is similar to  \cite[Lemma 3.2]{GL2022CPDE}. A small modification due to the parameter $b$.

For the expansion of the linear momentum functional, we observe that $P(\mathbf{f})=2\int f_1\nabla f_2$, where $\mathbf{f}=[f_1,f_2]^{\top}$. Hence
\begin{align*}
P(\mathbf{Q}_{\mathcal{P}})=&2a\int Q\nabla S_{1,0}+2\sum_{j=1}^2b_j\int Q\nabla S_{0,1,j}+2a^2\sum_{j=1}^2b_j\int T_{1,1,j}\nabla S_{1,0}+2a^3\int T_{2,0}\nabla S_{1,0}\\
&+\mathcal{O}(a^4+|b|^2+|b||\mathcal{P}|^2)\\
=&-2\sum_{j=1}^2b_j( \nabla Q, S_{0,1,j})+\mathcal{O}(a^4+|b|^2+|b||\mathcal{P}|^2)\\
=&2\sum_{j=1}^2b_j(L_{-}S_{0,1},S_{0,1,j})+\mathcal{O}(a^4+|b|^2+|b||\mathcal{P}|^2)\\
=&bp_1+\mathcal{O}(a^4+|b|^2+|b||\mathcal{P}|^2),
\end{align*}
Since $L_{-}S_{0,1}=-\nabla Q$, and using that $\int Q\nabla S_{1,0}+\int T_{1,1,j}\nabla S_{1,0}+\int T_{2,0}\nabla S_{1,0}=0$ due to the fact that $Q,S_{1,0},T_{2,0}$ are the radial symmetry functions.

The proof of Lemma \ref{lemma-3app-2} is now complete.
\end{proof}

\section{Modulation Estimates and Energy Estimates}\label{section-mod-estimate}
\subsection{Geometrical Decomposition and Modulation Equations}
Let $u\in H^{s}(\mathbb{R}^2)$, $s\in\left(\frac{3}{4},1\right)$ be a solution of \eqref{equ-1-hf-2} on some time interval $[t_0,t_1]$ with $t_1<0$. Assume that $u(t)$ admits a geometrical decomposition of the form
\begin{align}\label{mod-decomposition}
u(t,x)=\frac{1}{\lambda(t)}[Q_{\mathcal{P}(t)}+\epsilon]
\left(s,\frac{x-\alpha(t)}{\lambda(t)}\right)
e^{i\gamma(t)},\ \ \frac{ds}{dt}=\frac{1}{\lambda(t)},
\end{align}
with $\mathcal{P}(t)=(a(t),b(t))$, and we impose the uniform smallness bound
\begin{align*}
a^2(t)+|b(t)|+\|\epsilon\|_{H^s}^2\ll1.
\end{align*}
Furthermore, we assume that $u(t)$ has almost critical mass in the sense that
\begin{align*}
\left|\int|u(t)|^2-\int Q^2\right|\lesssim\lambda^2(t),\ \ \forall t\in[t_0,t_1].
\end{align*}
To fix the modulation parameters $\{a(t),b(t),\lambda(t),\alpha(t),\gamma(t)\}$ uniquely, we impose the following orthogonality conditions on $\epsilon=\epsilon_1+i\epsilon_2$ as follows:
\begin{equation}\label{mod-orthogonality-condition}
\begin{aligned}
(\epsilon_2,\Lambda Q_{1\mathcal{P}})-(\epsilon_1,\Lambda Q_{2\mathcal{P}})=0,\\
(\epsilon_2,\partial_aQ_{1\mathcal{P}})-(\epsilon_1,\partial_aQ_{2\mathcal{P}})=0,\\
(\epsilon_2,\partial_{b_j}Q_{1\mathcal{P}})-(\epsilon_1,\partial_{b_j}Q_{2\mathcal{P}})=0,\\
(\epsilon_2,\partial_{y_j} Q_{1\mathcal{P}})-(\epsilon_1,\partial_{y_j} Q_{2\mathcal{P}})=0,\\
(\epsilon_2,\rho_1)-(\epsilon_1,\rho_2)=0,
\end{aligned}
\end{equation}
where $j=1,2$, $Q_{\mathcal{P}}=Q_{1\mathcal{P}}+iQ_{2\mathcal{P}}$,which (in terms of the vector notation used in Section 3) means that
\begin{align}\notag
\mathbf{Q}_{\mathcal{P}}={\left[\begin{array}{c}
Q_{1\mathcal{P}}\\Q_{2\mathcal{P}}
\end{array}
\right]},
\end{align} 
and 
 the function $\rho=\rho_1+i\rho_2$ is defined by
\begin{align}\label{mod-definition-rho}
L_{+}\rho_1=S_{1,0}, L_{-}\rho_2=aS_{1,0}\rho_1+a\Lambda\rho_1-2aT_{2,0}
+b\cdot S_{0,1}\rho_1-b\cdot\nabla\rho_1-b\cdot T_{1,1},
\end{align}
where $S_{1,0}$, $T_{2,0}$ and $T_{1,1,j}$ are the functions introduced in the proof of Lemma \ref{lemma-3app}. Note that $L_{+}^{-1}$ exists on $L^2(\mathbb{R}^2)$ since $S_{1,0}\perp \nabla Q$ (see Remark \ref{remark:1}) and thus $\rho_1$ is well-defined. Moreover, it is easy to see that the right-hand side in the equation for $\rho_2$ is orthogonality to $Q$. Indeed
\begin{align*}
(Q,S_{1,0}\rho_1+\Lambda\rho_1-2T_{2,0})&=
(QS_{1,0},\rho_1)-(\Lambda Q,\rho_1)-2(Q,T_{2,0})\\
&=(QS_{1,0},\rho_1)-(S_{1,0},L_{-}\rho_1)+(S_{1,0},S_{1,0})\\
&=-(S_{1,0},L_{+}\rho_1)+(S_{1,0},S_{1,0})=0,
\end{align*}
using that $(S_{1,0},S_{1,0})=-2(T_{2,0},Q)$ (this relation can be obtained in Lemma \ref{lemma-3app} or see \cite{GL2022CPDE}), and the definition of $\rho_1$. Moreover, we clearly see that $S_{0,1,j}\rho_1-\partial_{y_j}\rho_1-T_{1,1,j}\bot Q$, since $S_{0,1,j}$ and $T_{1,1,j}$ are the antisymmetry functions, whereas $\rho_1$ and $Q$ are radial symmetry functions. Hence $\rho_2$ is well-defined.

We refer to Appendix \ref{section-app-mod-1} for some standard arguments, which show that the orthogonality condition \eqref{mod-orthogonality-condition} imply that the modulation parameters $\{a(t),b(t),\lambda(t),\alpha(t),\gamma(t)\}$ are uniquely determined, provided that $\epsilon=\epsilon_1+i\epsilon_2\in H^{s}(\mathbb{R}^2)$ is sufficiently small. Moreover, it follows from the standard arguments that $\{a(t),b(t),\lambda(t),\alpha(t),\gamma(t)\}$ are $C^1$-functions.

Inserting the decomposition \eqref{mod-decomposition} into \eqref{equ-1-hf-2}, we can obtain the following system
\begin{align}\label{mod-system-1}
&\left(a_s+\frac{1}{2}a^2\right)\partial_aQ_{1\mathcal{P}}+\sum_{j=1}^2((b_j)s+ab_j)\partial_{b_j}Q_{1\mathcal{P}}
+\partial_s\epsilon_1-M_{-}(\epsilon)+a\Lambda\epsilon_1-b\cdot\nabla\epsilon_1\notag\\
=&\left(\frac{\lambda_s}{\lambda}+a\right)(\Lambda Q_{1\mathcal{P}}+\Lambda\epsilon_1)
+\left(\frac{\alpha_s}{\lambda}-b\right)\cdot(\nabla Q_{1\mathcal{P}}+\nabla\epsilon_1)\notag\\
&+\tilde{\gamma}_s(Q_{2\mathcal{P}}+\epsilon_2)+\Im(\Phi_{\mathcal{P}})
-R_2(\epsilon),\\\label{mod-system-2}
&\left(a_s+\frac{1}{2}a^2\right)\partial_aQ_{2\mathcal{P}}+\sum_{j=1}^2((b_j)s+ab_j)\partial_{b_j}Q_{2\mathcal{P}}
+\partial_s\epsilon_2+M_{+}(\epsilon)+a\Lambda\epsilon_2-b\cdot\nabla\epsilon_2\notag\\
=&\left(\frac{\lambda_s}{\lambda}+a\right)(\Lambda Q_{2\mathcal{P}}+\Lambda\epsilon_2)
+\left(\frac{\alpha_s}{\lambda}-b\right)\cdot(\nabla Q_{2\mathcal{P}}+\nabla\epsilon_2)\notag\\
&-\tilde{\gamma}_s(Q_{1\mathcal{P}}-\epsilon_1)-\Re(\Phi_{\mathcal{P}})+R_1(\epsilon),
\end{align}
where $j=1,2$.
Here $\Phi_{\mathcal{P}}$ denotes the error term from Lemma \ref{lemma-3app}, and $M=(M_{+},M_{-})$ are the small deformations of the linearized operator $L=(L_{+},L_{-})$ given by
\begin{align}\label{mod-define-M1}
M_{+}(\epsilon)=&D\epsilon_1+\epsilon_1
-\frac{3}{2}|Q_{\mathcal{P}}|\epsilon_1
-\frac{1}{2}|Q_{\mathcal{P}}|^{-1}(Q_{1\mathcal{P}}^2-Q_{2\mathcal{P}}^2)\epsilon_1-|Q_{\mathcal{P}}|^{-1}Q_{1\mathcal{P}}Q_{2\mathcal{P}}\epsilon_2,\\\label{mod-define-M2}
M_{-}(\epsilon)=&D\epsilon_2+\epsilon_2
-\frac{3}{2}|Q_{\mathcal{P}}|\epsilon_2
-\frac{1}{2}|Q_{\mathcal{P}}|^{-1}(Q_{1\mathcal{P}}^2-Q_{2\mathcal{P}}^2)\epsilon_2-|Q_{\mathcal{P}}|^{-1}Q_{1\mathcal{P}}Q_{2\mathcal{P}}\epsilon_1.
\end{align}
And $R_1(\epsilon)$, $R_2(\epsilon)$ are the high order terms about $\epsilon$.
\begin{align*}
R_1(\epsilon)=&\frac{5}{4}|Q_{\mathcal{P}}|^{-1}Q_{1\mathcal{P}}|\epsilon|^2
+\frac{3}{8}|Q_{\mathcal{P}}|^{-1}\left(Q_{1\mathcal{P}}(\epsilon_1^2-\epsilon_2^2)
+2Q_{2\mathcal{P}}\epsilon_1\epsilon_2\right)\\
&+\frac{1}{8}|Q_{\mathcal{P}}|^{-3}\left((Q_{1\mathcal{P}}^3-3Q_{1\mathcal{P}}Q_{2\mathcal{P}}^2)(\epsilon_1-\epsilon_2)
+2(3Q_{1\mathcal{P}}^2Q_{2\mathcal{P}}-Q_{2\mathcal{P}}^3)\epsilon_1\epsilon_2\right)+\mathcal{O}(\epsilon^3),\\
R_2(\epsilon)=&\frac{5}{4}|Q_{\mathcal{P}}|^{-1}Q_{2\mathcal{P}}|\epsilon|^2
+\frac{3}{8}|Q_{\mathcal{P}}|^{-1}\left(2Q_{1\mathcal{P}}\epsilon_1\epsilon_2
+Q_{2\mathcal{P}}(\epsilon_1^2-\epsilon_2^2)\right)\\
&+\frac{1}{8}|Q_{\mathcal{P}}|^{-3}\left(2(Q_{1\mathcal{P}}^3-3Q_{1\mathcal{P}}Q_{2\mathcal{P}}^2)
\epsilon_2\epsilon_2+(3Q_{1\mathcal{P}}^2Q_{2\mathcal{P}}-Q_{2\mathcal{P}}^3)
(\epsilon_1^2-\epsilon_2^2)\right)+\mathcal{O}(\epsilon^3).
\end{align*}

 We have the following mixed energy and momentum type bound.
\begin{lemma}\label{lemma-mod-1}
For $t\in[t_0,t_1]$ with $t_1<0$, it holds that
\begin{align}\notag
a^2+|b|+\|\epsilon\|_{H^{1/2}}^2\lesssim\lambda(|E_0|+|P_0|)
+\mathcal{O}(\lambda^2+a^4+|b|^2+|b||\mathcal{P}|^2).
\end{align}
Here $E_0=E(u_0)$ and $P_0=P(u_0)$ denote the conserved energy and linear momentum of $u=u(t,x)$, respectively.
\end{lemma}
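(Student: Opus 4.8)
\textbf{Proof strategy for Lemma \ref{lemma-mod-1}.}
The plan is to extract the bound from the two conservation laws --- energy and momentum --- evaluated on the decomposition \eqref{mod-decomposition}, combined with the coercivity of the linearized operator $L$ on the orthogonal complement of its kernel. First I would write $u(t)$ through \eqref{mod-decomposition} and expand $E(u)$ and $P(u)$ in powers of $\epsilon$ around the approximate profile $Q_{\mathcal{P}}$. By scaling, $E(u(t)) = \lambda^{-1} E(Q_{\mathcal{P}}+\epsilon)$ (the $H^{1/2}$-critical energy is scale invariant up to the factor $\lambda^{-1}$ in 2D) and similarly $P(u(t)) = \lambda^{-1} P(Q_{\mathcal{P}}+\epsilon)$, so the conservation laws give $E(Q_{\mathcal{P}}+\epsilon) = \lambda E_0$ and $P(Q_{\mathcal{P}}+\epsilon) = \lambda P_0$. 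Expanding each functional, the quadratic-in-$\epsilon$ part of the energy produces the term $\tfrac{1}{2}(L\epsilon,\epsilon)$ (after using the equation $DQ + Q = |Q|Q$ for the ground state to identify the Hessian), while Lemma \ref{lemma-3app-2} supplies $E(Q_{\mathcal{P}}) = e_1 a^2 + \mathcal{O}(a^4+|b|^2+|b||\mathcal{P}|^2)$ and $P(Q_{\mathcal{P}}) = p_1 b + \mathcal{O}(\cdots)$.

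Next I would assemble the mixed functional $\mathcal{I} := E(Q_{\mathcal{P}}+\epsilon) - b \cdot \big(P(Q_{\mathcal{P}}+\epsilon)/p_1\big)\,p_1 + \ldots$ --- more precisely, combine the energy expansion and the momentum expansion with appropriate weights so that the cross terms linear in $\epsilon$ are absorbed by the orthogonality conditions \eqref{mod-orthogonality-condition}. The linear-in-$\epsilon$ terms coming from $E'(Q_{\mathcal{P}})$ and $P'(Q_{\mathcal{P}})$ are, to leading order, pairings of $\epsilon$ against $\Lambda Q$-type and $\nabla Q$-type directions (and the special $\rho$ direction that was built into \eqref{mod-definition-rho} precisely to kill the residual linear term generated by the $a^2$-order correction); the five orthogonality conditions are designed so that all these pairings vanish or are quadratically small. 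What remains is
\[
\lambda\big(E_0 - \textstyle\sum_j c_j b_j P_{0,j}\big) \;=\; e_1 a^2 \;+\; \tfrac{1}{2}(L\epsilon,\epsilon) \;+\; (\text{linear-in-}\epsilon\text{ terms controlled by orthogonality}) \;+\; \mathcal{O}(a^4+|b|^2+|b||\mathcal{P}|^2+\|\epsilon\|_{H^{1/2}}^3),
\]
together with a companion identity from the momentum alone giving $p_1 b = \lambda P_0 + \mathcal{O}(\cdots) + (\text{linear-in-}\epsilon)$.

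The coercivity step is where I would invoke the spectral structure recalled in the proof of Lemma \ref{lemma-3app}: since $\ker L = \mathrm{span}\{[\nabla Q,0]^\top,[0,Q]^\top\}$ and $L_- > 0$ on $Q^\perp$, $L_+ \geq 0$ with a one-dimensional negative direction controlled by the mass, one has the standard coercivity estimate $(L\epsilon,\epsilon) \gtrsim \|\epsilon\|_{H^{1/2}}^2$ provided $\epsilon$ is orthogonal to the kernel directions and to one further direction (the $\Lambda Q$-type direction handling $L_+$'s negative eigenvalue), with a correction absorbed by the almost-critical-mass hypothesis $\big|\int|u|^2 - \int Q^2\big| \lesssim \lambda^2$, which by the mass expansion in Lemma \ref{lemma-3app-2} translates into $\|\epsilon\|_{L^2}^2 \lesssim \lambda^2 + a^4 + |b|^2 + |b||\mathcal{P}|^2$ and pins down the component of $\epsilon$ along the remaining bad direction. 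Feeding this into the identity above, and using $e_1, p_1 > 0$, yields $a^2 + |b| + \|\epsilon\|_{H^{1/2}}^2 \lesssim \lambda(|E_0|+|P_0|) + \mathcal{O}(\lambda^2 + a^4 + |b|^2 + |b||\mathcal{P}|^2)$ after absorbing the higher-order terms on the right (which are lower order under the smallness assumption $a^2+|b|+\|\epsilon\|_{H^s}^2 \ll 1$). The main obstacle I anticipate is bookkeeping the linear-in-$\epsilon$ contributions from the non-radial momentum functional: in the radial case of \cite{GL2022CPDE} the momentum was essentially absent, so here one must check carefully that the extra orthogonality conditions against $\partial_{b_j}Q_{\mathcal{P}}$ and $\partial_{y_j}Q_{\mathcal{P}}$ (with $j=1,2$) genuinely eliminate all the new cross terms $b \cdot (\text{stuff}) \cdot \epsilon$ up to quadratic order, and that the definition of $\rho$ in \eqref{mod-definition-rho}, which now carries $b$-dependent terms, is exactly what is needed to close the identity — this is the step where the non-radial geometry forces genuinely new algebra beyond a "small modification".
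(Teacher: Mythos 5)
Your proposal is correct and follows essentially the same route as the paper: use conservation and scaling to write $E(Q_{\mathcal{P}}+\epsilon)=\lambda E_0$ and $P(Q_{\mathcal{P}}+\epsilon)=\lambda P_0$, expand around $Q_{\mathcal{P}}$ using Lemma \ref{lemma-3app-2} for $E(Q_{\mathcal{P}})\approx e_1a^2$ and $P(Q_{\mathcal{P}})\approx p_1 b$, kill the linear-in-$\epsilon$ terms via the orthogonality conditions \eqref{mod-orthogonality-condition} (including the $\rho$ direction), and close with the coercivity estimate of Lemma \ref{lemma-app-coercivity-estimate}. The paper's own proof is terse (it delegates the energy half to the radial case \cite{GL2022CPDE} and only writes out the momentum identity $\lambda P_0 = P(Q_{\mathcal{P}})+2\Re(\epsilon,-i\nabla Q_{\mathcal{P}})+\Re(\epsilon,-i\nabla\epsilon)$), but the structure you describe is exactly what is being invoked.
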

\begin{proof}
By similar arguments as \cite{GL2022CPDE} and combining the Lemma \ref{lemma-app-coercivity-estimate}, one can obtain the energy estimates. Now we give the momentum estimate. Let $v=Q_{\mathcal{P}}+\epsilon$,
we derive the bound for the boost parameter $b$. Here we observe that
\begin{align}\notag
P(v)=\lambda P(u_0),
\end{align}
by scaling and using the conservation of the linear momentum $P(u(t))=P(u_0)$. Hence, by expansion of $P(u)$, Lemma \ref{lemma-3app-2} and using the orthogonality condition \eqref{mod-orthogonality-condition}, we obtain
\begin{align*}
\lambda P_0=P(v)&=P(Q_{\mathcal{P}})+2\Re(\epsilon,-i\nabla Q_{\mathcal{P}})+\Re(\epsilon,-i\nabla\epsilon)\\
&=p_1 b+\mathcal{O}(a^4+|b|^2+|b||\mathcal{P}|^2+\|\epsilon\|_{H^{1/2}}^2),
\end{align*}
with the constant $p_1=2\int_{\mathbb{R}^2}(L_{-}S_{0,1}\cdot S_{0,1})>0$. Now we complete the proof of this Lemma.
\end{proof}

\subsection{Modulation Estimates}
We continue with estimating the modulation parameters. To this end, we define the vector-valued function
\begin{align*}
\mathbf{Mod}(t):=\left(a_s+\frac{1}{2}a^2,\tilde{\gamma}_s,\frac{\lambda_s}{\lambda}+a,
\frac{\alpha_s}{\lambda}-b,b_s+ab\right).
\end{align*}
We have the following result.
\begin{lemma}\label{lemma-mod-2}
For $t\in[t_0,t_1]$ with $t_1<0$, we have the bound
\begin{align*}
|\mathbf{Mod}(t)|\lesssim\lambda^2+a^4+|b|^2+|b||\mathcal{P}|^2+|\mathcal{P}|^2\|\epsilon\|_2+
\|\epsilon\|_2^2+\|\epsilon\|_{H^{1/2}}^3.
\end{align*}
Furthermore, we have the improved bound
\begin{align}\notag
\left|\frac{\lambda_s}{\lambda}+a\right|\lesssim a^5 +|b||\mathcal{P}|^2+|\mathcal{P}|^2\|\epsilon\|_2
+\|\epsilon\|_2^2+\|\epsilon\|_{H^{1/2}}^3.
\end{align}
\end{lemma}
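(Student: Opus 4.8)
The plan is to project the modulation system \eqref{mod-system-1}--\eqref{mod-system-2} onto the directions appearing in the orthogonality conditions \eqref{mod-orthogonality-condition} and use those relations to solve for the components of $\mathbf{Mod}(t)$. Concretely, I would differentiate each of the five orthogonality relations in \eqref{mod-orthogonality-condition} with respect to $s$. Each differentiated relation produces a term $\big(\partial_s\epsilon_2, (\cdot)_{1\mathcal P}\big)-\big(\partial_s\epsilon_1,(\cdot)_{2\mathcal P}\big)$ plus terms where the $s$-derivative falls on the profile (these carry factors of $\mathbf{Mod}$ via $\partial_s Q_{\mathcal P}=a_s\partial_aQ_{\mathcal P}+b_s\cdot\partial_bQ_{\mathcal P}$, which are $\mathcal O(|\mathcal P|)$-small once we use the frozen laws $a_s\approx -a^2/2$, $b_s\approx -ab$). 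Then I substitute $\partial_s\epsilon_1,\partial_s\epsilon_2$ from \eqref{mod-system-1}--\eqref{mod-system-2}; the principal contribution is the pairing of the $\mathbf{Mod}$-coefficient terms on the left-hand side of the system against the fixed profiles, which yields a near-diagonal linear system for $\mathbf{Mod}(t)$ with an invertible matrix (invertibility at $\mathcal P=0$ comes from the non-degeneracy built into the choice of $\Lambda Q$, $\partial_a Q$, $\partial_{b_j}Q$, $\partial_{y_j}Q$, $\rho$, exactly as in \cite{GL2022CPDE}; here one also uses $L_-S_{1,0}=\Lambda Q$, $(S_{1,0},S_{1,0})=-2(T_{2,0},Q)$ and the symmetry table in Remark \ref{remark:1} to see the cross terms vanish or are lower order).

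The right-hand side of the resulting system collects three types of error. First, the genuine profile error $\Phi_{\mathcal P}$ paired against the profiles, which by Lemma \ref{lemma-3app} is $\mathcal O(a^5+|b|^2|\mathcal P|)$; in the crude bound this is absorbed into $\lambda^2+a^4+|b|^2+|b||\mathcal P|^2$ after also invoking the almost-critical-mass hypothesis $|\int|u|^2-\int Q^2|\lesssim\lambda^2$ together with Lemma \ref{lemma-3app-2} to trade $a^4+|b|^2$ against $\lambda^2$ plus $\|\epsilon\|_2^2$. Second, the terms linear in $\epsilon$ coming from $M_\pm(\epsilon)$, $a\Lambda\epsilon$, $b\cdot\nabla\epsilon$ paired against the profiles: those profiles are Schwartz (the decay bounds $\langle x\rangle^{-3}$ from Lemma \ref{lemma-3app}), so integrating by parts moves $D$ and $\nabla$ onto the profile and these contribute $|\mathcal P|^2\|\epsilon\|_2$ — crucially, the terms that would only be $\mathcal O(\|\epsilon\|_2)$ are killed because $M_\pm - L_\pm = \mathcal O(|\mathcal P|)$ and because the $L_\pm$-parts, after moving the operator onto the profile, hit exactly the orthogonality-defining directions and vanish. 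Third, the quadratic-and-higher remainders $R_1(\epsilon),R_2(\epsilon)$, which by their explicit form and Lemma \ref{lemma:fcr}/Sobolev embedding in $\mathbb R^2$ are $\mathcal O(\|\epsilon\|_{H^{1/2}}^3)$ when tested against Schwartz functions (there is a slack of two derivatives to spend). Collecting, one gets the stated crude bound for $|\mathbf{Mod}(t)|$.

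For the improved bound on $\big|\frac{\lambda_s}{\lambda}+a\big|$, the point is that the scaling direction $\Lambda Q$ is paired, via the fifth orthogonality condition involving $\rho$, against a quantity that has already been engineered (that is the whole purpose of the specific definition \eqref{mod-definition-rho} of $\rho_2$ with its $a$- and $b$-dependent right-hand side) so that the $\mathcal O(\lambda^2)$ and $\mathcal O(a^4+|b|^2+|b||\mathcal P|^2)$ contributions cancel to leading order, leaving only $\mathcal O(a^5+|b||\mathcal P|^2)$. More precisely, I would use the conservation/virial-type identity for the energy functional: differentiating $E(Q_{\mathcal P}+\epsilon)$, or equivalently pairing \eqref{mod-system-1}--\eqref{mod-system-2} against $(\Lambda Q_{1\mathcal P},\Lambda Q_{2\mathcal P})$ and then correcting by $\rho$, shows that the dangerous $\lambda^2$-size term is exactly the one removed by the $\rho$-orthogonality, and what survives is controlled by $E_0$-independent quantities of size $a^5+|b||\mathcal P|^2+|\mathcal P|^2\|\epsilon\|_2+\|\epsilon\|_2^2+\|\epsilon\|_{H^{1/2}}^3$. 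This mirrors the ``$\lambda_s/\lambda+a$ is better'' mechanism of \cite{KLR2013ARMA,GL2022CPDE}.

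The main obstacle is the second type of error term: in the non-radial setting one can no longer invoke radial Strichartz-type control of $\|\epsilon\|_{H^s}$ by a weighted $L^q_tH^1_x$ norm (comment 2 in the introduction), so one must be careful that every $\epsilon$-linear contribution is genuinely paired against a \emph{rapidly decaying, smooth} profile and that the derivative-loss is shifted entirely onto that profile, so the bound only ever needs $\|\epsilon\|_{L^2}$ (or at worst $\|\epsilon\|_{H^{1/2}}$ in the cubic terms) and never a norm above $H^{1/2}$. Keeping track of which cross-pairings vanish by the symmetry structure of Remark \ref{remark:1} — so that no spurious $\mathcal O(\|\epsilon\|_2)$ or $\mathcal O(\lambda)$ term appears on the right — is the delicate bookkeeping step; everything else is a routine, if lengthy, expansion of the matrix system and its inverse.
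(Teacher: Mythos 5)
Your proposal follows the same strategy as the paper: you project the modulation system onto the directions in the orthogonality conditions (your "differentiate \eqref{mod-orthogonality-condition} in $s$ and substitute from \eqref{mod-system-1}--\eqref{mod-system-2}" is algebraically the same as the paper's "multiply \eqref{mod-system-1}--\eqref{mod-system-2} by the profile directions and add"), you control the $\epsilon$-linear terms through exactly the cancellations isolated in Lemma~\ref{lemma-app-mod-estimate}, you use the profile error bound of Lemma~\ref{lemma-3app} together with the almost-critical-mass hypothesis and Lemma~\ref{lemma-3app-2} for the mass expansion, and you invert the resulting near-diagonal $7\times 7$ system by a Neumann/Taylor argument; the improved bound for $\lambda_s/\lambda+a$ via the tailor-made $\rho$-orthogonality of \eqref{mod-definition-rho} is likewise the mechanism the paper defers to \cite[Lemma~4.2]{GL2022CPDE}. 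One small attribution slip: you claim $R_1,R_2$ tested against Schwartz profiles give only $\mathcal{O}(\|\epsilon\|_{H^{1/2}}^3)$, but their leading parts are \emph{quadratic} in $\epsilon$ (e.g.\ $|Q_\mathcal{P}|^{-1}Q_{1\mathcal{P}}|\epsilon|^2$), so they actually contribute $\mathcal{O}(\|\epsilon\|_2^2)$ — which is larger than $\|\epsilon\|_{H^{1/2}}^3$ for small $\epsilon$ — with only the $\mathcal{O}(\epsilon^3)$ remainders giving the cubic $H^{1/2}$ contribution; since $\|\epsilon\|_2^2$ is already present in the claimed bound this does not affect the result, but the error budget should be credited correctly.
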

\begin{proof}
We divide the proof into the following four steps, where we also make use of the estimates \eqref{app-B-1}-\eqref{app-B-5}, which are shown in Lemma \ref{lemma-app-mod-estimate}. Now, we recall that
\begin{align*}\notag
\Lambda Q_{1\mathcal{P}}=\Lambda Q+\mathcal{O}(|\mathcal{P}|^2),\ \Lambda Q_{2\mathcal{P}}=a\Lambda S_{1,0}+b\cdot\Lambda S_{0,1}+\mathcal{O}(|\mathcal{P}|^2),\\
\partial_aQ_{1\mathcal{P}}=2aT_{2,0}+b\cdot T_{1,1}+\mathcal{O}(|\mathcal{P}|^2),\ \partial_aQ_{2\mathcal{P}}=S_{1,0}+\mathcal{O}(|\mathcal{P}|^2),\\
\partial_{j} Q_{1\mathcal{P}}=\partial_{j} Q+\mathcal{O}(|\mathcal{P}|^2),\
\partial_{j} Q_{2\mathcal{P}}=a\partial_{j} S_{1,0}+\partial_{j}( b\cdot S_{0,1})+\mathcal{O}(|\mathcal{P}|^2),\\
\partial_{b_j}Q_{1\mathcal{P}}=aT_{1,1,j}+2b_jT_{0,2,j}+\mathcal{O}(|\mathcal{P}|^2),\ \partial_{b_j}Q_{2\mathcal{P}}=S_{0,1,j}+\mathcal{O}(|\mathcal{P}|^2),
\end{align*}
where $j=1,2$.

{\bf Step 1: Law for $b$}. We multiply both sides of the equation \eqref{mod-system-1} and \eqref{mod-system-2} by $-\partial_{j} Q_{2\mathcal{P}}$ and $\partial_{j} Q_{1\mathcal{P}}$, respectively. Adding this and using \eqref{app-B-4} yields, after some calculation (also using the condition \eqref{mod-orthogonality-condition}),
\begin{align*}
&\left(a_s+\frac{1}{2}a^2\right)[(\partial_aQ_{1\mathcal{P}},-\partial_{j} Q_{2\mathcal{P}})+
(\partial_aQ_{2\mathcal{P}},\partial_{j} Q_{1\mathcal{P}})]
+\sum_{j=1}^2((b_j)_s+ab_j)[(\partial_{b_j}Q_{1\mathcal{P}},-\partial_{j} Q_{2\mathcal{P}})\\
&+(\partial_{b_j}Q_{2\mathcal{P}},\partial_{j} Q_{1\mathcal{P}})]
+[(\partial_s\epsilon_1,-\partial_{j} Q_{2\mathcal{P}})+(\partial_s\epsilon_2,\partial_{j} Q_{1\mathcal{P}})]\\
=&\left(\frac{\lambda_s}{\lambda}+a\right)[(\Lambda Q_{1\mathcal{P}}+\Lambda\epsilon_1,-\partial_{j} Q_{2\mathcal{P}})+
(\Lambda Q_{2\mathcal{P}}+\Lambda\epsilon_2,\partial_{j} Q_{1\mathcal{P}})]\\
&+\left[\left(\left(\frac{\alpha_s}{\lambda}-b\right)\cdot(\nabla Q_{1\mathcal{P}}+\nabla\epsilon_1),-\partial_{j} Q_{2\mathcal{P}}\right)+\left(\left(\frac{\alpha_s}{\lambda}-b\right)\cdot(\nabla Q_{2\mathcal{P}}+\nabla\epsilon_2),\partial_{j} Q_{1\mathcal{P}}\right)\right]\\
&+\tilde{\gamma}_s[(Q_{2\mathcal{P}}+\epsilon_2,-\partial_{j} Q_{2\mathcal{P}})
-(Q_{1\mathcal{P}}+\epsilon_1,\partial_{j} Q_{1\mathcal{P}})]
+(R_2(\epsilon),\partial_{j} Q_{2\mathcal{P}})+(R_1(\epsilon),\partial_{j} Q_{1\mathcal{P}})\\
&-(\Im(\Phi_{\mathcal{P}}), \partial_{j} Q_{2\mathcal{P}})
+(\Re(\Phi_{\mathcal{P}}),\partial_{j} Q_{1\mathcal{P}})
+\mathcal{O}(\mathcal{P}^2\|\epsilon\|_2).
\end{align*}
Hence, we have
\begin{align*}
&\left(a_s+\frac{1}{2}a^2\right)[(S_{1,0},\partial_{j} Q)+\mathcal{O}(\mathcal{P}^2)]
+[(b_s+ab)\cdot(S_{0,1},\partial_{j} Q)+\mathcal{O}(\mathcal{P}^2)]\\
=&(R_2(\epsilon),\partial_{j} Q_{2\mathcal{P}})+(R_1(\epsilon),\partial_{j} Q_{1\mathcal{P}})
+\mathcal{O}\left((|\mathcal{P}|^2
+|\mathbf{mod}(t)|)\|\epsilon\|_2+a^4+|b|^2+|b||\mathcal{P}|^2\right).
\end{align*}
Therefore, we deduce that
\begin{align*}
&(b_s+ab)\left[-\frac{1}{2}p_1+\mathcal{O}(\mathcal{P}^2)\right]\\
=&(R_2(\epsilon),\nabla Q_{2\mathcal{P}})+(R_1(\epsilon),\nabla Q_{1\mathcal{P}})
+\mathcal{O}\left((\mathcal{P}^2
+|\mathbf{mod}(t)|)\|\epsilon\|_2+a^4+|b|^2+|b||\mathcal{P}|^2\right).
\end{align*}

{\bf Step 2: Law for $\alpha$}. We multiply both sides of the equation \eqref{mod-system-1} and \eqref{mod-system-2} by $-\partial_{b_j} Q_{2\mathcal{P}}$ and $\partial_{b_j} Q_{1\mathcal{P}}$, where $j=1,2$ respectively. Adding this and using \eqref{app-B-5} yields, after some calculation (also using the condition \eqref{mod-orthogonality-condition}), 
\begin{align*}
&\left(a_s+\frac{1}{2}a^2\right)[(\partial_aQ_{1\mathcal{P}},-\partial_{b_j} Q_{2\mathcal{P}})+
(\partial_aQ_{2\mathcal{P}},\partial_{b_j} Q_{1\mathcal{P}})]
+((b_j)_s+ab_j)[(\partial_{b_j}Q_{1\mathcal{P}},-\partial_{b_j} Q_{2\mathcal{P}})\\
&+(\partial_{b_j}Q_{2\mathcal{P}},\partial_{b_j} Q_{1\mathcal{P}})]
+[(\partial_s\epsilon_1,-\partial_b Q_{2\mathcal{P}})+(\partial_s\epsilon_2,\partial_{b_j} Q_{1\mathcal{P}})]\\
=&\left(\frac{\lambda_s}{\lambda}+a\right)[(\Lambda Q_{1\mathcal{P}}+\Lambda\epsilon_1,-\partial_{b_j} Q_{2\mathcal{P}})+
(\Lambda Q_{2\mathcal{P}}+\Lambda\epsilon_2,\partial_{b_j} Q_{1\mathcal{P}})]\\
&+\left(\frac{\alpha_s}{\lambda}-b\right)\cdot[(\nabla Q_{1\mathcal{P}}+\nabla\epsilon_1,-\partial_{b_j} Q_{2\mathcal{P}})+(\nabla Q_{2\mathcal{P}}+\nabla\epsilon_2,\partial_{b_j} Q_{1\mathcal{P}})]
+\tilde{\gamma}_s[(Q_{2\mathcal{P}}+\epsilon_2,-\partial_{b_j} Q_{2\mathcal{P}})\\
&-(Q_{1\mathcal{P}}+\epsilon_1,\partial_{b_j} Q_{1\mathcal{P}})]
+(R_2(\epsilon),\partial_{b_j} Q_{2\mathcal{P}})+(R_1(\epsilon),\partial_{b_j} Q_{1\mathcal{P}})
-(\Im(\Phi_{\mathcal{P}}), \partial_{b_j} Q_{2\mathcal{P}})\\
&+(\Re(\Phi_{\mathcal{P}}),\partial_{b_j} Q_{1\mathcal{P}})
+\mathcal{O}(\mathcal{P}^2\|\epsilon\|_2).
\end{align*}
Hence, we have
\begin{align*}
&\left(a_s+\frac{1}{2}a^2\right)\mathcal{O}(\mathcal{P})\\
=&\left(\frac{\lambda_s}{\lambda}+a\right)[-(\Lambda Q,S_{0,1,j})+\mathcal{O}(\mathcal{P}^2)]
+\left(\frac{\alpha_s}{\lambda}-b\right)\cdot[(\nabla Q,S_{0,1,j})+\mathcal{O}(|\mathcal{P}|^2)]\\
&+(R_2(\epsilon),\partial_b Q_{2\mathcal{P}})+(R_1(\epsilon),\partial_b Q_{1\mathcal{P}})+\mathcal{O}(\mathcal{P}^2\|\epsilon\|_2).
\end{align*}
Therefore, we deduce that
\begin{align*}
&\left(a_s+\frac{1}{2}a^2\right)\mathcal{O}(\mathcal{P})
+\left(\frac{\alpha_s}{\lambda}-b\right)[p_1 +\mathcal{O}(\mathcal{P}^2)]\\
=&(R_2(\epsilon),\partial_{b_j} Q_{2\mathcal{P}})+(R_1(\epsilon),\partial_{b_j} Q_{1\mathcal{P}})
+\mathcal{O}\left((|\mathcal{P}|^2
+|\mathbf{mod}(t)|)\|\epsilon\|_2+a^4+|b|^2+|b||\mathcal{P}|^2\right),
\end{align*}
where we used $(\Lambda Q,S_{0,1})=0$.

{\bf Step~3:} By using the estimates \eqref{app-B-1}, \eqref{app-B-2} and \eqref{app-B-3} and the similar methods in \cite[Lemma 4.2]{GL2022CPDE}, we can obtain the laws for $a$, $\lambda$ and $\gamma$. Here we omit it.

{\bf Step 4: Conclusion}.  We collect the previous equation and estimate the nonlinear terms in $\epsilon$ by Sobolev inequalities. This gives us
\begin{align*}
(A+B)\mathbf{Mod}(t)=&\mathcal{O}\big((\mathcal{P}^2+|\mathbf{Mod}(t)|)\|\epsilon_2\|
+\|\epsilon\|_2^2+\|\epsilon\|_{H^{1/2}}^3\\
&+|\|u\|_2^2-\|Q\|_2^2|+a^4+|b|^2+|b||\mathcal{P}|^2\big).
\end{align*}
Here $A=O(1)$ is invertible $7\times7$-matrix, and $B=\mathcal{O}(\mathcal{P})$ is some $7\times7$-matrix that is polynomial in $\mathcal{P}=(a,b)$. For $|\mathcal{P}|\ll1$, we can thus invert $A+B$ by Taylor expansion and derive the estimate for $\mathbf{Mod}(t)$ stated in this Lemma.
\end{proof}

\subsection{Refined Energy bounds}
In this section, we establish a refined energy estimate, which will be a key ingredient in the compactness argument to construct the ground state mass blowup solutions.

Let $u=u(t,x)$ be a solution \eqref{equ-1-hf-2} on the time interval $[t_0,0)$ and suppose that $\tilde{Q}$ is an approximate solution to \eqref{equ-1-hf-2} such that
\begin{equation*}
i\tilde{Q}_t-D\tilde{Q}+|\tilde{Q}|\tilde{Q}=\psi,
\end{equation*}
with the priori bounds
\begin{align*}
\|\tilde{Q}\|_2\lesssim 1,\ \|D^{\frac{1}{2}}\tilde{Q}\|_2\lesssim \lambda^{-\frac{1}{2}},\ \|\nabla \tilde{Q}\|_2\lesssim \lambda^{-1}.
\end{align*}
We decompose $u=\tilde{Q}+\tilde{\epsilon}$, and hence $\tilde{\epsilon}$ satisfies
\begin{equation*}
i\tilde{\epsilon}_t-D\tilde{\epsilon}+(|u|u-|\tilde{Q}|\tilde{Q})=-\psi,
\end{equation*}
where we assume the priori estimate
\begin{align*}
\|D^{\frac{1}{2}}\tilde{\epsilon}\|_2\lesssim \lambda^{\frac{1}{2}},\ \|\tilde{\epsilon}\|_2\lesssim \lambda,
\end{align*}
as well as
\begin{align*}
|\lambda_t+a|\lesssim\lambda^2,\ a\lesssim\lambda^{\frac{1}{2}},\ |a_t|\lesssim1,\ |\alpha_t|\lesssim\lambda.
\end{align*}

Next, Let $\phi:\mathbb{R}^2\rightarrow\mathbb{R}$ be a smooth and radial function with the following properties
\begin{align*}
\phi^\prime(x)=\begin{cases}x\ \ &\text{for}\ \ 0\leq x\leq1,\\
3-e^{-|x|}\ &\text{for}\ x\geq2,
\end{cases}
\end{align*}
and the convexity condition
\begin{align*}
\phi''(x)\geq0\ \text{for}\ x\geq0.
\end{align*}
Furthermore, we denote
\begin{align}\notag
F(u)=\frac{1}{3}|u|^{3},\ f(u)=|u|u,\ F'(u)\cdot h=\Re(f(u)\bar{h}).
\end{align}
Let $A>0$ be a large constant and define the quantity
\begin{align*}
J_A(u):=&\frac{1}{2}\int|D^{\frac{1}{2}}\tilde{\epsilon}|^2
+\frac{1}{2}\int\frac{|\tilde{\epsilon}|^2}{\lambda}
-\int[F(u)-F(\tilde{Q})-F'(\tilde{Q})\cdot\tilde{\epsilon}]\\
&+\frac{a}{2}\Im\left(\int A\nabla\phi\left(\frac{x-\alpha}{A\lambda}\right)\cdot(\nabla\tilde{\epsilon})\bar{\tilde{\epsilon}}\right).
\end{align*}
Our strategy will be to use the preceding functional to bootstrap control over $\|\tilde{\epsilon}\|_{H^{s}}$.
\begin{lemma}\label{lemma-energy-estimate}
(Localized energy estimate) Let $J_A$ be as above. Then we have

\begin{align*}
\frac{dJ_A}{dt}
=&\Im\left(\psi,D\tilde{\epsilon}+\frac{1}{\lambda}\tilde{\epsilon}-f'(\tilde{Q})\tilde{\epsilon}\right)
-\frac{1}{\lambda}(\tilde{\epsilon},f'(\tilde{Q})\tilde{\epsilon})
-\Re\left(\partial_t\tilde{Q},{(f(u)-f(\tilde{Q})-f'(\tilde{Q})\cdot\tilde{\epsilon})}\right)\\
&+\frac{a}{2\lambda}\int\frac{|\tilde{\epsilon}|^2}{\lambda}
-\frac{2a}{\lambda}\int_0^{+\infty}\sqrt{s}
\int_{\mathbb{R}^2}\Delta\phi\left(\frac{x-\alpha}{A\lambda}\right)|\nabla\tilde{\epsilon}_s|^2dxds\\
&+\frac{a}{2A^2\lambda^3}\int_0^{+\infty}\sqrt{s}\int_{\mathbb{R}^2}
\Delta^2\phi\left(\frac{x-\alpha}{A\lambda}\right)|\tilde{\epsilon}_s|^2dxds\\
&+\Im\left(\int\left[iaA\nabla\phi\left(\frac{x-\alpha}{A\lambda}\right)\cdot\nabla\psi
+i\frac{a}{2\lambda}\Delta\psi\left(\frac{x-\alpha}{A\lambda}\right)\psi\right]\bar{\tilde{\epsilon}}\right)\\
&+a\Re\left(\int A\nabla\phi\left(\frac{x-\alpha}{A\lambda}\right)
\left(\frac{3}{4}|\tilde{Q}|^{-1}|\tilde{\epsilon}|^2w\tilde{Q}
+\frac{1}{4}|\tilde{Q}|^{-1}\tilde{\epsilon}^2\bar{\tilde{Q}}\right)\cdot\overline{\nabla \tilde{Q}}\right)\\
&+\mathcal{O}(\lambda^2\|\psi\|_2+\|\tilde{\epsilon}\|_{H^{1/2}}^2
+\lambda^{\frac{1}{2}}\|\tilde{\epsilon}\|_{H^{1/2}}^2),
\end{align*}
where $\tilde{\epsilon}_s:=\sqrt{\frac{2}{\pi}}\frac{1}{-\Delta+s}\tilde{\epsilon}$ with $s>0$.
\end{lemma}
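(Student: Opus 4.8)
The plan is to differentiate $J_A(u)$ term by term in $t$, substitute the equation $i\tilde\epsilon_t=D\tilde\epsilon-(f(u)-f(\tilde Q))-\psi$ for $\partial_t\tilde\epsilon$, and regroup so that the ``Hamiltonian'' pieces cancel each other. For the first three summands of $J_A$: write $\tfrac12\int|D^{1/2}\tilde\epsilon|^2=\tfrac12(D\tilde\epsilon,\tilde\epsilon)$, whose derivative is $\Re(D\tilde\epsilon,\tilde\epsilon_t)$; compute $\frac{d}{dt}\big(\tfrac12\int\tfrac{|\tilde\epsilon|^2}{\lambda}\big)=\tfrac1\lambda\Re(\tilde\epsilon,\tilde\epsilon_t)-\tfrac{\lambda_t}{2\lambda^2}\int|\tilde\epsilon|^2$ and use $\lambda_t=-a+\mathcal O(\lambda^2)$ together with $\|\tilde\epsilon\|_2\lesssim\lambda$ to turn the last term into $\tfrac{a}{2\lambda}\int\tfrac{|\tilde\epsilon|^2}{\lambda}+\mathcal O(\lambda^2)$; and differentiate the potential by the chain rule $\frac{d}{dt}F(w)=\Re(f(w),w_t)$ with $u_t=\partial_t\tilde Q+\tilde\epsilon_t$. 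Using the $\Re$-symmetry $\Re(g,f'(\tilde Q)h)=\Re(f'(\tilde Q)g,h)$ of the linearized nonlinearity, the $\partial_t\tilde Q$-pieces collapse to $-\Re\big(\partial_t\tilde Q,\,f(u)-f(\tilde Q)-f'(\tilde Q)\tilde\epsilon\big)$, while all $\tilde\epsilon_t$-pieces collapse to $\Re(W,\tilde\epsilon_t)$ with $W:=D\tilde\epsilon+\tfrac1\lambda\tilde\epsilon-(f(u)-f(\tilde Q))$.

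Next, substituting $\tilde\epsilon_t=-i\big(D\tilde\epsilon-(f(u)-f(\tilde Q))-\psi\big)$ into $\Re(W,\tilde\epsilon_t)$ and using that $(\tilde\epsilon,D\tilde\epsilon)$ is real while $\Re(Z,-iZ)=0$, almost every term cancels, leaving $\Im\big(\psi,\,D\tilde\epsilon+\tfrac1\lambda\tilde\epsilon-(f(u)-f(\tilde Q))\big)-\tfrac1\lambda\Im\big(\tilde\epsilon,\,f(u)-f(\tilde Q)\big)$. Writing $f(u)-f(\tilde Q)=f'(\tilde Q)\tilde\epsilon+N(\tilde\epsilon)$ with $N$ quadratic-and-higher in $\tilde\epsilon$, the $f'(\tilde Q)\tilde\epsilon$-contribution gives exactly the first two displayed terms $\Im\big(\psi,\,D\tilde\epsilon+\tfrac1\lambda\tilde\epsilon-f'(\tilde Q)\tilde\epsilon\big)$ and $-\tfrac1\lambda(\tilde\epsilon,f'(\tilde Q)\tilde\epsilon)$; the cubic term obeys $\tfrac1\lambda\Im(\tilde\epsilon,N(\tilde\epsilon))\lesssim\tfrac1\lambda\|\tilde\epsilon\|_{L^3}^3\lesssim\tfrac1\lambda\|\tilde\epsilon\|_2\|\tilde\epsilon\|_{\dot H^{1/2}}^2\lesssim\|\tilde\epsilon\|_{H^{1/2}}^2$ by Gagliardo--Nirenberg and $\|\tilde\epsilon\|_2\lesssim\lambda$, and the $\psi$-paired remainder $\Im(\psi,N(\tilde\epsilon))$ is absorbed into $\mathcal O(\lambda^2\|\psi\|_2+\|\tilde\epsilon\|_{H^{1/2}}^2)$ using H\"older, Sobolev and the smallness of $\psi$ in the relevant norms furnished by Lemma \ref{lemma-3app}.

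The bulk of the work is $\frac{d}{dt}$ of the virial correction $\tfrac a2\Im\big(\int A\nabla\phi(\tfrac{x-\alpha}{A\lambda})\cdot(\nabla\tilde\epsilon)\bar{\tilde\epsilon}\big)$. Terms in which $\partial_t$ falls on $a,\lambda$ or $\alpha$ are error: since $|a_t|\lesssim1$, $|\lambda_t|\lesssim\lambda^{1/2}$, $|\alpha_t|\lesssim\lambda$, $\nabla\phi$ and $\nabla^2\phi$ are bounded, and a first-order bilinear form satisfies $|\Im\int b(x)\cdot(\nabla\tilde\epsilon)\bar{\tilde\epsilon}|\lesssim\|b\|_{W^{1,\infty}}\|\tilde\epsilon\|_{\dot H^{1/2}}^2$, these are all $\mathcal O(\|\tilde\epsilon\|_{H^{1/2}}^2)$ (with $A$ fixed). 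The principal part comes from inserting $\partial_t\tilde\epsilon=-i(D\tilde\epsilon-(f(u)-f(\tilde Q))-\psi)$ and integrating by parts against the potential $\Psi:=A^2\lambda\,\phi(\tfrac{x-\alpha}{A\lambda})$, for which $\nabla\Psi=A\,(\nabla\phi)(\tfrac{x-\alpha}{A\lambda})$, $\nabla^2\Psi=\lambda^{-1}(\nabla^2\phi)(\tfrac{x-\alpha}{A\lambda})$ and $\Delta^2\Psi=(A^2\lambda^3)^{-1}(\Delta^2\phi)(\tfrac{x-\alpha}{A\lambda})$: the $-\psi$ part, after one integration by parts transferring a derivative onto $\nabla\phi$, yields precisely the $\psi$-virial line of the formula, and the $-(f(u)-f(\tilde Q))$ part, after moving the derivative onto $\tilde Q$ and keeping the leading quadratic-in-$\tilde\epsilon$ piece of the nonlinearity, yields the line $a\Re\int A\nabla\phi\big(\tfrac34|\tilde Q|^{-1}|\tilde\epsilon|^2\tilde Q+\tfrac14|\tilde Q|^{-1}\tilde\epsilon^2\bar{\tilde Q}\big)\cdot\overline{\nabla\tilde Q}$, the lower-order remainder being $\mathcal O(\lambda^{1/2}\|\tilde\epsilon\|_{H^{1/2}}^2)$ thanks to $a\lesssim\lambda^{1/2}$.

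The crux is the $D\tilde\epsilon$-part of the virial derivative, where the nonlocal operator $D=\sqrt{-\Delta}$ must be commuted with the variable-coefficient vector field $\nabla\Psi$. Here I would use the subordination identity $D=\tfrac2\pi\int_0^\infty\sqrt s\,\dfrac{-\Delta}{(-\Delta+s)^2}\,ds$ together with the resolvent localizations $\tilde\epsilon_s=\sqrt{2/\pi}\,(-\Delta+s)^{-1}\tilde\epsilon$, which in particular give $\|D^{1/2}\tilde\epsilon\|_2^2=\int_0^\infty\sqrt s\,\|\nabla\tilde\epsilon_s\|_2^2\,ds$ and reduce the nonlocal Morawetz commutator to an $s$-superposition of the classical local Morawetz commutator for $-\Delta$ acting on $\tilde\epsilon_s$. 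Performing that local computation---two integrations by parts, the convexity $\phi''\ge0$, and the scaling of $\Psi$ recorded above---and integrating back in $s$ against $\sqrt s$ yields exactly $-\tfrac{2a}{\lambda}\int_0^\infty\sqrt s\int\Delta\phi(\tfrac{x-\alpha}{A\lambda})|\nabla\tilde\epsilon_s|^2\,dx\,ds$ and $\tfrac{a}{2A^2\lambda^3}\int_0^\infty\sqrt s\int\Delta^2\phi(\tfrac{x-\alpha}{A\lambda})|\tilde\epsilon_s|^2\,dx\,ds$. I expect this step---rigorously justifying the $s$-integral representation and its interchange with the spatial integrations, tracking the chain-rule factors, and dominating the cross terms between the localization $\tilde\epsilon_s$ and the profile $|\tilde Q|$ that appear when $D$ also meets the nonlinear remainder---to be the main obstacle; the rest is routine bookkeeping against the a priori bounds $\|\tilde\epsilon\|_2\lesssim\lambda$, $\|\tilde\epsilon\|_{H^{1/2}}\lesssim\lambda^{1/2}$, $a\lesssim\lambda^{1/2}$, $|\lambda_t+a|\lesssim\lambda^2$, and the fractional Leibniz/chain rules of Section 2.
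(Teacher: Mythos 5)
Your proposal is essentially correct and reconstructs the argument along the same lines the paper indicates. The paper itself does not spell out a proof here — it only says the computation is ``similar to the proof of Lemma 5.1 in [GL2022CPDE], with a small modification due to the translation $\alpha$'' — and your outline contains exactly the ingredients that proof is built on: term-by-term differentiation of $J_A$, substitution of the equation $i\tilde\epsilon_t = D\tilde\epsilon - (f(u)-f(\tilde Q)) - \psi$ and the algebraic Hamiltonian cancellations for $\Re(W,\tilde\epsilon_t)$, the $\mathbb R$-self-adjointness of $f'(\tilde Q)$, the a priori bounds to absorb the $a_t$, $\lambda_t$, $\alpha_t$ contributions from the virial weight into the error, and above all the resolvent/subordination representation of $D$ via $\tilde\epsilon_s=\sqrt{2/\pi}\,(-\Delta+s)^{-1}\tilde\epsilon$, which reduces the nonlocal virial commutator to an $s$-superposition of the classical local Morawetz identity and produces precisely the two displayed terms weighted by $\Delta\phi$ and $\Delta^2\phi$. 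Your Gagliardo--Nirenberg bound $\lambda^{-1}\|\tilde\epsilon\|_{L^3}^3\lesssim\lambda^{-1}\|\tilde\epsilon\|_2\|\tilde\epsilon\|_{\dot H^{1/2}}^2\lesssim\|\tilde\epsilon\|_{H^{1/2}}^2$ for the cubic remainder is the correct one in $d=2$. A minor remark: in the paper's statement the term $i\tfrac{a}{2\lambda}\Delta\psi\big(\tfrac{x-\alpha}{A\lambda}\big)\psi$ is a typo for $i\tfrac{a}{2\lambda}\Delta\phi\big(\tfrac{x-\alpha}{A\lambda}\big)\psi$, consistent with the scaling $\Delta\Psi=\lambda^{-1}(\Delta\phi)\big(\tfrac{x-\alpha}{A\lambda}\big)$ you compute for $\Psi=A^2\lambda\,\phi\big(\tfrac{x-\alpha}{A\lambda}\big)$, and your integration-by-parts derivation of the $\psi$-virial line gives exactly that corrected term.
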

\begin{proof} {The proof is similar to the proof of Lemma 5.1 in \cite{GL2022CPDE}. A small modification due to the presence of the translation $\alpha$ is needed. For this we omit the details of the proof.} 
\end{proof}

\subsection{Backwards Propagation of Smallness}
We now apply the energy estimate of the previous subsection in order to establish a bootstrap argument that will be needed in the construction of the ground state mass blowup solution.

Let $u=u(t,x)$ be a solution to \eqref{equ-1-hf-2} defined in $[t_0,0)$. Assume that $t_0<t_1<0$ and suppose that $u$ admits on $[t_0,t_1]$ a geometrical decomposition of the form
\begin{align*}
u(t,x)=\frac{1}{\lambda(t)}[Q_{\mathcal{P}(t)}+\epsilon]
\left(s,\frac{x-\alpha(t)}{\lambda(t)}\right)
e^{i\gamma(t)},
\end{align*}
where $\epsilon=\epsilon_1+i\epsilon_2$ satisfies the orthogonality condition \eqref{mod-orthogonality-condition} and $a^2+|b|+\|\epsilon\|_{H^{1/2}}^2\ll1$ holds. We set
\begin{align*}
\tilde{\epsilon}(t,x)=\frac{1}{\lambda(t)}\epsilon
\left(s,\frac{x-\alpha(t)}{\lambda(t)}\right)
e^{i\gamma(t)}.
\end{align*}
Suppose that the energy satisfies $E_0=E(u_0)>0$ and define the constant
\begin{align}\label{back-define-1}
A_0=\sqrt{\frac{e_1}{E_0}},
\end{align}
with the constant $e_1=\frac{1}{2}(L_{-}S_{1,0},S_{1,0})>0$. Moreover, Let $P_0=P(u_0)$ be the linear momentum and define the vector
\begin{align}\label{back-define-2}
B_0=\frac{P_0}{p_1},
\end{align}
where $p_1=2\int_{\mathbb{R}^2}L_{-}S_{0,1}\cdot S_{0,1}>0$ is a constant.

Now we claim that the following backwards propagation estimate holds.
\begin{lemma}(Backwards propagation of smallness)\label{lemma-back}
Assume that, for some $t_1<0$ sufficiently close to $0$, we have the bounds
\begin{align*}
&\left|\|u\|_2^2-\|Q\|_2^2\right|\lesssim\lambda^2(t_1),\\
&\|D^{\frac{1}{2}}\tilde{\epsilon}(t_1)\|_2^2+\frac{\|\tilde{\epsilon}\|_2^2}{\lambda(t_1)}
\lesssim\lambda^2(t_1),\\
&\left|\lambda(t_1)-\frac{t_1^2}{4A_0^2}\right|\lesssim\lambda^{3}(t_1),\
\left|\frac{a(t_1)}{\lambda^{\frac{1}{2}}(t_1)}\right|\lesssim\lambda(t_1),\
\left|\frac{b(t_1)}{\lambda(t_1)}-B_0\right|\lesssim\lambda(t_1),
\end{align*}
where $A_0$ and $B_0$ are defined in \eqref{back-define-1} and \eqref{back-define-2}, respectively. Then there exists a time $t_0<t_1$ depending on $A_0$ and $B_0$ such that $\forall t\in[t_0,t_1]$, it holds that
\begin{align*}
&\|D^{\frac{1}{2}}\tilde{\epsilon}(t)\|_2^2+\frac{\|\tilde{\epsilon}\|_2^2}{\lambda(t)}\lesssim
\|D^{\frac{1}{2}}\tilde{\epsilon}(t_1)\|_2^2+\frac{\|\tilde{\epsilon}\|_2^2}{\lambda(t_1)}\lesssim\lambda^2(t),\\
&\left|\lambda(t)-\frac{t^2}{4A_0^2}\right|\lesssim\lambda^{3}(t),\
\left|\frac{a(t)}{\lambda^{\frac{1}{2}}(t)}-\frac{1}{A_0}\right|\lesssim\lambda(t),\
\left|\frac{b(t)}{\lambda(t)}-B_0\right|\lesssim\lambda(t).
\end{align*}
\end{lemma}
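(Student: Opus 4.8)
The plan is to run a standard bootstrap/continuity argument on the interval $[t_0,t_1]$, using the localized energy functional $J_A$ from Lemma~\ref{lemma-energy-estimate} as the main Lyapunov quantity together with the modulation estimates of Lemmas~\ref{lemma-mod-1} and~\ref{lemma-mod-2}. Concretely, I would fix $A=A_0$ and let $[t_0,t_1]$ be the maximal subinterval on which the bootstrap assumptions
\[
\|D^{1/2}\tilde\epsilon(t)\|_2^2+\frac{\|\tilde\epsilon(t)\|_2^2}{\lambda(t)}\le K\lambda^2(t),\quad
\left|\lambda(t)-\frac{t^2}{4A_0^2}\right|\le K\lambda^3(t),\quad
\left|\frac{a(t)}{\lambda^{1/2}(t)}-\frac1{A_0}\right|\le K\lambda(t),\quad
\left|\frac{b(t)}{\lambda(t)}-B_0\right|\le K\lambda(t)
\]
hold for a suitably large constant $K$; the goal is to show that on this interval each inequality in fact holds with $K$ replaced by $K/2$ (or with an absolute constant in place of $K$), which by continuity forces $t_0$ to be an absolute distance from $0$, independent of $E_0,P_0$ beyond the scaling already built into $A_0,B_0$.

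The core step is controlling the energy piece. I would first show, via the coercivity of $L$ on the orthogonal complement of its kernel (Remark~\ref{remark:1}, plus a coercivity lemma of the type referenced as Lemma~\ref{lemma-app-coercivity-estimate}), that $J_A(u)$ is comparable to $\|D^{1/2}\tilde\epsilon\|_2^2+\lambda^{-1}\|\tilde\epsilon\|_2^2$ up to lower-order errors. Then I would integrate the identity for $dJ_A/dt$ from Lemma~\ref{lemma-energy-estimate} backward from $t_1$ to $t$: each term on the right is estimated using $\|\psi\|_2\lesssim\lambda^{-1/2}\cdot\mathcal{O}(a^5+|b|^2|\mathcal P|)$ from Lemma~\ref{lemma-3app} (after undoing the renormalization), the modulation bounds from Lemma~\ref{lemma-mod-2} for $a_t$, $\lambda_t+a\lambda$, $\alpha_t$, and the bootstrap hypotheses; the damping term $-\frac1\lambda(\tilde\epsilon,f'(\tilde Q)\tilde\epsilon)$ and the convexity of $\phi$ (so that $\Delta\phi\ge 0$, $\Delta^2\phi\le 0$ in the relevant regions) give the favorable sign needed to absorb borderline contributions. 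Using $\lambda\sim t^2$, $ds/dt=\lambda^{-1}\sim t^{-2}$, one checks that $\int_t^{t_1}|dJ_A/dt'|\,dt'$ is controlled by $\lambda^2(t_1)$ plus a small multiple of $\sup_{[t,t_1]}\lambda^{-2}(\|D^{1/2}\tilde\epsilon\|_2^2+\lambda^{-1}\|\tilde\epsilon\|_2^2)$, which closes the first bootstrap estimate. The almost-critical-mass bound $|\,\|u\|_2^2-\|Q\|_2^2\,|\lesssim\lambda^2$ is propagated trivially since mass is conserved and $\lambda$ is (essentially) monotone.

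For the parameter estimates, I would use Lemma~\ref{lemma-mod-1} — which gives $a^2+|b|+\|\epsilon\|_{H^{1/2}}^2\lesssim\lambda(|E_0|+|P_0|)+\text{l.o.t.}$ — together with the expansions $E(\mathbf Q_{\mathcal P})=e_1a^2+\cdots$ and $P(\mathbf Q_{\mathcal P})=p_1 b+\cdots$ from Lemma~\ref{lemma-3app-2} and conservation of $E,P$ (scaled: $E(v)=\lambda^2 E_0$ up to the $\epsilon$-cross terms, $P(v)=\lambda P_0$) to deduce $a^2=E_0^{-1}e_1^{-1}\cdot(\lambda^2 E_0)(1+o(1))=\lambda^2/A_0^2\,(1+o(1))$ and $b=\lambda B_0(1+o(1))$, i.e. the refined bounds on $a/\lambda^{1/2}$ and $b/\lambda$. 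Feeding these and the improved modulation bound $|\lambda_s/\lambda+a|\lesssim a^5+\cdots$ (equivalently $|\lambda_t+a|\lesssim\lambda^2$) into the ODE $\lambda_t=-a+\mathcal O(\lambda^2)=-\lambda^{1/2}/A_0+\mathcal O(\lambda)$ and integrating from $t_1$ gives $\sqrt{\lambda(t)}=-t/(2A_0)+\mathcal O(t^2)$, hence $\lambda(t)=t^2/(4A_0^2)+\mathcal O(t^3)$, closing the remaining bootstrap inequalities with a constant better than $K$.

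The main obstacle I expect is the energy-estimate step in the non-radial setting: the term
\[
a\,\Re\!\left(\int A\nabla\phi\!\left(\tfrac{x-\alpha}{A\lambda}\right)\!\left(\tfrac34|\tilde Q|^{-1}|\tilde\epsilon|^2 w\tilde Q+\tfrac14|\tilde Q|^{-1}\tilde\epsilon^2\bar{\tilde Q}\right)\!\cdot\overline{\nabla\tilde Q}\right)
\]
and the momentum/boost cross-terms coming from the translation $\alpha$ do not vanish by symmetry as they would in the radial case, so one must estimate them directly. These are cubic in $\tilde\epsilon$, so Strichartz estimates (Lemma~\ref{lemma:stri}) in the $L^q_tL^\infty_x$ and $L^\infty_tH^s_x$ norms — controlling $\|\tilde\epsilon\|_{L^q_tL^\infty_x}$ in terms of the $H^s$ data and the source, exactly as flagged in the introduction — are what make the cubic terms summable over the time interval; getting the right power of $\lambda$ out of them (so they are genuinely lower order than $\lambda^2$) is the delicate book-keeping. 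The convexity of $\phi$ and the positivity $L_->0$ on $Q^\perp$ are the structural inputs that keep the whole scheme from losing a power.
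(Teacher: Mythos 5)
Your bootstrap strategy (use the maximal bootstrap interval, compare $J_A$ to the $H^{1/2}$ energy by coercivity, integrate Lemma~\ref{lemma-energy-estimate} backward, and close the parameter estimates via the scaled conservation of $E$ and $P$ together with the modulation bounds from Lemma~\ref{lemma-mod-2}) is exactly the route the paper takes, and in particular the $b$-estimate via $P(v)=\lambda P_0$ and $p_1 b=\lambda P_0+\mathcal O(\lambda^2)$ is the one new ingredient the paper highlights over the radial case. One scaling slip to correct: in the mass-critical setting $E$ scales linearly in $\lambda$, so $E(v)=\lambda E_0$ and not $\lambda^2 E_0$, giving $e_1a^2\approx\lambda E_0$, i.e.\ $a/\lambda^{1/2}\approx 1/A_0$; your displayed line $a^2=E_0^{-1}e_1^{-1}(\lambda^2E_0)=\lambda^2/A_0^2$ is inconsistent (it would give $a\sim\lambda$, contradicting both the Lemma's conclusion and the $a\sim\lambda^{1/2}$ you use later in the ODE for $\lambda$), though the rest of your argument implicitly uses the correct power.
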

\begin{proof}
By using the Lemma \ref{lemma-mod-2} and \ref{lemma-energy-estimate} and the similar argument as \cite{GL2022CPDE}, we can obtain the estimates about $\tilde{\epsilon}$ in $H^\frac{1}{2}$, $\lambda(t)$ and $a(t)$. Here we only give the estimate for $b$. In fact, by following the calculations in the proof of Lemma \ref{lemma-mod-1} for the linear momentum $P(u_0)$ and recalling that $a^2+|b|\sim\lambda$, we deduce that
\begin{align}\notag
bp_1=\lambda P_0+\mathcal{O}(\lambda^2),
\end{align}
where $p_1=2\int_{\mathbb{R}^2}L_{-}S_{0,1}\cdot S_{0,1}$ is a positive constant. Here we also used the fact that $\|\epsilon\|_{H^{\frac{1}{2}}}^2\lesssim\lambda^2$. Recalling the definition of $B_0=\frac{P_0}{p_1}$ see \eqref{back-define-2}, we thus obtain
\begin{align*}
\left|\frac{b(t)}{\lambda(t)}-B_0\right|\lesssim\lambda(t).
\end{align*}
This complete the proof of Lemma \ref{lemma-back}.
\end{proof}

\section{Existence of ground state mass blowup solutions}
In this section, we prove the following result.
\begin{theorem}
Let $\gamma_0\in\mathbb{R}$, $P_0\in\mathbb{R}^2$, $x_0\in\mathbb{R}^2$ and $E_0>0$ be given. Then there exist a time $t_0<0$ and a solution $u\in C([t_0,0); H^{s}(\mathbb{R}^2))$, $\delta\in\left(\frac{3}{4},1\right)$ of \eqref{equ-1-hf-2} such that $u$ blowup  at time $T=0$ with
\begin{align}\notag
E(u)=E_0,\ P(u)=P_0,\ \text{and}\ \|u\|_2^2=\|Q\|_2^2.
\end{align}
Furthermore, we have $\|D^{\frac{1}{2}}u\|_2\sim t^{-1}$ as $t\rightarrow0^{-}$, and $u$ is of the form
\begin{align}\notag
u(t,x)=\frac{1}{\lambda(t)}[Q_{\mathcal{P}(t)}+\epsilon]
\left(t,\frac{x-\alpha}{\lambda}\right)e^{i\gamma(t)}=\tilde{Q}+\tilde{\epsilon},
\end{align}
where $\mathcal{P}(t)=(a(t),b(t))$, and $\epsilon$ satisfies the orthogonality condition \eqref{mod-orthogonality-condition}. Finally, the following estimate hold:
\begin{align*}
&\|\tilde{\epsilon}\|_2\lesssim\lambda,\ \|\tilde{\epsilon}\|_{H^{1/2}}\lesssim\lambda^{\frac{1}{2}},\\
&\lambda(t)-\frac{t^2}{4A_0^2}=\mathcal{O}(\lambda^2),\ \frac{a}{\lambda^{\frac{1}{2}}}(t)-\frac{1}{A_0}=\mathcal{O}(\lambda),\
\frac{b}{\lambda}(t)-B_0=\mathcal{O}(\lambda),\\
&\gamma(t)=-\frac{4A_0^2}{t}+\gamma_0+\mathcal{O}(\lambda^{\frac{1}{2}}),\
\alpha(t)=x_0+\mathcal{O}(\lambda^{\frac{3}{2}}),
\end{align*}
 for $t\in[t_0,0)$ and $t$ sufficiently close to $0$.
Here $A_0>0$ and $B_0\in\mathbb{R}^2$  is a  constant and the vector defined in \eqref{back-define-1} and \eqref{back-define-2}, respectively.
\end{theorem}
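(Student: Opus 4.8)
The plan is to construct the blow-up solution by the standard compactness/limiting procedure: fix a sequence of times $t_n \to 0^-$, solve the equation \eqref{equ-1-hf-2} \emph{backwards} from $t_n$ with carefully chosen Cauchy data $u_n(t_n)$ that already satisfies the conclusions of Lemma \ref{lemma-back} at $t=t_n$ (exact mass $\|Q\|_2$, exact energy $E_0$, exact momentum $P_0$, with $\tilde{\epsilon}(t_n)=0$ and the modulation parameters $\lambda(t_n), a(t_n), b(t_n), \gamma(t_n), \alpha(t_n)$ chosen to match the leading-order profiles $\lambda \sim t^2/4A_0^2$, $a \sim \lambda^{1/2}/A_0$, $b/\lambda \sim B_0$, $\gamma \sim -4A_0^2/t + \gamma_0$, $\alpha \sim x_0$). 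First I would record that such data exist: one takes $u_n(t_n,x) = \lambda_n^{-1} Q_{\mathcal{P}_n}\big((x-\alpha_n)/\lambda_n\big)e^{i\gamma_n}$ and then perturbs it slightly (adding a small multiple of, say, $iQ$-type directions and adjusting the $L^2$ norm) so that $E_0$, $P_0$ and $\|Q\|_2^2$ hold exactly, using Lemma \ref{lemma-3app-2} to see that the map from parameters to $(M,E,P)$ is a local diffeomorphism near the profile.

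Next I would invoke the local well-posedness theory (the Strichartz estimates of Lemma \ref{lemma:stri} together with Lemmas \ref{lemma:flr} and \ref{lemma:fcr}) to produce, for each $n$, a solution $u_n \in C([t_0^{(n)},t_n]; H^s)$ on a maximal backward interval, and then run the bootstrap of Lemma \ref{lemma-back}: as long as the geometrical decomposition \eqref{mod-decomposition} is valid with $a^2+|b|+\|\epsilon\|_{H^{1/2}}^2 \ll 1$, the modulation estimates (Lemma \ref{lemma-mod-2}) and the refined energy estimate (Lemma \ref{lemma-energy-estimate}, whose output drives the Gronwall argument for $J_A$) give the improved bounds $\|D^{1/2}\tilde{\epsilon}\|_2^2 + \|\tilde{\epsilon}\|_2^2/\lambda \lesssim \lambda^2$ together with the asymptotics for $\lambda, a, b$ on all of $[t_0,t_n]$, where $t_0<0$ is a \emph{uniform} time independent of $n$. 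A continuity/open-closed argument then shows the decomposition persists on the whole of $[t_0,t_n]$. The key point is that the constant $t_0$ and all implied constants in Lemma \ref{lemma-back} do not depend on $n$, so the family $\{u_n\}$ is uniformly bounded in $C([t_0,0);H^{1/2})$ and, via the Strichartz control of $L^q_tL^\infty_x$ and $L^\infty_t H^s_x$ norms, uniformly bounded in $H^s$ on compact subintervals.

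Then I would extract the blow-up solution by a diagonal/compactness argument: along a subsequence, $u_n(t_0) \rightharpoonup u_\infty(t_0)$ weakly in $H^s$, and by the continuity of the solution map (again Strichartz) the solutions $u_n$ converge, locally uniformly on $[t_0,0)$, to a solution $u=u_\infty$ of \eqref{equ-1-hf-2}. The uniform estimates pass to the limit, giving exactly the stated bounds for $\tilde{\epsilon}$, $\lambda$, $a$, $b$, $\gamma$, $\alpha$ on $[t_0,0)$; in particular $\lambda(t) \sim t^2/(4A_0^2) \to 0$ forces $\|D^{1/2}u\|_2 \sim \lambda^{-1/2} \sim |t|^{-1}$, so $u$ blows up at $T=0$. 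The conservation laws give $E(u)=E_0$, $P(u)=P_0$; for the exact mass one argues that $\|u\|_2^2 - \|Q\|_2^2$ is conserved, equals $\mathcal{O}(\lambda^2(t)) \to 0$ as $t \to 0^-$, hence is identically $0$. Finally I would integrate the modulation laws $-\lambda_s/\lambda = a + \mathcal{O}(\ldots)$, $\gamma_s = 1 + \mathcal{O}(\ldots)$, $\alpha_s/\lambda = b + \mathcal{O}(\ldots)$ against $ds/dt = 1/\lambda$ to convert the $s$-asymptotics into the claimed $t$-asymptotics $\lambda \sim t^2/4A_0^2$, $\gamma \sim -4A_0^2/t + \gamma_0$, $\alpha \to x_0$.

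I expect the main obstacle to be obtaining the $n$-uniform $H^s$ (with $s>3/4$, above the $H^{1/2}$ energy level) control of $\tilde{\epsilon}$, since, as the authors stress in comments 2 and 3 of the introduction, the radial reduction that allowed controlling $\|\cdot\|_{H^s}$ by a weighted $L^q_t H^1_x$ norm is unavailable here; the replacement is the $L^q_tL^\infty_x$/$L^\infty_t H^s_x$ Strichartz machinery of Lemma \ref{lemma:stri} applied to the $\tilde{\epsilon}$ equation, and closing that estimate — feeding the $H^{1/2}$ energy bound and the modulation bounds into the fractional Leibniz and chain rules to bound the nonlinear source terms and the $\psi$-error on a time interval uniform in $n$ — is the delicate part. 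A secondary subtlety is making the perturbation of the Cauchy data at $t_n$ consistent with all three conservation constraints \emph{and} the orthogonality conditions \eqref{mod-orthogonality-condition} simultaneously, which is handled by an implicit function theorem argument using the nondegeneracy built into Lemma \ref{lemma-3app-2}.
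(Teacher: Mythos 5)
Your high--level roadmap coincides with the paper's: integrate backwards from a sequence of times $t_n\to 0^-$ starting from data built out of the approximate profile, use Lemma \ref{lemma-back} to get $n$--uniform bounds on a time interval $[t_0,t_n]$ with $t_0$ independent of $n$, then pass to a weak limit and read off the blow-up asymptotics from the uniform bounds and from integrating the modulation ODEs. One small divergence from the paper is worth noting: the paper takes $u_n(t_n)$ to be \emph{exactly} the renormalized profile $\lambda_n^{-1}Q_{\mathcal{P}_n(t_n)}\big((\cdot-\alpha_n(t_n))/\lambda_n(t_n)\big)e^{i\gamma_n(t_n)}$ with the parameters \eqref{exist:1}, so that $\tilde\epsilon(t_n)\equiv 0$ and the constraints $M=\|Q\|_2^2$, $E=E_0$, $P=P_0$ hold only up to $\mathcal{O}(t_n^4)$-errors; the exact values emerge in the limit $n\to\infty$ by conservation along each trajectory and the almost-critical mass bound. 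Your extra step of perturbing the data at $t_n$ so that the three constraints hold \emph{exactly} for every $n$ is a legitimate alternative (and makes the "conserved quantity equals $\mathcal{O}(\lambda^2(t))\to 0$" argument cleaner), but it is not what the paper does, and you would have to re-check that the perturbation keeps $\tilde\epsilon(t_n)$ small enough for the hypotheses of Lemma \ref{lemma-back}; as it stands this is an unnecessary complication rather than an error.

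The genuine gap is in what you call the "delicate part," the $n$--uniform $H^s$ ($s\in(\tfrac34,1)$) control of $\tilde\epsilon_n$. You say the Strichartz/fractional Leibniz machinery closes this "on a time interval uniform in $n$," but a single Duhamel/Strichartz iteration over the full interval $[t_0,t_n]$ cannot close: the dominant source term involves a coefficient $\|\tilde Q_n\|_{L^\infty_x}\sim\lambda_n^{-1}$, and since $|t_0|$ is a fixed constant, the contribution $|[t_0,t_n]|\cdot\|\tilde Q_n\|_{L^\infty_x}\cdot\|\tilde\epsilon_n\|_{L^\infty_tH^s}$ is $\mathcal{O}(\lambda_n^{-1})$-large and cannot be absorbed. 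The paper's resolution is the partition of $[t_0,t_n]$ into $N$ subintervals $\Delta_j$ of length $h\sim\lambda_n^2$, so that on each $\Delta_j$ one has $h\|\tilde Q_n\|_{L^\infty_x}\sim\lambda_n\ll1$ and the estimate in the mixed norm $\|\cdot\|_{X_{\Delta_j}}=\|\cdot\|_{L^\infty_{\Delta_j}H^s}+\|\cdot\|_{L^q_{\Delta_j}L^\infty_x}$ closes, producing a per-slab gain $\lambda_n^{3-s}$; the bound is then propagated inductively and summed over $N\sim\lambda_n^{-3/2}$ slabs to give $\|\tilde\epsilon_n\|_{X_{[t_0,t_n]}}\lesssim\lambda_n^{\tfrac32(1-\tfrac1q)-s}$, and the condition $s<\tfrac98$ (in particular $s<1$) is exactly what makes this exponent positive. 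Two further ingredients of the paper's Step 2 are also absent from your sketch: the technical Lemma \ref{lemma:tec} bounding $\||Q_n+u|-|Q_n|\|_{H^s}\lesssim\|u\|_{H^s}$, which is needed because the fractional chain rule does not directly apply to the non-smooth nonlinearity $|\tilde Q_n+\tilde\epsilon_n|(\tilde Q_n+\tilde\epsilon_n)-|\tilde Q_n|\tilde Q_n$, and the quantitative input $\|\nabla^k\psi_n\|_{L^2}\lesssim\lambda_n^{1-k}$, $k=0,1$, interpolated to control $\|\psi_n\|_{H^s}$ on each slab. Without the partition argument and these two inputs, the proof of \eqref{Hs:estimate} does not go through.
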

\begin{proof}\textbf{Step 1.} Backwards uniform bounds.

Let $t_n\rightarrow0^{-}$ be a sequence of negative times and let $u_n$ be the solution to \eqref{equ-1-hf-2} with initial data at $t=t_n$ given by
\begin{align*}
u_n(t_n,x)=\frac{1}{\lambda_n(t_n)}Q_{\mathcal{P}_n(t_n)}
\left(\frac{x-\alpha_n(t_n)}{\lambda_n(t_n)}\right)e^{i\gamma_n(t_n)},
\end{align*}
where the sequence $\mathcal{P}_n(t_n)=(a_n(t_n),b_n(t_n))$ and $\{\lambda_n(t_n),\alpha_n(t_n)\}$ are given by
\begin{align}\notag
a_n(t_n)&=-\frac{t_n}{2A_0},\ \lambda_n(t_n)=\frac{t_n^2}{4A_0^2},\
\gamma_n(t_n)=\gamma_0-\frac{4A_0^2}{t_n},\\\label{exist:1}
b_n(t_n)&=\frac{B_0t_n^2}{2A_0},\ \alpha_n(t_n)=x_0.
\end{align}
By Lemma \ref{lemma-3app-2}, we have
\begin{align*}
\int|u_n(t_n)|^2=\int|Q|^2+\mathcal{O}(t_n^4),
\end{align*}
and $\tilde{\epsilon}(t_n)=0$ by construction. Thus $u_n$ satisfies the assumptions of Lemma \ref{lemma-back}. Hence we can find a backwards time $t_0$ independent of $n$ such that for al $t\in[t_0,t_n)$ we have the geometric decomposition
\begin{align*}
u_n(t,x)=\frac{1}{\lambda_n(t)}Q_{\mathcal{P}_n(t)}
\left(\frac{x-\alpha_n(t)}{\lambda_n(t)}\right)+\tilde{\epsilon}_n(t,x),
\end{align*}
with the uniform bounds given by
\begin{align*}
&\|D^{\frac{1}{2}}\tilde{\epsilon}_n(t)\|_2^2
+\frac{\|\tilde{\epsilon}_n(t)\|_2^2}{\lambda_n(t)}\lesssim\lambda_n(t),\\
&\left|\lambda_n(t)-\frac{t^2}{4A_0^2}\right|\lesssim K\lambda_n^{2}(t),\
\left|\frac{a_n(t)}{\lambda_n^{\frac{1}{2}}(t)}-\frac{1}{A_0}\right|\lesssim K\lambda_n(t),\
\left|\frac{b_n(t)}{\lambda_n(t)}-B_0\right|\lesssim K\lambda_n(t),
\end{align*}
and
\begin{align}\label{Hs:estimate}
\|\tilde{\epsilon}_n(t)\|_{\dot{H}^{\frac{1}{2}+\theta}}\lesssim\lambda_n^{\frac{1}{2}-\theta},~~\text{where}~~~\theta\in\left(\frac{1}{4},\frac{1}{2}\right),
\end{align}
which we prove in step 2.

The proof of similar to the proof in \cite[Theorem 7.1]{GL2022CPDE}. A small modification due to the presence of the translation parameter $\alpha$ is needed.
For this, we notice that 
\[P(u(t))=\frac{b}{\lambda}p_1+o(1)\to P_0~~\text{as}~~t\to 0^-,\]
by the choice of $B_0$ and $\lambda_n(t_n)$. By linear momentum conservation, this shows that 
\[P(u(t))=P_0.\]
Finally, we recall the rough bound $\left|\frac{(\alpha_n)_s}{\lambda_n}+b_n\right|\lesssim\lambda_n$. Integrating this and using the bounds for $b_n$ and $\lambda_n$, we deduce that 
\[\left|\frac{d}{dt}(\alpha_n-x_0)\right|=\left|\frac{(\alpha_n)_s}{\lambda_n}\right|\lesssim\lambda_n+|b_n|\lesssim\lambda_n.\]
Integrating this bound and using \eqref{exist:1}, we find that 
\[\alpha_n(t)=x_0+\mathcal{O}\left(\lambda^{\frac{3}{2}}\right),\]
which shows that the claim for $\alpha(t)$ holds.

\textbf{Step 2.} $H^{\frac{1}{2}+\theta}$ bound.


It remains to prove the $H^{1/2+\theta}$ bound \eqref{Hs:estimate}. Our point of departure is again the identity
\begin{align*}
    i\partial_t\tilde{\epsilon}_n=D\tilde{\epsilon}_n-\psi_n-F_n,
\end{align*}
where
\begin{align*}
    F_n=|\tilde{Q}_n+\tilde{\epsilon}_n|(\tilde{Q}_n+\tilde{\epsilon}_n)-|\tilde{Q}_n|\tilde{Q}_n
\end{align*}
We plan to obtain a $H^{1/2+\theta}$-bound on $\tilde{\epsilon}_n$, taking advantage of the a priori bounds at time $t_n \sim \lambda_n^{1/2}$ and those assumed for $t\in[t_0,t_n]$.
We make  partition of the interval $[t_0,t_n]$ into
$$ t_0 = s_0 < s_1 < \cdots < s_N=t_n,  \ s_j-s_{j-1}= h, \ j = 1, \cdots , N, $$
where
$$  h \sim \lambda_n^2,~ N \sim (t_n-t_0)/h \sim (\lambda_n)^{-\frac{3}{2}} .$$
We can obtain estimates in each interval $\Delta_j=[s_{j-1},s_j]$.  To obtain the desired result, for any time interval $I$ we use the space $X_{I}$ with norm
\begin{align*}
    \|u\|_{X_{I}}=\|u(t,x)\|_{L^{\infty}_{I} H^s(\mathbb{R}^2)}+\|u(t,x)\|_{L^q_IL^\infty(\mathbb{R}^2)},
\end{align*}
where $\frac{1}{q}\geq 1-s$, $q>4$ and $s<1$.

Now we estimate the nonlinear term
\begin{align*}
   F_n=&|\tilde{Q}_n+\tilde{\epsilon}_n|(\tilde{Q}_n+\tilde{\epsilon}_n)-|\tilde{Q}_n|\tilde{Q}_n\\
   =&|\tilde{Q}_n+\tilde{\epsilon}_n|\tilde{\epsilon}_n+(|\tilde{Q}_n+\tilde{\epsilon}_n|-|\tilde{Q}_n|)\tilde{Q}_n \\
   \lesssim&|\tilde{Q}_n|\tilde{\epsilon}_n+|\tilde{\epsilon}_n|\tilde{\epsilon}_n+(|\tilde{Q}_n+\tilde{\epsilon}_n|-|\tilde{Q}_n|)\tilde{Q}_n.
\end{align*}
To this end we can use the following. 
\begin{lemma}\label{lemma:tec}
    There exists $\delta>0$ so that for any $s \in  [0,1]$ there exists a constant $C=C(\delta,s)>0$ so that for any $u \in H^s \cap L^\infty$ such that
    $$ \left\|\frac{u}{Q_n} \right\|_{L^\infty} \leq \delta,$$
        we have 
        $$ \| |Q_n+u|-|Q_n| \|_{H^s} \leq C \|u\|_{H^s}.$$
\end{lemma}
\begin{proof}
    It is sufficient to make the series expansion
    $$|Q_n+u|-|Q_n| = |Q_n| \left( \sum_{k+\ell \geq 1} c_{k,\ell} \left(\frac{u}{Q_n}\right)^k \left(\frac{\overline{u}}{\overline{Q_n}}\right)^\ell\right)$$
    combined with the fractional Leibniz rule of Lemma \ref{lemma:flr} and
    \begin{equation}\label{eq.mhs1}
        \left\||Q_n| \frac{u}{Q_n} \right\|_{H^s} \lesssim \|u\|_{H^s}.
    \end{equation}
        The last estimate can be verified for $s=1$ and then by complex interpolation we have \eqref{eq.mhs1}.
\end{proof}
Hence, by Lemma \ref{lemma:flr} and \ref{lemma:fcr}, we get 
\begin{align*}
  \|F_n\|_{H^s}=&\||\tilde{Q}_n+\tilde{\epsilon}_n|\tilde{\epsilon}_n+(|\tilde{Q}_n+\tilde{\epsilon}_n|-|\tilde{Q}_n|)\tilde{Q}_n\|_{H^s}\\
  \lesssim&\|\tilde{Q}_n\|_{H^s}\|\tilde{\epsilon}_n\|_{L^\infty}+\|\tilde{Q}_n\|_{L^\infty}\|\tilde{\epsilon}_n\|_{H^s}+\|(|\tilde{Q}_n+\tilde{\epsilon}_n|-|\tilde{Q}_n|)\|_{H^s} \|\tilde{Q}_n\|_{L^\infty}\\
  &+\|(|\tilde{Q}_n+\tilde{\epsilon}_n|-|\tilde{Q}_n|)\|_{L^\infty} \|\tilde{Q}_n\|_{H^s} \\
\lesssim&\|\tilde{Q}_n\|_{H^s}\|\tilde{\epsilon}_n\|_{L^\infty}+\|\tilde{Q}_n\|_{L^\infty}\|\tilde{\epsilon}_n\|_{H^s}+\|\tilde{\epsilon}_n\|_{L^\infty}\|\tilde{\epsilon}_n\|_{H^s},
\end{align*}
where in the last step we used the Lemma \ref{lemma:tec}.

Now defining the nonlinear mapping
\begin{align*}
    \Psi(u)(t):=U(t_0)u(t_0,x)-i\int_{t_0}^t U(t-s)[F_n(s)+\psi_n(s)]ds.
\end{align*}
Then 
\begin{align*}
    \|\Psi(u)(t)\|_{H^s}\lesssim&\|u(t_0)\|_{H^s}+\int_I\left(\|F_n\|_{H^s}+\|\psi_n\|_{H^s}\right)dt\\
    \lesssim&\|u(t_0)\|_{H^s}+\int_I[\|\tilde{Q}_n\|_{L^\infty}\|\tilde{\epsilon}_n\|_{H^s}+\|\tilde{Q}_n\|_{H^s}\|\tilde{\epsilon}\|_{L^\infty}+\|\tilde{\epsilon}_n\|_{L^\infty}\|\tilde{\epsilon}_n\|_{H^s}+\|\psi_n\|_{H^s}]dt\\
    \lesssim&\|u(t_0)\|_{H^s}+{|I|}^{1-\frac{1}{q}}\Big(\|\tilde{Q}_n\|_{L^qL^\infty}\|\tilde{\epsilon}_n\|_{L^\infty H^s}+\|\tilde{Q}_n\|_{L^\infty H^s}\|\tilde{\epsilon}\|_{L^qL^\infty}\\
    &+\|\tilde{\epsilon}_n\|_{L^qL^\infty}\|\tilde{\epsilon}_n\|_{L^\infty H^s}\Big)+\int_I\|\psi_n\|_{H^s}dt\\
    \lesssim&\|u(t_0)\|_{H^s}+{|I|}^{1-\frac{1}{q}}\Big(|I|^{\frac{1}{q}}\lambda_n^{-1}\|\tilde{\epsilon}_n\|_{L^\infty H^s}+\lambda^{-s}\|\tilde{\epsilon}\|_{L^qL^\infty}\\
    &+\|\tilde{\epsilon}_n\|_{L^qL^\infty}\|\tilde{\epsilon}_n\|_{L^\infty H^s}\Big)+\int_I\|\psi_n\|_{H^s}dt,
\end{align*}
where $I=[t_0,t]$ is the time interval and the $L^q_IL^\infty$-norm 
\begin{align*}
    &\|\Psi(u)\|_{L^q_IL^\infty}\\
    \lesssim&\|U(t)u(t_0)\|_{L^q_IL^\infty}+\|F_n+\psi_n\|_{L^1_I\dot{H}^{1-\frac{1}{q}}}\\
    \lesssim&\|u(t_0)\|_{\dot{H}^{1-\frac{1}{q}}}+\int_I\|F_n+\psi_n\|_{H^s}\\
    \lesssim&\|u(t_0)\|_{\dot{H}^s}+\int_I[\|\tilde{Q}_n\|_{L^\infty}\|\tilde{\epsilon}_n\|_{H^s}+\|\tilde{Q}_n\|_{H^s}\|\tilde{\epsilon}\|_{L^\infty}+\|\tilde{\epsilon}_n\|_{L^\infty}\|\tilde{\epsilon}_n\|_{H^s}+\|\psi_n\|_{H^s}]\\
    \lesssim&\|u(t_0)\|_{\dot{H}^s}+\|\tilde{\epsilon}_n\|_{L^\infty H^s}\int_I\|\tilde{Q}_n\|_{L^\infty}+\|\tilde{Q}_n\|_{L^\infty H^s}\int_I\|\tilde{\epsilon}_n\|_{L^\infty}\\
    &+\|\tilde{\epsilon}_n\|_{L^\infty H^s}\int_I\|\tilde{\epsilon}_n\|_{L^\infty}+\|\psi_n\|_{L^1\dot{H}^s}\\
    \lesssim&\|u(t_0)\|_{\dot{H}^s}+|I|\lambda_n^{-1}\|\tilde{\epsilon}_n\|_{L^\infty H^s}+|I|^{1-\frac{1}{q}}\lambda_n^{-s}\|\tilde{\epsilon}_n\|_{L^qL^\infty}\\
    &+|I|^{1-\frac{1}{q}}\|\tilde{\epsilon}_n\|_{L^qL^\infty}\|\tilde{\epsilon}_n\|_{L^\infty H^s}+|I|\|\psi_n\|_{L^\infty\dot{H}^s}.
\end{align*}
Here $s\geq1-\frac{1}{q}$, $H^s\subset \dot{H}^{1-\frac{1}{q}}$ and we used Strichartz estimates (see Lemma \ref{lemma:stri}) and the estimates $\|\tilde{Q}_n\|_{L^\infty}\lesssim\lambda_n^{-1}$ and $\|\tilde{Q}_n\|_{H^s}\lesssim\lambda_n^{-s}$ again.


Now we consider the interval $\Delta_N=[s_{N-1},s_N]$,
\begin{align*}
    \|\tilde{\epsilon}_n\|_{X_{\Delta_N}}=&\|\tilde{\epsilon}_n\|_{L^{\infty}_{X_{\Delta_N}} H^s(\mathbb{R}^2)}+\|\tilde{\epsilon}_n\|_{L^q_{X_{\Delta_N}}L^\infty(\mathbb{R}^2)}\\
    \lesssim&|h|^{1-\frac{1}{q}}\Big(|h|^{\frac{1}{q}}\lambda_n^{-1}\|\tilde{\epsilon}_n\|_{L^\infty_{\Delta_N} H^s}+\lambda^{-s}\|\tilde{\epsilon}\|_{L^q_{\Delta_N}L^\infty}+\|\tilde{\epsilon}_n\|_{L^q_{\Delta_N}L^\infty}\|\tilde{\epsilon}_n\|_{L^\infty_{\Delta_N} H^s}\Big)\\
    &+|h|\|\psi_n\|_{L^\infty_{\Delta_N} H^s}+|h|\lambda_n^{-1}\|\tilde{\epsilon}_n\|_{L^\infty_{\Delta_N} H^s}+|h|^{1-\frac{1}{q}}\lambda_n^{-s}\|\tilde{\epsilon}_n\|_{L^q_{\Delta_N}L^\infty}\\
    &+|h|^{1-\frac{1}{q}}\|\tilde{\epsilon}_n\|_{L^q_{\Delta_N}L^\infty}\|\tilde{\epsilon}_n\|_{L^\infty_{\Delta_N} H^s}+|h|\|\psi_n\|_{L^\infty_{\Delta_N}\dot{H}^s}\\
    \lesssim&\lambda_n^{(1-\frac{1}{q})}\Big(\lambda_n^{\frac{1}{q}}\lambda_n^{-1}\|\tilde{\epsilon}_n\|_{L^\infty_{\Delta_N} H^s}+\lambda^{-s}\|\tilde{\epsilon}\|_{L^q_{\Delta_N}L^\infty}+\|\tilde{\epsilon}_n\|_{L^q_{\Delta_N}L^\infty}\|\tilde{\epsilon}_n\|_{L^\infty_{\Delta_N} H^s}\Big)\\
    &+\lambda_n^2\lambda_n^{-1}\|\tilde{\epsilon}_n\|_{L^\infty_{\Delta_N} H^s}+(\lambda_n^2)^{1-\frac{1}{q}}\lambda_n^{-s}\|\tilde{\epsilon}_n\|_{L^q_{\Delta_N}L^\infty}\\
    &+\lambda_n^{1-\frac{1}{q}}\|\tilde{\epsilon}_n\|_{L^q_{\Delta_N}L^\infty}\|\tilde{\epsilon}_n\|_{L^\infty_{\Delta_N} H^s}+\lambda_n^2\|\psi_n\|_{X_{\Delta_N}}\\
    \lesssim&\lambda_n\|\tilde{\epsilon}_n\|_{X_{\Delta_N}}+\lambda_n^{2-\frac{2}{q}-s}\|\tilde{\epsilon}_n\|_{X_{\Delta_N}}+\lambda_n^{2-\frac{2}{q}}\|\tilde{\epsilon}_n\|_{X_{\Delta_N}}^2+\lambda_n^2\|\psi_n\|_{X_{\Delta_N}}.
\end{align*}
Here we used $\tilde{\epsilon}(t_n)=0$ and $h\sim\lambda_n^2$. Since $q>4$, $1>s\geq1-\frac{1}{q}$  and $\|\tilde{\epsilon}_n\|_{X_{\Delta_N}}$ is small, the above estimate implies
\begin{align*}
     \|\tilde{\epsilon}_n\|_{X_{\Delta_N}}\lesssim \lambda_n^2\|\psi_n\|_{L^\infty_{\Delta_N}{H}^s}.
\end{align*}
The term $\|\psi_n\|_{L^\infty_{\Delta_N}{H}^s}.$ can be estimated by the following estimates 
\[\|\nabla^k\psi_n\|_{L^2} \lesssim \lambda_n^{1-k},~~k=0,1.
\]
This estimates can be obtained by \cite[Lemma 6.1. Step 4]{GL2022CPDE} and a small modification due to the presence of the translation parameter $\alpha$. 
So that
\begin{align*}
   \|\psi_n\|_{L^{\infty}_{I} H^s(\mathbb{R}^2)}
   \lesssim \|\psi_n\|_{L^2}^{1-s}\|\psi_n\|_{H^1}^s\lesssim\lambda_n^{1-s}.
\end{align*}
Hence, we deduce that 
\begin{align*}
     \|\tilde{\epsilon}_n\|_{X_{\Delta_N}}\lesssim \lambda_n^{3-s}.
\end{align*}
For the other intervals $\Delta_{j}$, $j=1,\cdots,N-1$,  we have
\begin{align*}
 \|\tilde{\epsilon}_n\|_{X_{\Delta_{j}}}\lesssim& \|\tilde{\epsilon}_n(s_{j})\|_{H^s}+\lambda_n^2\|\tilde{\epsilon}_n\|_{X_{\Delta_j}}+\lambda_n^{2(1-\frac{1}{q})-s}\|\tilde{\epsilon}_n\|_{X_{\Delta_j}}\\
 &+\lambda_n^{2(1-\frac{1}{q})}\|\tilde{\epsilon}_n\|_{X_{\Delta_j}}^2+\lambda_n^2\|\psi_n\|_{L^\infty_{\Delta_j}H^s}.
\end{align*}
By the similar argument as before, we deduce
\begin{align*}
\|\tilde{\epsilon}_n\|_{X_{\Delta_{j}}}\lesssim& \|\tilde{\epsilon}_n(s_{j})\|_{H^s}+\lambda_n^{3-s} \lesssim \|\tilde{\epsilon}_n\|_{X_{\Delta_{j+1}}}+\lambda_n^{3-s} .
\end{align*}
and inductively we find
\[\|\tilde{\epsilon}_n\|_{X_{\Delta_{j}}} \lesssim  (N-j)\|\tilde{\epsilon}_n\|_{X_{\Delta_{N}}}+\lambda_n^{3-s} \lesssim (N-j+1)\lambda_n^{3-s}  .\]
Therefore,  we have
\begin{align*}
\|\tilde{\epsilon}_n\|_{X_{[t_0,t_n]}}=&\|\tilde{\epsilon}_n\|_{L_{t_0,t_n]}^\infty H^s}+\|\tilde{\epsilon}_n\|_{L^q_{[t_0,t_n]}L^\infty}\\
=&\sup_{1\leq j\leq N}\|\tilde{\epsilon}_n\|_{L_{\Delta_{j}}^\infty H^s}+|N|^{\frac{1}{q}}\sup_{1\leq j\leq N}\|\tilde{\epsilon}_n\|_{L^q_{\Delta_{j}}L^\infty}\\
\lesssim&N\lambda_n^{3-s}+N^{\frac{1}{q}}N\lambda_n^{3-s}\lesssim\lambda_n^{\frac{3}{2}\left(1-\frac{1}{q}\right)-s}.
\end{align*}
Since $q>4$ and $1-\frac{1}{q}\leq s<1$, then
\[\frac{3}{2}\left(1-\frac{1}{q}\right)-s>\frac{3}{2}\left(1-\frac{1}{4}\right)-s=\frac{9}{8}-s>0.\]
This means
\begin{align*}
\|D^s \tilde{\epsilon}_n\|_{L^2}\lesssim \lambda_n^{9/8 - s} \lesssim  \lambda_n^{1 - s},~~\text{for}~~s\in\left(\frac{3}{4},1\right).
\end{align*}
With $s=1/2+\theta,$ then we get \eqref{Hs:estimate}.

Now the proof of this Theorem is  complete.
\end{proof}

\appendix

\section{Appendix A}
\subsection{Uniqueness of modulation parameters}\label{section-app-mod-1}
First, we show that the parameters $\{a,b,\lambda,\alpha,\gamma\}$ are uniquely determined if $\epsilon=\epsilon_1+i\epsilon_2\in H^{1/2}(\mathbb{R}^2)$ is sufficiently small and satisfies the orthogonality conditions \eqref{mod-orthogonality-condition}. Indeed, this follows from an implicit function argument, which we detail here.

For $\delta>0$, let $W_{\delta}=\{w\in H^{1/2}(\mathbb{R}^2):\|w-Q\|_{H^{1/2}}<\delta\}$. Consider approximate blowup profiles $Q_{\mathcal{P}}$ with $|\mathcal{P}|=|(a,b)|<\eta$, where $\eta>0$ is a small constant. For $w\in W_{\delta}$, $\lambda_1>0$, $y_1\in\mathbb{R}^2$, $\gamma\in\mathbb{R}$ and $|\mathcal{P}|<\eta$, we define
\begin{align}\notag
\epsilon_{\lambda_1,y_1,\gamma_1,a,b}(y)
=e^{i\gamma_1}\lambda_1w(\lambda_1y-y_0)-Q_{\mathcal{P}}.
\end{align}
Consider the map $\mathbf{\sigma}=(\sigma^1,\sigma^2,\sigma^3,\sigma^4,\sigma^5,\sigma^6,\sigma^7)$ define by
\begin{align*}
\sigma^1&=((\epsilon_{\lambda_1,y_0,\gamma_1,a,b})_2,\Lambda Q_{1\mathcal{P}})-
((\epsilon_{\lambda_1,y_1,\gamma_1,a,b})_1,\Lambda Q_{2\mathcal{P}}),\\
\sigma^2&=((\epsilon_{\lambda_1,y_1,\gamma_1,a,b})_2,\partial_aQ_{1\mathcal{P}})-
((\epsilon_{\lambda_1,y_1,\gamma_1,a,b})_1,\partial_aQ_{2\mathcal{P}}),\\
\sigma^3&=((\epsilon_{\lambda_1,y_1,\gamma_1,a,b})_1,\rho_2)-
((\epsilon_{\lambda_1,y_1,\gamma_1,a,b})_2,\rho_1),\\
\sigma^4&=((\epsilon_{\lambda_1,y_1,\gamma_1,a,b})_2,\partial_{1} Q_{1\mathcal{P}})-
((\epsilon_{\lambda_1,y_1,\gamma_1,a,b})_1,\partial_{1} Q_{2\mathcal{P}}),\\
\sigma^5&=((\epsilon_{\lambda_1,y_1,\gamma_1,a,b})_2,\partial_{2} Q_{1\mathcal{P}})-
((\epsilon_{\lambda_1,y_1,\gamma_1,a,b})_1,\partial_{2} Q_{2\mathcal{P}}),\\
\sigma^6&=((\epsilon_{\lambda_1,y_1,\gamma_1,a,b})_2,\partial_{b_1} Q_{1\mathcal{P}})-
((\epsilon_{\lambda_1,y_1,\gamma_1,a,b})_1,\partial_{b_1} Q_{2\mathcal{P}}),\\
\sigma^7&=((\epsilon_{\lambda_1,y_1,\gamma_1,a,b})_2,\partial_{b_2} Q_{1\mathcal{P}})-
((\epsilon_{\lambda_1,y_1,\gamma_1,a,b})_1,\partial_{b_2} Q_{2\mathcal{P}}).
\end{align*}
Recall that $\rho=\rho_1+i\rho_2$ was defined in \eqref{mod-definition-rho}. Taking the partial derivatives at $(\lambda_1,y_1,\gamma_1,a,b_1,b_2)=(1,0,0,0,0,0)$ yields that
\begin{align*}
\frac{\partial\epsilon_{\lambda_1,y_1,\gamma_1,a,b}}{\partial\lambda_1}=\Lambda w, \frac{\partial\epsilon_{\lambda_1,y_1,\gamma_1,a,b}}{\partial y_{1,j}}=-\partial_{j} w,
\frac{\partial\epsilon_{\lambda_1,y_1,\gamma_1,a,b}}{\partial\gamma_1}=iw,\\
\frac{\partial\epsilon_{\lambda_1,y_1,\gamma_1,a,b}}{\partial a}=-\partial_aQ_{\mathcal{P}}|_{\mathcal{P}=(0,0)}=-iS_{1,0},\\
\frac{\partial\epsilon_{\lambda_1,y_1,\gamma_1,a,b}}{\partial b_j}=-\partial_{b_j}Q_{\mathcal{P}}|_{\mathcal{P}=(0,0)}=-iS_{0,1,j},
\end{align*}
where we recall that $L_{-}S_{1,0}=\Lambda Q$ and $L_{-}S_{0,1}=-\nabla Q$. Note that $S_{1,0}$ is an radial function, whereas $S_{0,1}$ is antisymmetry. At $(\lambda_1,y_1,\gamma_1,a,b_1,b_2,w)=(1,0,0,0,0,0,0,Q)$, the Jacobian of the map $\sigma$ is hence given by
\begin{align*}
\frac{\partial \sigma^1}{\partial\lambda_1}=0,\ \frac{\partial\sigma^1}{\partial y_{1,1}}=0,\ \frac{\partial\sigma^1}{\partial y_{1,2}}=0,\ \frac{\partial \sigma^1}{\partial\gamma_1}=0,\ \frac{\partial \sigma^1}{\partial a}=-(S_{1,0},L_{-}S_{1,0}),\ \frac{\partial \sigma^1}{\partial b_1}=0,\ \frac{\partial \sigma^1}{\partial b_2}=0,\\
\frac{\partial \sigma^2}{\partial\lambda_1}=-(S_{1,0},L_{-}S_{1,0}),\ \frac{\partial\sigma^2}{\partial y_{1,1}}=0,\ \frac{\partial\sigma^2}{\partial y_{1,2}}=0,\ \frac{\partial \sigma^2}{\partial\gamma_1}=0,\ \frac{\partial \sigma^2}{\partial a}=0,\ \frac{\partial \sigma^2}{\partial b_1}=0,\ \frac{\partial \sigma^2}{\partial b_2}=0,\\
\frac{\partial \sigma^3}{\partial\lambda_1}=0,\ \frac{\partial\sigma^3}{\partial y_{1,1}}=0,\ \frac{\partial\sigma^3}{\partial y_{1,2}}=0,\ \frac{\partial \sigma^3}{\partial\gamma_1}=-(Q,\rho_1),\ \frac{\partial \sigma^3}{\partial a}=0,\ \frac{\partial \sigma^3}{\partial b_1}=0,\ \frac{\partial \sigma^3}{\partial b_2}=0,\\
\frac{\partial \sigma^4}{\partial\lambda_1}=0,\ \frac{\partial\sigma^4}{\partial y_{1,1}}=0,\ \frac{\partial\sigma^4}{\partial y_{1,2}}=0,\ \frac{\partial \sigma^4}{\partial\gamma_1}=0,\ \frac{\partial \sigma^4}{\partial a}=0,\ \frac{\partial \sigma^4}{\partial b_1}=-(L_{-}S_{0,1,1},S_{0,1,1}),\ \frac{\partial \sigma^4}{\partial b_2}=0,\\
\frac{\partial \sigma^5}{\partial\lambda_1}=0,\ \frac{\partial\sigma^5}{\partial y_{1,1}}=0,\ \frac{\partial\sigma^5}{\partial y_{1,2}}=0,\ \frac{\partial \sigma^5}{\partial\gamma_1}=0,\ \frac{\partial \sigma^5}{\partial a}=0,\ \frac{\partial \sigma^5}{\partial b_1}=0,\ \frac{\partial \sigma^5}{\partial b_2}=-(L_{-}S_{0,1,2},S_{0,1,2}),\\
\frac{\partial \sigma^6}{\partial\lambda_1}=0,\ \frac{\partial\sigma^6}{\partial y_{1,1}}=(L_{-}S_{0,1,1},S_{0,1,1}),\ \frac{\partial\sigma^6}{\partial y_{1,2}}=0,\  \frac{\partial \sigma^6}{\partial\gamma_1}=0,\ \frac{\partial \sigma^6}{\partial a}=0,\ \frac{\partial \sigma^6}{\partial b_1}=0,\ \frac{\partial \sigma^6}{\partial b_2}=0,\\
\frac{\partial \sigma^7}{\partial\lambda_1}=0,\ \frac{\partial\sigma^7}{\partial y_{1,1}}=0,\ \frac{\partial\sigma^7}{\partial y_{1,2}}=(L_{-}S_{0,1,2},S_{0,1,2}),\  \frac{\partial \sigma^7}{\partial\gamma_1}=0,\ \frac{\partial \sigma^7}{\partial a}=0,\ \frac{\partial \sigma^7}{\partial b_1}=0,\ \frac{\partial \sigma^7}{\partial b_2}=0.
\end{align*}
Note that we used here that $Q$ and $S_{1,0}$ are the radial symmetry functions, whereas $S_{0,1}$ is antisymmetry, for example $(Q,S_{0,1})=0$. Moreover, we note
\begin{align}\notag
-(Q,\rho_1)=(L_{+}\Lambda Q,\rho_1)=-(\Lambda Q,L_{+}\rho_1)=-(\Lambda Q,S_{1,0})=-(L_{-}S_{1,0},S_{1,0}).
\end{align}
Since $(L_{-}S_{1,0},S_{1,0})>0$ and $(L_{-}S_{0,1,j},S_{0,1,j})>0$, hence the determinant of the functional matrix is nonzero. By the implicit function theorem, we obtain existence and uniqueness for $(\lambda_1,y_1,\gamma_1,a,b_1,b_2,w)$ in some neighborhood around $(1,0,0,0,0,0,0,Q)$.

\subsection{Estimates for the modulation equations}
To conclude this section, we collect some estimates needed in the discussion of the modulation equations in section \ref{section-mod-estimate}.
\begin{lemma}\label{lemma-app-mod-estimate}
The following estimates hold
\begin{align}\label{app-B-1}
&(M_{-}(\epsilon)-a\Lambda\epsilon_1+b\cdot\nabla\epsilon_1,\Lambda Q_{2\mathcal{P}})+(M_{+}(\epsilon)+a\Lambda\epsilon_2-b\cdot\nabla\epsilon_2,\Lambda Q_{1\mathcal{P}})\notag\\
=&-\Re(\epsilon,Q_{\mathcal{P}})
+\mathcal{O}(\mathcal{P}^2\|\epsilon\|_2),\\\label{app-B-2}
&(M_{-}(\epsilon)-a\Lambda\epsilon_1+b\cdot\nabla\epsilon_1,\partial_a Q_{2\mathcal{P}})+(M_{+}(\epsilon)+a\Lambda\epsilon_2-b\cdot\nabla\epsilon_2,
\partial_a Q_{1\mathcal{P}})
=\mathcal{O}(\mathcal{P}^2\|\epsilon\|_2),\\\label{app-B-3}
&(M_{-}(\epsilon)-a\Lambda\epsilon_1+b\cdot\nabla\epsilon_1,\rho_2)
+(M_{+}(\epsilon)+a\Lambda\epsilon_2-b\cdot\nabla\epsilon_2,\rho_1 )
=\mathcal{O}(\mathcal{P}^2\|\epsilon\|_2),\\\label{app-B-4}
&(M_{-}(\epsilon)-a\Lambda\epsilon_1+b\cdot\nabla\epsilon_1,\partial_j Q_{2\mathcal{P}})+(M_{+}(\epsilon)+a\Lambda\epsilon_2-b\cdot\nabla\epsilon_2,\partial_{j} Q_{1\mathcal{P}})
=\mathcal{O}(\mathcal{P}^2\|\epsilon\|_2),\\\label{app-B-5}
&(M_{-}(\epsilon)-a\Lambda\epsilon_1+b\cdot\nabla\epsilon_1,\partial_{b_j} Q_{2\mathcal{P}})+(M_{+}(\epsilon)+a\Lambda\epsilon_2-b\cdot\nabla\epsilon_2,\partial_{b_j} Q_{1\mathcal{P}})
=\mathcal{O}(\mathcal{P}^2\|\epsilon\|_2).
\end{align}
\end{lemma}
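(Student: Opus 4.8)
The plan is to prove the five identities \eqref{app-B-1}--\eqref{app-B-5} in one sweep, exploiting their common shape
\[
\mathcal{I}[\mathbf{g}]:=\big(M_-(\epsilon)-a\Lambda\epsilon_1+b\cdot\nabla\epsilon_1,\,g_2\big)+\big(M_+(\epsilon)+a\Lambda\epsilon_2-b\cdot\nabla\epsilon_2,\,g_1\big),
\]
where $\mathbf{g}=(g_1,g_2)$ runs over $\Lambda\mathbf{Q}_{\mathcal{P}}$, $\partial_a\mathbf{Q}_{\mathcal{P}}$, $\rho=(\rho_1,\rho_2)$, $\partial_j\mathbf{Q}_{\mathcal{P}}$ and $\partial_{b_j}\mathbf{Q}_{\mathcal{P}}$. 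The two structural inputs are: (a) from \eqref{mod-define-M1}--\eqref{mod-define-M2} together with $Q_{1\mathcal{P}}=Q+\mathcal{O}(|\mathcal{P}|^2)$ and $Q_{2\mathcal{P}}=aS_{1,0}+b\cdot S_{0,1}+\mathcal{O}(|\mathcal{P}|^2)$ one writes $M_+(\epsilon)=L_+\epsilon_1+W_{\mathcal{P}}\epsilon$, $M_-(\epsilon)=L_-\epsilon_2+W_{\mathcal{P}}'\epsilon$, where $W_{\mathcal{P}},W_{\mathcal{P}}'$ are smooth, rapidly decaying multiplication operators whose leading part is the off-diagonal coupling by $-(aS_{1,0}+b\cdot S_{0,1})$ (size $\mathcal{O}(|\mathcal{P}|)$) plus an $\mathcal{O}(|\mathcal{P}|^2)$ remainder; (b) the test functions admit the expansions recorded just before Step~1 of the proof of Lemma~\ref{lemma-mod-2} and, for $\rho$, the definition \eqref{mod-definition-rho}, with all error terms controlled in $H^1$ and pointwise by $\langle x\rangle^{-3}$ through Lemma~\ref{lemma-3app} and Remark~\ref{remark:1}.

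First I would discard every genuinely quadratic contribution: a product of an $\mathcal{O}(|\mathcal{P}|^2)$ test-function error with $\epsilon$, or a product of two $\mathcal{O}(|\mathcal{P}|)$ pieces with $\epsilon$, is $\mathcal{O}(|\mathcal{P}|^2\|\epsilon\|_2)$ by Cauchy--Schwarz, after using the skew-adjointness $(\Lambda f,g)=-(f,\Lambda g)$ and $(\nabla f,g)=-(f,\nabla g)$ to move the derivatives in the transport terms $a\Lambda\epsilon,\ b\cdot\nabla\epsilon$ onto the decaying test functions, so that only $\|\epsilon\|_2$ survives (the $\langle x\rangle^{-3}$ decay of the $R_{k,\ell}$ and their $\Lambda$-images from Lemma~\ref{lemma-3app} makes all such pairings finite). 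Next I would treat the $\mathcal{O}(1)$ part. For \eqref{app-B-1} it is $(L_+\epsilon_1,\Lambda Q)=(\epsilon_1,L_+\Lambda Q)=-(\epsilon_1,Q)$ by self-adjointness of $L_+$ and the scaling identity $L_+\Lambda Q=-Q$ (got by applying $\Lambda$ to $DQ+Q-Q^2=0$ and using $[\Lambda,D]=-D$); this is precisely the $\mathcal{O}(1)$ part of the claimed $-\Re(\epsilon,Q_{\mathcal{P}})$. For \eqref{app-B-2}, \eqref{app-B-4}, \eqref{app-B-5} the $\mathcal{O}(1)$ term is of the form $(\epsilon_i,L_\pm h)$ with $h\in\{S_{1,0},\nabla Q,S_{0,1,j}\}$ against a function of the opposite parity, and whenever $h\in\ker L$ (e.g.\ $(L_+\epsilon_1,\partial_jQ)=(\epsilon_1,L_+\partial_jQ)=0$) it vanishes outright; for \eqref{app-B-3} it is handled with $L_+\rho_1=S_{1,0}$.

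The heart of the argument is the $\mathcal{O}(|\mathcal{P}|)$-order terms. Here one transfers $L_\pm$ onto the order-$\mathcal{P}$ test-function pieces using the defining relations $L_-S_{1,0}=\Lambda Q$, $L_-S_{0,1}=-\nabla Q$, $L_+T_{2,0}=\tfrac12 S_{1,0}+\Lambda S_{1,0}-\tfrac12 S_{1,0}^2$, $L_+\rho_1=S_{1,0}$ from Lemma~\ref{lemma-3app} and \eqref{mod-definition-rho}, combined with the commutators $[\Lambda,D]=-D$, $[\Lambda,\nabla]=-\nabla$, $[\nabla,L_-]=-\nabla Q$ and the analogous $[\Lambda,L_\pm]$; one then trades the surviving $(\epsilon_1,\cdot)\leftrightarrow(\epsilon_2,\cdot)$ pairings against each other via the orthogonality conditions \eqref{mod-orthogonality-condition}. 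The two decisive cancellations are: (i) the off-diagonal part of $W_{\mathcal{P}},W_{\mathcal{P}}'$ paired against the order-$1$ test functions is exactly annihilated by the transport terms $a\Lambda\epsilon,\ b\cdot\nabla\epsilon$ after integration by parts (this is why \eqref{mod-define-M1}--\eqref{mod-define-M2} carry those particular coefficients); and (ii) the equation \eqref{mod-definition-rho} for $\rho_2$ is designed so that the $\mathcal{O}(|\mathcal{P}|)$-contributions to \eqref{app-B-3} telescope to zero, exactly as in \cite[Appendix]{GL2022CPDE}. Parity bookkeeping threads through all of this: $Q,S_{1,0},T_{2,0},\rho_1$ are radial, $S_{0,1,j},T_{1,1,j},T_{0,2,j}$ are odd (Remark~\ref{remark:1}), and $D,\Lambda$ preserve parity while $\nabla,\partial_{y_j},\partial_{b_j}$ reverse it, so a large number of the remaining inner products vanish identically; for \eqref{app-B-1} the $\mathcal{O}(|\mathcal{P}|)$ terms do not vanish but collect to $-(\epsilon_2,aS_{1,0}+b\cdot S_{0,1})$, which is the $\mathcal{O}(|\mathcal{P}|)$ part of $-\Re(\epsilon,Q_{\mathcal{P}})$.

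The main obstacle is organizing the $\mathcal{O}(|\mathcal{P}|)$-order cancellations once the boost $b$ is present: in the radial setting of \cite{GL2022CPDE} only the $a$-linear pieces appear, whereas here one must check in addition that every new $b$-linear contribution — those built from $S_{0,1}$, $T_{1,1}$ and from the $b$-part of the off-diagonal coupling in $M_\pm$ — is killed, either by the $a\Lambda\epsilon/b\cdot\nabla\epsilon$ cancellation, by $[\nabla,L_-]=-\nabla Q$ together with $L_-S_{0,1}=-\nabla Q$, or by parity (an odd function paired against $Q$ or another even function). Each of these $b$-linear identities is a bounded, finite computation that mirrors its $a$-linear counterpart already carried out in \cite{GL2022CPDE}; once they are verified, \eqref{app-B-1}--\eqref{app-B-5} follow.
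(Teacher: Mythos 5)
Your proposal is correct and takes essentially the same route as the paper: expand $M_\pm=L_\pm+\text{off-diagonal}+\mathcal{O}(|\mathcal{P}|^2)$, expand the test functions $\Lambda\mathbf{Q}_{\mathcal{P}},\partial_a\mathbf{Q}_{\mathcal{P}},\rho,\partial_j\mathbf{Q}_{\mathcal{P}},\partial_{b_j}\mathbf{Q}_{\mathcal{P}}$ to first order in $\mathcal{P}$, move $L_\pm$ onto them by self-adjointness, use $L_+\Lambda Q=-Q$, $L_+\nabla Q=0$, $L_-S_{1,0}=\Lambda Q$, $L_-S_{0,1}=-\nabla Q$, $L_+\rho_1=S_{1,0}$ together with the commutator identities $[\Lambda,\nabla]=-\nabla$ and $[\nabla,L_-]=-\nabla Q$ (which the paper encodes in the identities $L_-\Lambda S_{1,0}=-S_{1,0}+\Lambda Q+(\Lambda Q)S_{1,0}+\Lambda^2 Q$ and $L_-\Lambda S_{0,1}=-S_{0,1}-\nabla Q+(\Lambda Q)S_{0,1}-\Lambda\nabla Q$), and kill the residual $\mathcal{O}(|\mathcal{P}|)\cdot\|\epsilon\|_2$ pairings via the orthogonality conditions \eqref{mod-orthogonality-condition} and parity. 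One small imprecision worth flagging: your claim (i) that the off-diagonal coupling in $M_\pm$ is \emph{directly} annihilated by the transport terms $a\Lambda\epsilon,\ b\cdot\nabla\epsilon$ after integration by parts is not quite how the cancellation is organized in the paper; it is rather a three-way interplay among the transport terms, the off-diagonal coupling, and the extra pieces coming from the commutator expansion of $L_-\Lambda S_{1,0}$ and $L_-\Lambda S_{0,1}$ (e.g.\ the $(\Lambda Q)S_{1,0}$ and $\Lambda^2 Q$ terms) that collectively produces the needed cancellations, with the leftovers $a(\epsilon_2,\Lambda Q)$ and $b\cdot(\epsilon_2,\nabla Q)$ absorbed by orthogonality. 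This is a matter of how the bookkeeping is carved up, not a defect of the argument.
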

\begin{proof}
First, we recall \eqref{mod-define-M1} and \eqref{mod-define-M2} that
\begin{align*}
M_{+}(\epsilon)=&L_{+}\epsilon_1-|Q_{\mathcal{P}}|^{-1}
Q_{1\mathcal{P}}Q_{2\mathcal{P}}\epsilon_2+\mathcal{O}(\mathcal{P}\epsilon),\\
M_{-}(\epsilon)=&L_{-}\epsilon_2-|Q_{\mathcal{P}}|^{-1}
Q_{1\mathcal{P}}Q_{2\mathcal{P}}\epsilon_1+\mathcal{O}(\mathcal{P}\epsilon).
\end{align*}
We have notice the identity
\begin{align}\label{app-B-6}
L_{-}\Lambda S_{1,0}=-S_{1,0}+2(\Lambda Q)QS_{1,0}+\Lambda Q+\Lambda^2Q.
\end{align}
To see this relation, we recall that $L_{-}S_{1,0}=\Lambda Q$ and hence
\begin{align*}
L_{-}\Lambda S_{1,0}&=[L_{-},\Lambda]S_{1,0}+\Lambda L_{-}S_{1,0}=DS_{1,0}+(y\cdot\nabla Q)S_{1,0}+\Lambda^2Q\\
&=-S_{1,0}+|Q|S_{1,0}+\Lambda Q+(y\cdot\nabla Q)S_{1,0}+\Lambda^2Q\\
&=-S_{1,0}+\Lambda Q+(\Lambda Q)S_{1,0}+\Lambda^2Q.
\end{align*}
Hence, \eqref{app-B-6} holds. Similarly, we deduce from $L_{-}S_{0,1}=-\nabla Q$ that
\begin{align}\label{app-B-7}
L_{-}\Lambda S_{0,1}=-S_{0,1}-\nabla Q+(\Lambda Q)S_{0,1}-\Lambda\nabla Q.
\end{align}
Next, we recall that
\begin{align}\notag
\Lambda Q_{1\mathcal{P}}=\Lambda Q+\mathcal{O}(\mathcal{P}^2),\ \Lambda Q_{2\mathcal{P}}=a\Lambda S_{1,0}+b\cdot\Lambda S_{0,1}+\mathcal{O}(\mathcal{P}^2).
\end{align}
Combining \eqref{app-B-6} and \eqref{app-B-7} with the fact and using that $L_{+}\Lambda Q=-Q$, we find that
\begin{align*}
&\text{left-hand side of \eqref{app-B-1}}\\
=&a(\epsilon_2,L_{-}\Lambda S_{1,0})+(\epsilon_2, b\cdot L_{-}\Lambda S_{0,1})-
|Q_{\mathcal{P}}|^{-1}
(Q_{1\mathcal{P}}Q_{2\mathcal{P}}\epsilon_1,a\Lambda S_{1,0}+b\cdot\Lambda S_{0,1})\\
&+(-a\Lambda\epsilon_1+b\cdot\Lambda\nabla\epsilon_1,a\Lambda S_{1,0}+b\cdot\Lambda S_{0,1})+(\epsilon_1,L_{+}\Lambda Q)+(a\Lambda\epsilon_2-b\cdot\nabla\epsilon_2,\Lambda Q)\\&-|Q_{\mathcal{P}}|^{-1}
(Q_{1\mathcal{P}}Q_{2\mathcal{P}}\epsilon_2,\Lambda Q)+\mathcal{O}(\mathcal{P}^2\|\epsilon\|_2)\\
=&-(\epsilon_1, Q)-a(\epsilon_2,S_{1,0})-(\epsilon_2,b\cdot S_{0,1})+a(\epsilon_2,\Lambda Q)
-(\epsilon_2,b\cdot\nabla Q)+\mathcal{O}(\mathcal{P}^2\|\epsilon\|_2)\\
=&-\Re(\epsilon, Q_{\mathcal{P}})+\mathcal{O}(\mathcal{P}^2\|\epsilon\|_2).
\end{align*}
Here we used that $a(\epsilon_2,\Lambda Q)=\mathcal{O}(\mathcal{P}^2\|\epsilon\|_2)$ and $(\epsilon_2,b\cdot\nabla Q)=\mathcal{O}(\mathcal{P}^2\|\epsilon\|_2)$, which follows from the orthogonality condition \eqref{mod-orthogonality-condition}.

$\mathbf{Estimate~\eqref{app-B-2}}$. From Lemma \ref{lemma-3app-2} we recall that
\begin{align*}
\partial_aQ_{1\mathcal{P}}=2aT_{2,0}+b\cdot T_{1,1}+\mathcal{O}(\mathcal{P}^2),\ \partial_aQ_{2\mathcal{P}}=S_{1,0}+\mathcal{O}(\mathcal{P}^2),
\end{align*}
where
\begin{align*}
L_{+}T_{2,0}=\frac{1}{2}S_{1,0}-\Lambda S_{1,0}+\frac{1}{2}|S_{1,0}|^2,\
L_{+}T_{1,1}=S_{0,1}-\Lambda S_{0,1}+\nabla S_{1,0}+S_{1,0}S_{0,1}.
\end{align*}
Using this fact, we have
\begin{align*}
&\text{left-hand side of \eqref{app-B-2}}\\
=&(\epsilon_2,L_{-}S_{1,0})-|Q_{\mathcal{P}}|^{-1}
(Q_{1\mathcal{P}}Q_{2\mathcal{P}}\epsilon_1,S_{1,0})+a(\epsilon_1,\Lambda S_{1,0})\\
&-(\epsilon_1,b\cdot\nabla S_{1,0})+2a(\epsilon_1,L_{+}T_{2,0})+(\epsilon_1,b\cdot L_{+}T_{1,1})+
\mathcal{O}(\mathcal{P}^2\|\epsilon\|_2)\\
=&(\epsilon_2,\Lambda Q)-|Q_{\mathcal{P}}|^{-1}
((aQS_{1,0}+b\cdot QS_{0,1})\epsilon_1,S_{1,0})+a(\epsilon_1,\Lambda S_{1,0})\\
&-(\epsilon_1,b\cdot\nabla S_{1,0})+2a\left(\epsilon_1,\frac{1}{2}S_{1,0}-\Lambda S_{1,0}+\frac{1}{2}|S_{1,0}|^2\right)\\
&+(\epsilon_1,b\cdot S_{0,1}-b\cdot\Lambda S_{0,1})+\nabla S_{1,0}+b\cdot S_{1,0}S_{0,1})+
\mathcal{O}(\mathcal{P}^2\|\epsilon\|_2)\\
=&(\epsilon_2,\Lambda Q)-a(\epsilon_1,\Lambda S_{1,0})-(\epsilon_1,b\cdot\Lambda S_{0,1})+(\epsilon_1,b\cdot S_{0,1})+\mathcal{O}(\mathcal{P}^2\|\epsilon\|_2)\\
=&(\epsilon_2,\Lambda Q_{1\mathcal{P}})-(\epsilon_1,\Lambda Q_{2\mathcal{P}})+
\mathcal{O}(\mathcal{P}^2\|\epsilon\|_2),
\end{align*}
where in the last step we also used that $(\epsilon_1,b\cdot S_{0,1})=\mathcal{O}(\mathcal{P}^2\|\epsilon\|_2)$, thanks to the orthogonality condition \eqref{mod-orthogonality-condition}.

$\mathbf{Estimate~\eqref{app-B-3}}$. Indeed, by the definition of $\rho=\rho_1+i\rho_2$, we have
\begin{align*}
&\text{left-hand side of \eqref{app-B-3}}\\
=&(\epsilon_2,L_{-}\rho_2)+(\epsilon_1,L_{+}\rho_1)-
|Q_{\mathcal{P}}|^{-1}((aQS_{1,0}+b\cdot QS_{0,1})\epsilon_2,\rho_1)\\
&-a(\epsilon_2,\Lambda\rho_1)+(\epsilon_2,b\cdot\nabla\rho_1)
+\mathcal{O}(\mathcal{P}^2\|\epsilon\|_2)\\
=&a(\epsilon_2,S_{1,0}\rho_1)+a(\epsilon_2,\Lambda\rho_1)
-2a(\epsilon_2,T_{2,0})+(\epsilon_2,b\cdot S_{0,1}\rho_1)\\
&-(\epsilon_2,b\cdot\nabla\rho_1)-(\epsilon_2,b\cdot T_{1,1})+(\epsilon_1,S_{1,0})-
|Q_{\mathcal{P}}|^{-1}((aQS_{1,0}+b\cdot QS_{0,1})\epsilon_2,\rho_1)\\
&-a(\epsilon_2,\Lambda\rho_1)+(\epsilon_2,b\cdot\nabla\rho_1)
+\mathcal{O}(\mathcal{P}^2\|\epsilon\|_2)\\
=&-2a(\epsilon_2,T_{2,0})-(\epsilon_2,b\cdot T_{1,1})+(\epsilon_1,S_{1,0})
+\mathcal{O}(\mathcal{P}^2\|\epsilon\|_2)\\
=&-(\epsilon_2,\partial_aQ_{1\mathcal{P}})+(\epsilon_1,\partial_aQ_{2\mathcal{P}})
+\mathcal{O}(\mathcal{P}^2\|\epsilon\|_2)\\
=&\mathcal{O}(\mathcal{P}^2\|\epsilon\|_2),
\end{align*}
where we use the orthogonality condition \eqref{mod-orthogonality-condition}.

$\mathbf{Estimate~\eqref{app-B-4}}$. First, we note that
\begin{align*}
\nabla Q_{1\mathcal{P}}=\nabla Q+\mathcal{O}(\mathcal{P}^2),\
\nabla Q_{2\mathcal{P}}=a\nabla S_{1,0}+\sum_{j=1}^2b_j\nabla S_{0,1,j}+\mathcal{O}(\mathcal{P}^2).
\end{align*}
Moreover, we have the relation
\begin{align*}
L_{+}\nabla Q&=0,\ L_{-}\nabla S_{1,0}=(\nabla Q)S_{1,0}+\nabla\Lambda Q,\\
L_{-}\nabla S_{0,1}&=(\nabla Q)\cdot S_{0,1}-\nabla \cdot\nabla Q.
\end{align*}
Indeed, since $[D,\nabla]=0$, we have
\begin{align*}
L_{+}\nabla Q=(D+1-2Q)\nabla Q=\nabla(DQ+Q-Q^{2})=0.
\end{align*}
Due to the fact that $[L_{-},\nabla]=[Q,\nabla]$, we obtain
\begin{align*}
L_{-}\nabla S_{1,0}=[L_{-},\nabla]S_{1,0}+\nabla L_{-}S_{1,0}=(\nabla Q)S_{1,0}+\nabla\Lambda Q,
\end{align*}
where we use $L_{-}S_{1,0}=\Lambda Q$. Similarly, we can obtain $L_{-}\nabla S_{0,1,j}=(\nabla Q) S_{0,1,j}-\nabla(\partial_{j} Q),\,j=1,2$. Thus, we deduce
\begin{align*}
&\text{left-hand side of \eqref{app-B-4}}\\
=&a(\epsilon_2,L_{-}\partial_j S_{1,0})+b_j(\epsilon_2,L_{-}\partial_j S_{0,1,j})+(\epsilon_1,L_{+}\partial_j Q)-a(\epsilon_2QS_{1,0},\partial_j Q)\\
&-b_j(\epsilon_2QS_{0,1,j},\partial_j Q)-a(\epsilon_2,\Lambda\partial_j Q)
+b_j(\epsilon_2,\partial_j(\partial_{j}Q))+\mathcal{O}(\mathcal{P}^2\|\epsilon\|_2)\\
=&a(\epsilon_2,\partial_j QS_{1,0}+\partial_j\Lambda Q)
+b_j(\epsilon_2,\partial_j QS_{0,1,j}-\partial_j(\partial_{j}Q))\\
&-a(\epsilon_2QS_{1,0},\partial_j Q)-b_j(\epsilon_2QS_{0,1,j},\partial_j Q)
-a(\epsilon_2,\Lambda\partial_j Q)\\
&+b_j(\epsilon_2,\partial_j(\partial_{j}Q))+\mathcal{O}(\mathcal{P}^2\|\epsilon\|_2)\\
=&a(\epsilon_2,[\partial_j,\Lambda]Q)+\mathcal{O}(\mathcal{P}^2\|\epsilon\|_2)\\
=&a(\epsilon_2,\partial_j Q)+\mathcal{O}(\mathcal{P}^2\|\epsilon\|_2)=
\mathcal{O}(\mathcal{P}^2\|\epsilon\|_2),
\end{align*}
since $a(\epsilon_2,\partial_jQ)=\mathcal{O}(\mathcal{P}^2\|\epsilon\|_2)$ due to the orthogonality condition \eqref{mod-orthogonality-condition}.

$\mathbf{Estimate~\eqref{app-B-5}}$. We note that
\begin{align*}
\partial_{b_j}Q_{1\mathcal{P}}=aT_{1,1,j}+2b_jT_{0,2,j}+\mathcal{O}(\mathcal{P}^2),\ \partial_{b_j}Q_{2\mathcal{P}}=S_{0,1,j}+\mathcal{O}(\mathcal{P}^2),
\end{align*}
where
\begin{align*}
L_{+}T_{0,2,j}=\partial_{x_j} S_{0,1,j}+\frac{1}{2}|S_{0,1,j}|^2.
\end{align*}
Using the above relations, we obtain that
\begin{align*}
&\text{left-hand side of \eqref{app-B-5}}\\
=&(\epsilon_2,L_{-}S_{0,1,j})-a(\epsilon_1QS_{1,0},S_{0,1,j})
-b_j(\epsilon_1S_{0,1,j}Q,S_{0,1})+a(\epsilon_1,\Lambda S_{0,1,j})\\
&-b_j(\epsilon_1,\partial_{j} S_{0,1,j})+a(\epsilon_1,L_{+}T_{1,1,j})+2b_j(\epsilon_2,L_{+}T_{0,2,j})
+\mathcal{O}(\mathcal{P}^2\|\epsilon\|_2)\\
=&-(\epsilon_2,\partial_{j} Q)-a(\epsilon_1QS_{1,0},S_{0,1,j})
-b_j(\epsilon_1S_{0,1,j}Q,S_{0,1,j})+a(\epsilon_1,\Lambda S_{0,1,j})\\
&-b_j(\epsilon_1,\partial_{j} S_{0,1,j})
+a\left(\epsilon_1,S_{0,1,j}-\Lambda S_{0,1,j}+\nabla S_{1,0}+S_{0,1,j}S_{1,0}\right)\\
&+2b_j\left(\epsilon_1,\partial_{j} S_{0,1,j}+\frac{1}{2}|S_{0,1,j}|^2\right)
+\mathcal{O}(\mathcal{P}^2\|\epsilon\|_2)\\
=&-(\epsilon_2,\partial_{j} Q)+a(\epsilon_1,\partial_{j} S_{1,0})+b_j(\epsilon_1,\partial_{j} S_{0,1,j})
+\mathcal{O}(\mathcal{P}^2\|\epsilon\|_2)\\
=&-(\epsilon_2,\partial_{j} Q_{1\mathcal{P}})+(\epsilon_1,\partial_{j} Q_{2\mathcal{P}})
+\mathcal{O}(\mathcal{P}^2\|\epsilon\|_2)\\
=&\mathcal{O}(\mathcal{P}^2\|\epsilon\|_2)
\end{align*}
where in the last step we use the orthogonality condition \eqref{mod-orthogonality-condition} and hence we proven  this Lemma.
\end{proof}

\subsection{Coercivity}

\begin{lemma}\label{lemma-app-coercivity-estimate}
There exist a constant $C_1>0$ such that for all $\epsilon=\epsilon_1+i\epsilon_2\in H^{1/2}(\mathbb{R}^2)$, we have the coercivity estimate
\begin{align}\notag
(L_{+}\epsilon_1,\epsilon_1)+(L_{-}\epsilon_2,\epsilon_2)\geq C_1\int|\epsilon|^2-\frac{1}{C_1}\left\{(\epsilon_1,Q)^2+(\epsilon_1,S_{1,0})^2
+|(\epsilon_1,S_{0,1})|^2+|(\epsilon_2,\rho_1)|^2\right\}.
\end{align}
Here $S_{1,0}$ and $S_{0,1}$ are the unique functions such that $L_{-}S_{1,0}=\Lambda Q$ with $(S_{1,0},Q)=0$ and $L_{-}S_{0,1}=-\nabla Q$ with $(S_{0,1},Q)=0$, respectively, and the function $\rho_1$ is defined in \eqref{mod-definition-rho}.
\end{lemma}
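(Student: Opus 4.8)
The plan is to bound the two quadratic forms $(L_+\epsilon_1,\epsilon_1)$ and $(L_-\epsilon_2,\epsilon_2)$ separately, each by an orthogonal decomposition that peels off the directions destroying coercivity, and then to recover the coefficients of those directions from the four scalar quantities on the right‑hand side; the $H^{1/2}$‑regularity of $\epsilon$ enters only to make all inner products and the quadratic forms (which contain $\|D^{1/2}\cdot\|_2^2$) finite, so one may argue for Schwartz $\epsilon$ and conclude by density. The $L_-$‑contribution is the easy one: from \cite{FLS2016CPAM}, $L_-=D+1-Q\ge0$, $\ker L_-=\mathrm{span}\{Q\}$ and $0$ is an isolated simple eigenvalue (the essential spectrum of $L_\pm$ being $[1,\infty)$), so there is $\delta_0>0$ with $(L_-h,h)\ge\delta_0\|h\|_2^2$ for all $h\perp Q$. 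Writing $\epsilon_2=\mu Q+h$ with $h\perp Q$ gives $(L_-\epsilon_2,\epsilon_2)=(L_-h,h)\ge\delta_0\|h\|_2^2$, and since $(\epsilon_2,\rho_1)=\mu(Q,\rho_1)+(h,\rho_1)$ with $(Q,\rho_1)=(L_-S_{1,0},S_{1,0})>0$ (as computed in Section~\ref{section-app-mod-1}), we get $\mu^2\lesssim(\epsilon_2,\rho_1)^2+\|h\|_2^2$, hence $(L_-\epsilon_2,\epsilon_2)\gtrsim\|\epsilon_2\|_2^2-C(\epsilon_2,\rho_1)^2$.

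For $L_+=D+1-2Q$ the heart of the matter is a spectral sub‑lemma: there is $c_0>0$ such that $(L_+g,g)\ge c_0\|g\|_2^2$ whenever $g\perp Q$, $g\perp\partial_1Q$, $g\perp\partial_2Q$ and $g\perp S_{1,0}$. The four conditions remove, in order, the unique negative eigenvalue of $L_+$ — which is where mass‑criticality enters, via $(L_+^{-1}Q,Q)=-(\Lambda Q,Q)=0$, a consequence of $L_+\Lambda Q=-Q$ and $\int Q\,\Lambda Q=0$ — the two‑dimensional kernel $\mathrm{span}\{\partial_1Q,\partial_2Q\}$, and the residual degeneracy of the form $(L_+\cdot,\cdot)$ on $\{Q\}^\perp$, whose null space equals $\mathrm{span}\{\Lambda Q,\partial_1Q,\partial_2Q\}$ and which is killed by $g\perp S_{1,0}$ because $(\Lambda Q,S_{1,0})=(L_-S_{1,0},S_{1,0})>0$ while $(\partial_jQ,\Lambda Q)=(\partial_jQ,S_{1,0})=0$. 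I would prove this by first identifying the null space through a Lagrange‑multiplier computation on $\{Q\}^\perp$, and then running the standard weak‑compactness argument: a minimizing sequence $g_n$ with $\|g_n\|_2=1$ and $(L_+g_n,g_n)\to0$ is bounded in $H^{1/2}$; its weak limit $g_*$ is nonzero (otherwise $(L_+g_n,g_n)\ge\|g_n\|_2^2-2\int Qg_n^2\to1$), and by weak lower semicontinuity of $\|D^{1/2}\cdot\|_2^2+\|\cdot\|_2^2$ and weak continuity of $g\mapsto\int Qg^2$ we have $(L_+g_*,g_*)\le0$; as $g_*$ inherits the orthogonality constraints and $(L_+\cdot,\cdot)\ge0$ on $\{Q\}^\perp$, it must lie in $\mathrm{span}\{\Lambda Q,\partial_1Q,\partial_2Q\}\cap\{Q,\partial_1Q,\partial_2Q,S_{1,0}\}^\perp=\{0\}$, a contradiction.

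Granting the sub‑lemma, I would use the orthogonal decomposition
\[
\epsilon_1=\alpha Q+\beta_1\partial_1Q+\beta_2\partial_2Q+\delta S_{1,0}+g,
\]
valid because $Q,\partial_1Q,\partial_2Q,S_{1,0}$ are pairwise orthogonal (here one uses $\int\partial_jQ\,Q=\int\partial_jQ\,S_{1,0}=\int\partial_1Q\,\partial_2Q=\int Q\,S_{1,0}=0$). Since $\partial_1Q,\partial_2Q\in\ker L_+$, no $\beta_j$‑dependent term survives in $(L_+\epsilon_1,\epsilon_1)$; using $L_+Q=-Q^2$, $L_+S_{1,0}=\Lambda Q-QS_{1,0}$, the sub‑lemma, and Young's inequality one gets $(L_+\epsilon_1,\epsilon_1)\ge\frac{c_0}{2}\|g\|_2^2-C(\alpha^2+\delta^2)$. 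Since $\alpha=(\epsilon_1,Q)/\|Q\|_2^2$ and $\delta=(\epsilon_1,S_{1,0})/\|S_{1,0}\|_2^2$, it follows that $\|g\|_2^2\lesssim(L_+\epsilon_1,\epsilon_1)+(\epsilon_1,Q)^2+(\epsilon_1,S_{1,0})^2$ — with no circular dependence on the $\beta_j$ — and from $(\epsilon_1,S_{0,1,j})=\beta_j(\partial_jQ,S_{0,1,j})+(g,S_{0,1,j})$, where $(\partial_jQ,S_{0,1,j})=-(L_-S_{0,1,j},S_{0,1,j})\ne0$ while $(\partial_kQ,S_{0,1,j})=(Q,S_{0,1,j})=(S_{1,0},S_{0,1,j})=0$ for $k\ne j$ by the parity of $S_{0,1,j}$, one gets $\beta_j^2\lesssim(\epsilon_1,S_{0,1,j})^2+\|g\|_2^2$. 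Adding the squares of the decomposition yields $\|\epsilon_1\|_2^2\lesssim(L_+\epsilon_1,\epsilon_1)+(\epsilon_1,Q)^2+(\epsilon_1,S_{1,0})^2+|(\epsilon_1,S_{0,1})|^2$; combining this with the $L_-$‑estimate and taking $C_1$ large enough completes the proof.

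I expect the sub‑lemma to be the main obstacle: relative to the radial analysis of \cite{GL2022CPDE} one must incorporate the two extra translation directions $\partial_1Q,\partial_2Q$, check that the null space of the degenerate form on $\{Q\}^\perp$ remains exactly $\mathrm{span}\{\Lambda Q,\partial_1Q,\partial_2Q\}$, and verify its transversality to the new constraint space. The other delicate point is the ordering of the coefficient estimates in the last step: the $\|g\|_2^2$ that unavoidably appears in the $\beta_j$‑bounds must not feed back into the bound for $\|g\|_2^2$, and this is possible precisely because $\partial_1Q,\partial_2Q\in\ker L_+$.
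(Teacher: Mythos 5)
Your proposal is correct; note that the paper states this lemma without giving any proof (the subsection contains only the statement), so there is nothing in the text to compare line by line against — presumably the authors intend the reader to adapt the radial argument from \cite{GL2022CPDE}. Your argument is the standard route for such coercivity lemmas, and you fill in precisely the extra work needed in the non-radial setting: the kernel of $L_+$ now contains the two translation modes $\partial_1 Q,\partial_2 Q$, which you correctly absorb into the null space $\mathrm{span}\{\Lambda Q,\partial_1 Q,\partial_2 Q\}$ of the degenerate form on $\{Q\}^\perp$, and you verify transversality to the constraint set $\{Q,\partial_1 Q,\partial_2 Q,S_{1,0}\}^\perp$ via the nondegenerate (diagonal) pairing matrix built from $(\Lambda Q,S_{1,0})=(L_-S_{1,0},S_{1,0})>0$, $\|\partial_j Q\|_2^2>0$, and the vanishing off-diagonal entries forced by radiality/parity. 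Two points you handle well and that a careless reader could stumble over: first, the lemma's right-hand side carries $(\epsilon_1,S_{0,1,j})$ rather than $(\epsilon_1,\partial_j Q)$, and you correctly recover $\beta_j$ through $(\partial_j Q,S_{0,1,j})=-(L_-S_{0,1,j},S_{0,1,j})\ne 0$ together with the parity facts $(Q,S_{0,1,j})=(S_{1,0},S_{0,1,j})=(\partial_k Q,S_{0,1,j})=0$ for $k\ne j$; second, your observation that the $\beta_j$-directions drop out of $(L_+\epsilon_1,\epsilon_1)$ (because $\partial_j Q\in\ker L_+$) is exactly what prevents the $\|g\|_2^2$ term in the $\beta_j$-bound from feeding back circularly. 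The remaining ingredients — positivity of $L_-$ modulo $\mathrm{span}\{Q\}$, marginality $(L_+^{-1}Q,Q)=-(\Lambda Q,Q)=0$ of the mass-critical ground state, $(Q,\rho_1)=(L_-S_{1,0},S_{1,0})>0$ from the paper's Jacobian computation, and the concentration-compactness sub-lemma using decay of $Q$ and compactness of $H^{1/2}\hookrightarrow L^2_{\mathrm{loc}}$ — are all invoked correctly. The only cosmetic caveat is that you cite the identity $L_+\Lambda Q=-Q$ and the mass-critical relation $\int Q\,\Lambda Q=0$ without rederiving them, but both are standard and used freely throughout the paper.
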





\renewcommand{\proofname}{\bf Proof.}

\noindent
{\bf Acknowledgments}

 V. Georgiev was partially supported by   Gruppo Nazionale per l'Analisi Matematica 2020, by the project PRIN  2020XB3EFL with the Italian Ministry of Universities and Research, by Institute of Mathematics and Informatics, Bulgarian Academy of Sciences, by Top Global University Project, Waseda University and the Project PRA 2022 85 of University of Pisa.
 Y.Li was supported by China Postdoctoral Science Foundation (No. 2021M701365) and the funding of innovating activities in Science and Technology of Hubei Province.


\vspace*{.5cm}


\bigskip

\begin{flushleft}
Vladimir Georgiev,\\
Dipartimento di Matematica, Universit\`{a} di Pisa, Largo B. Pontecorvo 5, 56127 Pisa, Italy\\
 Faculty of Science and Engineering, Waseda University, 3-4-1, Okubo, Shinjuku-ku, Tokyo 169-8555, Japan\\
 IMICBAS, Acad. Georgi Bonchev Str., Block 8, 1113 Sofia, Bulgaria\\
E-mail: georgiev@dm.unipi.it
\end{flushleft}

\begin{flushleft}
Yuan Li,\\
School of Mathematics and Statistics, Central China Normal University, Wuhan, PR China\\
E-mail: yli2021@ccnu.edu.cn
\end{flushleft}

\bigskip

\medskip

\end{document}